\definecolor{mydarkblue}{rgb}{0,0.08,0.85}
\def\@seccntformat#1{\protect\makebox[0pt][r]{\csname the#1\endcsname\hspace{12pt}}}\makeatother
\newtheorem{theorem}{Theorem}[section]
\newtheorem{proposition}[theorem]{Proposition}
\newtheorem{definition}[theorem]{Definition}
\newtheorem{lemma}[theorem]{Lemma}
\newtheorem{corollary}[theorem]{Corollary}
\newtheorem{remark}[theorem]{Remark}
\newcommand{\zeros}{0}
\newcommand\mmm{\mkern-1mu}
\newcommand{\reals}{{\mathbb R}}
\DeclareMathOperator*{\argmin}{arg\,min}
\newcommand{\retr}{\mathrm{R}}
\newcommand{\grad}{\nabla}
\newcommand{\hess}{\nabla^2}
\newcommand{\stiefel}{\mathrm{St}}
\newcommand{\grassmann}{\mathrm{Gr}}
\newcommand{\tangent}{\mathrm{T}}
\newcommand{\normal}{\mathrm{N}}
\newcommand{\frob}{\mathrm{F}}
\newcommand{\opnorm}{\mathrm{op}}
\DeclareMathOperator{\diag}{diag}
\DeclareMathOperator{\rank}{rank}
\DeclareMathOperator{\proj}{proj}
\newcommand{\sym}{\mathrm{Sym}}
\newcommand{\sksym}{\mathrm{Skew}}
\DeclarePairedDelimiterX{\inner}[2]{\langle}{\rangle}{#1, #2}
\DeclarePairedDelimiterX{\sinner}[3]{#3\langle}{#3\rangle}{#1, #2}
\newcommand{\D}{\mathrm{D}}
\newcommand{\diff}{\mathop{}\!\mathrm{d}}
\newcommand{\smooth}[1]{{\mathrm{C}^{#1}}}
\newcommand{\sequence}[1]{\{#1\}}
\newcommand{\transpose}{^\top\mmm }
\newcommand{\TODOF}[1]{\@bsphack\@esphack}
\newcommand\restrold[2]{{
    \left.\kern-\nulldelimiterspace 
      #1 
    \right|_{#2} 
  }}
\newcommand{\normalscaling}{\bBigg@{0}}
\newcommand{\abitbig}{\bBigg@{1}}
\newcommand\restr[3]{{
    #3.\kern-\nulldelimiterspace 
    #1 
    #3|_{#2} 
  }}
\newcommand{\aref}[1]{\hyperref[#1]{A\ref{#1}}}
\newcommand{\cref}[1]{\hyperref[#1]{C\ref{#1}}}
\newcommand{\burermonteiro}{Burer--Monteiro}
\newcommand{\cauchyschwarz}{Cauchy--Schwarz}
\newcommand{\loja}{\L ojasiewicz}
\newcommand{\polyakloja}{Polyak--\loja}
\newcommand{\pl}{\ensuremath{\text{P\L}}}
\newcommand{\morsebott}{Morse--Bott}
\newcommand{\mb}{\ensuremath{\text{MB}}}
\newcommand\pig[1]{\scalerel*[5pt]{\big#1}{%
    \ensurestackMath{\addstackgap[1.5pt]{\big#1}}}}
\newcommand{\lift}{\varphi}
\newcommand{\manifold}{\mathcal{M}}
\newcommand{\nanifold}{\mathcal{N}}
\newcommand{\sanifold}{\mathcal{S}}
\newcommand{\boundedrank}{\mathbb{R}_{\leq r}^{m \times n}}
\newcommand{\fixedrank}{\mathbb{R}_r^{m \times n}}
\newcommand{\fixedranksub}{\lift^{-1}(\reals_r^{m \times n})}
\newcommand{\mfc}{g}
\newcommand{\sfc}{f}
\newcommand{\optimalset}{\mathcal{S}}
\newcommand{\optpoint}{\bar x}
\newcommand{\desing}{\mathcal{M}}
\newcommand{\sfactor}{S}
\newcommand{\embeddingspace}{\mathcal{E}}
\DeclareMathOperator{\sympart}{sym}
\newcommand{\plconstant}{\mu}
\newcommand{\qa}{\xi_0}
\newcommand{\qb}{\xi_1}
\newcommand{\qc}{\xi_2}
\newcommand{\qd}{\xi_3}
\newcommand{\projdiff}{\mathcal P}
\newcommand{\iacc}[1]{#1''}
\newcommand{\acc}[1]{\ddot #1}
\title{Optimization over bounded-rank matrices through a desingularization
  enables joint global and local guarantees}
\author{
  Quentin Rebjock and Nicolas Boumal\thanks{Ecole Polytechnique F\'ed\'erale de
    Lausanne (EPFL), Institute of Mathematics.
    This work was supported by the Swiss State Secretariat for Education,
    Research and Innovation (SERI) under contract number MB22.00027.}
}
\date{\today}
\numberwithin{equation}{section}
\begin{document}

\maketitle

\begin{abstract}
  Convergence guarantees for optimization over bounded-rank matrices are delicate to obtain
  because the feasible set is a nonsmooth and nonconvex algebraic variety.
  Existing techniques include direct optimization over bounded-rank matrices (e.g., projected gradient descent), fixed-rank optimization (over the maximal-rank stratum), and the LR parameterization.
  They all lack either global guarantees
  (the ability to accumulate only at stationary points)
  or fast local convergence
  (e.g., if the limit has non-maximal rank).
  We study a lifted geometry that allows algorithms to enjoy both.

  \cite{khrulkov2018desingularization} parameterize the bounded-rank variety via
  a desingularization to recast the optimization problem onto a smooth manifold.
  Building on their ideas, we develop a Riemannian geometry for this desingularization, also with care for numerical considerations.
  We use it to ensure conditions that, for many standard algorithms, yield global convergence to stationary points with fast local rates.
  On matrix completion tasks, we find that this approach is comparable to others.
\end{abstract}

\section{Introduction}\label{sec:intro}

We aim to minimize a continuously differentiable
function $\sfc \colon \reals^{m \times n} \to \reals$ over the set of
bounded-rank matrices:
\begin{align}\label{eq:problem}\tag{P}
  \min_{X \in \reals^{m \times n}} \sfc(X) && \text{subject to} && \rank X \leq r,
\end{align}
for some $r < \min(m, n)$.
This is a classical problem: see Section~\ref{sec:relatedwork} for a literature review.

The feasible set is a nonconvex, nonsmooth algebraic variety:
\begin{align*}
    \boundedrank = \{X \in \reals^{m \times n} : \rank(X) \leq r \}.
\end{align*}
Such problems are computationally hard in general~\citep{gillis2011low}, but we can still aim for local solutions.
There exist at least three paradigms to do so. 

The first consists of methods that optimize directly over $\boundedrank$.
This notably includes the projected gradient descent (PGD) algorithm~\citep{goldfarb2011convergence,jain2010guaranteed,jain2014iterative} and variants~\citep{schneider2015convergence}.
These are first-order methods with limited local convergence rates (at most linear).

Another approach is to optimize only over the maximal rank stratum
\begin{align*}
    \fixedrank = \{X \in \reals^{m \times n} : \rank(X) = r\},
\end{align*}
thereby ignoring matrices of rank
strictly less than $r$.
That stratum accounts for almost all of $\boundedrank$ and it is a smooth
manifold.
Thus, it is possible to endow it with a Riemannian structure and roll out
off-the-shelf second-order optimization
algorithms~\citep{absil2008optimization,shalit2012online,vandereycken2013low}.
Those algorithms generate sequences of matrices of rank $r$.
These may converge to matrices of rank strictly less than $r$, in
which case they ``fall off'' the manifold:
theory for optimization on manifolds breaks down
in that event, which makes it difficult to provide \emph{a priori} guarantees.
This is likely to happen when the parameter $r$ overestimates
the actual rank of the solution.
Additionally, the Hessian can grow unbounded near lesser rank
matrices~\cite[\S2.3]{vandereycken2013low}.

The third, and perhaps most popular, class of techniques is to smoothly
parameterize the feasible set $\boundedrank$.
Specifically, let $\manifold$ be a smooth manifold and let $\lift \colon \manifold
\to \reals^{m \times n}$ be a smooth map such that $\lift(\manifold) =
\boundedrank$.
Then minimizing $\sfc$ over $\boundedrank$ is equivalent to minimizing the
lifted function $\mfc = \sfc \circ \lift$ over $\manifold$, for which we can
apply standard (Riemannian) optimization algorithms:
this last paradigm is what we explore here.

Unfortunately, even when $\sfc$ has favorable properties, the lifted cost function
$\mfc = \sfc \circ \lift$ may not.
As an example, consider the prominent parameterization $\lift(L, R) = L R^\top$ with
domain $\manifold = \reals^{m \times r} \times \reals^{n \times r}$.
We refer to it as the LR parameterization (see also Appendix~\ref{sec:regularizers}).
It reduces the feasible set to a simple Euclidean space, yet the lifted cost
function $\mfc$ has several drawbacks for theoretical analyses:\footnote{One might also worry about $\mfc$ suffering from spurious critical points introduced by the parameterization: see Section~\ref{subsec:points-of-interest} for reassurances.}
\begin{enumerate}
    \item Unbounded sublevel sets for $\mfc$.\\
            This is because the fibers $\lift^{-1}(X) = \{(L, R) : L R^\top = X\}$ are
            unbounded.
            An indirect effect of this is that $\mfc$ may not have a Lipschitz continuous
            gradient even if $\sfc$ does.
            This hinders global convergence analyses.
    \item No local \polyakloja{} (\pl{}) condition around minimizers of $\mfc$ in general.\\
            Indeed, say~\eqref{eq:problem} has a unique local minimizer $X$.
            Its fiber $\varphi^{-1}(X)$ is a set of local minimizers for $\mfc$. 
            If $\mfc$ is twice continuously differentiable ($\smooth{2}$) and satisfies local \pl{} around minimizers, then those form a locally smooth set~\citep{rebjock2024fast}.
            Yet $\varphi^{-1}(X)$ is a nonsmooth set when $\rank(X) < r$.
            Thus, $\mfc$ lacks local \pl{} even in this simple scenario.
            That hinders fast local convergence guarantees.
\end{enumerate}

Accordingly, a promising direction is to explore other smooth parameterizations
of $\boundedrank$ whose fibers would be smooth and bounded.
As it happens, \cite{khrulkov2018desingularization} proposed one such parameterization.
They call it a \emph{desingularization} (a term that comes from algebraic
geometry).
Their construction relies on the following simple fact: a matrix $X$ of size $m \times n$ has rank at most $r$
if and only if its kernel has dimension at least $n - r$, that is, its kernel
contains a subspace of dimension $n - r$.
The set of subspaces of dimension $s$ in $\reals^n$ is the \emph{Grassmannian},
which we define in terms of orthogonal projectors\footnote{
  Equivalently, we could define the Grassmannian as a quotient (Stiefel manifold by
  orthogonal group): see \citep{bendokat2024grassmann} for a review.
  That is the approach of~\cite{khrulkov2018desingularization}.}
as:
\begin{align*}
  \grassmann(n, s) = \big\{P \in \sym(n) : P^2 = P \text{ and } \rank(P) = s\big\},
\end{align*}
where $\sym(n)$ is the set of real symmetric matrices of size $n$.
In this way, $\grassmann(n, s)$ is an embedded submanifold of $\sym(n)$.
The desingularization of bounded-rank matrices is the parameterization
\begin{align}\label{eq:desing}
  \desing = \big\{(X, P) \in \reals^{m \times n} \times \grassmann(n, n - r) : X P = \zeros\big\} && \text{and} && \lift(X, P) = X.
\end{align}
We confirm in Proposition~\ref{prop:desing-man} of Section~\ref{sec:geometry} that $\desing$ is indeed a smooth manifold of dimension $(m + n - r)r$, which is the same as those of $\boundedrank$ and $\fixedrank$.
Given a pair $(X, P) \in \desing$, the equality $X P = \zeros$ implies that the kernel of $X$ contains a subspace of dimension $n - r$.
As such, the projector $P$ acts as a certificate that $\rank X \leq r$.
\begin{figure}
  \centering
  \begin{tikzcd}
    {\reals^{m \times n} \times \sym(n)} && {\desing} && {\reals^{m \times n}} && \reals
    \arrow[hook', from=1-3, to=1-1]
    \arrow["{\varphi(X, P)\;=\;X}"', from=1-3, to=1-5]
    \arrow["{\mfc \;=\; \sfc \,\circ\, \varphi}", curve={height=-24pt}, from=1-3, to=1-7]
    \arrow["\sfc"', from=1-5, to=1-7]
  \end{tikzcd}
  \caption{The desingularization manifold $\desing$ is embedded in $\embeddingspace =
    \reals^{m \times n} \times \sym(n)$.
    Its image through the parameterization is $\lift(\desing) = \boundedrank$.
    Problem~\eqref{eq:problem} is the minimization of $\sfc$ over
    $\boundedrank$.
    This is executed by minimizing the lifted function $\mfc = \sfc \circ \varphi$.}\label{fig:desing-lift}
\end{figure}
Figure~\ref{fig:desing-lift} illustrates the parameterization of $\boundedrank$
with $\desing$.

\cite{khrulkov2018desingularization} further describe how to compute efficiently with points and tangent vectors of $\desing$, and they build a second-order optimization algorithm that borrows ideas from optimization on manifolds (e.g., retractions) and from constrained optimization (e.g., KKT conditions).
This hybrid method does not come with convergence guarantees.

To exploit their parameterization further,
we endow $\desing$ with a Riemannian metric and with (a few) retractions.
This gives access to a large collection of general-purpose
optimization algorithms, including second-order ones~\citep{absil2008optimization}.
Then, building on work by \citet{levin2023finding},
we exploit the favorable geometry of the fibers of the desingularization
to establish both global and local convergence guarantees for optimization on
$\boundedrank$.
It appears that such guarantees had not been jointly available for a single
algorithm and for a wide class of cost functions.
We conclude with numerical experiments.

\subsection{Contributions}

We develop the geometry of $\desing$ beyond that of~\cite{khrulkov2018desingularization},
derive optimization tools with a focus on numerical efficiency,
secure both global and local convergence guarantees,
and run numerical experiments.
More precisely:
\begin{itemize}
\item In Section~\ref{sec:geometry}, we review the geometry of $\desing$,
  specify parsimonious numerical representations and provide several
  retractions.
  Anticipating our needs for convergence analysis, we study geometric properties of preimages of $\lift$.
\item In Section~\ref{sec:riemann} we equip $\desing$ with a family of Riemannian
  structures.
  We deduce formulas to compute the Riemannian gradient and Hessian of $\mfc =
  \sfc \circ \lift$ in terms of $\sfc$, efficiently.
  We also build second-order retractions.
\item[] Table~\ref{tab:complexities} summarizes the computational complexity of basic operations.
\item We exploit the favorable properties of $\desing$ to obtain
  convergence guarantees:
  \begin{itemize}
  \item In Section~\ref{subsec:global-conv} we show that $\lift$ preserves
    compactness of sublevel sets and also certain Lipschitz properties for the
    cost function.
    We recover global convergence guarantees (accumulation at stationary points of~\eqref{eq:problem}) for a wide range of algorithms, in a similar fashion to~\citep{levin2023finding}.
  \item In Section~\ref{subsec:transitivity} we prove that $\lift$ can preserve
    a type of \polyakloja{} condition around minimizers.
    Assuming convergence to one of those minimizers, we deduce fast local rates for several algorithms, including superlinear rates for second-order algorithms (going beyond~\cite{levin2023finding}).
  \end{itemize}
\end{itemize}
Theorems~\ref{th:global} and~\ref{th:local} outline these guarantees.
Together, they yield algorithms with both global convergence and fast local rates under some assumptions on the cost function.

Finally, in Section~\ref{sec:experiments} we experimentally evaluate
algorithms for matrix completion tasks.
Our implementation is available in the open-source toolbox
Manopt~\citep{boumal2014manopt} and code for the numerical experiments is in a
public GitHub
repository.\footnote{\url{https://github.com/qrebjock/desingularization-experiments}}

\begin{table}[t]
  \centering
  \begin{minipage}[t]{.49\linewidth}
    \begin{tabular}[t]{c|c}
      Operation & Flop count\\
      \noalign{\vskip 1pt}\hline\noalign{\vskip 1pt}
      Retraction \hfill \S\ref{subsec:retractions} & $(m + n)r^2$\\
      \noalign{\vskip 1pt}\hline\noalign{\vskip 1pt}
      Inner product \hfill \S\ref{subsec:metric} & $(m + n)r$\\
      \noalign{\vskip 1pt}\hline\noalign{\vskip 1pt}
      \makecell{Project $(Y, Z)$ onto\\tangent space \hfill \S\ref{subsec:metric}} & $c_{yz} + n r^2$\\
      \noalign{\vskip 1pt}\hline\noalign{\vskip 1pt}
      Gradient \hfill \S\ref{subsec:1derivatives} & $c_g + nr^2$\\
      \noalign{\vskip 1pt}\hline\noalign{\vskip 1pt}
      \makecell{Hessian vector\\product \hfill\phantom{a}} \hfill \S\ref{subsec:2derivatives} & \makecell{$c_g + c_h$\\$+ (m + n)r^2$}
    \end{tabular}
  \end{minipage}
  \begin{minipage}[t]{.49\linewidth}
    \begin{tabular}[t]{c|c}
      Object & Memory\\
      \noalign{\vskip 1pt}\hline\noalign{\vskip 1pt}
      \makecell{Point $(X, P) \in \desing$\\$X = U \Sigma V^\top$\\$P = I - VV^\top$} & $(m + n + r)r$\\
      \noalign{\vskip 1pt}\hline\noalign{\vskip 1pt}
      \makecell{Tangent vector $(\dot X, \dot P)$\\$\dot X = K V^\top + U \Sigma V_p^\top$\\$\dot P = -V_p V^\top - V V_p^\top$}& $(m + n)r$\\
    \end{tabular}
  \end{minipage}
  \caption{Summary of time and space complexity for the main operations and
    objects.
    Sometimes the roles of $m$ and $n$ are asymmetric.
    It can be favorable to transpose rectangular problems when $m \ll n$ to have $n \leq m$.
    Another option is to desingularize the bounded-rank variety with the transposed constraint $X\transpose P = \zeros$, instead of $X P = \zeros$.
    The numbers $c_{yz}, c_g$ and $c_h$ are respectively the costs of computing
    \emph{(i)} $Y\transpose U$ and $Z V$,
    \emph{(ii)} $\grad \sfc(X) A$ and $\grad \sfc(X)\transpose B$ for any two matrices $A \in \reals^{n \times r}, B \in \reals^{m \times r}$, and
    \emph{(iii)} $\hess \sfc(X)[\dot X] V$ and $\hess \sfc(X)[\dot X]\transpose U$.
    These numbers scale at most as $mnr$ or $n^2 r$, but the cost can be much smaller if there is additional structure (for example sparsity).}\label{tab:complexities}
\end{table}

\subsection{Related work}
\label{sec:relatedwork}

\paragraph{Context and applications.}

Low-rank optimization is a classical problem with applications in recommender
systems~\citep{koren2009matrix}, Euclidean
embedding~\citep{singer2010uniqueness},
neuro-imaging~\citep{vounou2010discovering} and image
denoising~\citep{nejati2016denoising}.
Matrix completion~\citep{fazel2002matrix,candes2009exact} is arguably the most
prevalent low-rank optimization problem.
There is extensive literature exploring under which conditions a low-rank
matrix can be recovered from some of its entries~\citep{recht2010guaranteed,
  candes2010power, candes2010matrix, keshavan2010matrixfew}.

There are mainly two approaches to formulate low-rank optimization problems.
The first one is to regularize an unconstrained problem (over $m \times n$ matrices) with a nuclear norm penalty to promote low-rank solutions~\citep{fazel2001rank,srebro2004maximum,bach2008consistency}.
The second one imposes a strict limit on the rank of the matrix, resulting in a constrained problem.
This is the direction we follow.
(To solve the unconstrained problem with nuclear norm penalty, some algorithms exploit the low-rank structure of the solution by also explicitly imposing a rank constraint~\citep{mishra2013low,lee2024accelerating}.)

\paragraph{Algorithms.}

Many existing algorithms for~\eqref{eq:problem} cater to a \emph{specific} cost function for a
given application: see for example~\citep{srebro2003weighted, balzano2010online,
  cai2010singular, park2017non, ge2017no, dragomir2021quartic, kummerle2021scalable,
  bauch2021rank, gao2022riemannian, zilber2022gnmr} for weighted low-rank approximation, matrix
completion and matrix sensing.
Some algorithms are instead \emph{general-purpose}, in the sense that they can
deal with a wide range of cost functions.
We compete with those, some of which we list now.

Projected gradient descent techniques~\citep{jain2014iterative,schneider2015convergence} optimize directly over the singular variety $\boundedrank$: see below.

\cite{shalit2012online} and \citet{vandereycken2013low} propose to optimize over the
fixed-rank manifold $\reals_r^{m \times n}$ to take advantage of Riemannian optimization
tools.
These techniques are efficient in practice but theory breaks when iterates
approach lower-rank strata.

\cite{meyer2011linear} parameterize $\fixedrank$ with
quotient geometries.
\cite{levin2023finding} consider several parameterizations of
$\boundedrank$ (including LR and the desingularization
of~\cite{khrulkov2018desingularization}) and propose a special trust-region
method: we build on this extensively in later sections.

\paragraph{Global convergence guarantees.}\label{par:rw-global}

A method is \emph{globally convergent} if its iterates accumulate only at stationary points.
Surprisingly few algorithms are known to satisfy this guarantee on $\boundedrank$.
We review them here.

\citet[Thm.~5.6(i)]{themelis2018forward}, \cite{pauwels2024generic}, and \cite{olikier2025projected} proved that PGD is globally convergent.
In contrast, the P$^2$GD variant~\cite[Alg.~3]{schneider2015convergence} can generate a sequence converging to a non-stationary point~\cite[\S2.2]{levin2023finding}, \citep{levin2020towards, levin2025effect}.
\cite{olikier2023apocalypse,olikier2025low} refine P$^2$GD to ensure global convergence.

There are also guarantees for optimization through a parameterization $\lift
\colon \manifold \to \reals^{m \times n}$ such that $\lift(\manifold) = \boundedrank$.
In this case, one obstacle is that a critical point of the function $\mfc = \sfc
\circ \lift$ may not map to a stationary point of~\eqref{eq:problem}.
But other correspondences are known (see also
Section~\ref{subsec:points-of-interest}).
Notably, \cite{ha2020equivalence} prove that for the LR parameterization, all \emph{second-order} critical points of $\mfc$ map to stationary points of~\eqref{eq:problem}.
\cite{levin2020towards,levin2023finding} extend this to the desingularization (and more) to obtain a globally convergent trust-region type algorithm.
We build on that to add local guarantees.

\paragraph{Local convergence guarantees.}

Under non-degeneracy assumptions, several algorithms enjoy local convergence guarantees to minimizers of \emph{maximal} rank.
For example, we can instantiate general Riemannian optimization
theorems~\citep[\S4.5, \S6.3, \S7.4.2]{absil2008optimization} for the particular
case of $\reals_r^{m \times n}$~\citep{vandereycken2013low}.
This readily gives local convergence guarantees for standard algorithms
(including gradient descent and Newton's method).
Separately,~\citet[Thm.~3.9]{schneider2015convergence} obtain capture-type results and
convergence rates for P$^2$GD based on \loja{}-type arguments.
\cite{park2018finding} prove local linear convergence to maximal rank points for
an alternating descent algorithm with the LR parameterization.
\cite{olikier2025gauss} derive similar local guarantees for different
alternating rules and step sizes (including line search).

Ensuring fast local convergence to matrices of \emph{non-maximal} rank
is more delicate.
There exist only a couple of results in the literature.
\citet[Thm.~3.10]{schneider2015convergence} obtain convergence rates (at most
linear) for a specific retraction-free gradient method, and these are effective even at
non-maximal rank points.
Also, \cite{luo2024tensor} study a general tensor version of the matrix
sensing problem.
Their setting particularizes to matrix sensing over the constant rank
stratum $\reals_r^{m \times n}$.
This yields local linear convergence for Riemannian gradient descent.
Remarkably, it holds even when the sequence converges to a point of rank less
than $r$, though it is for a specific cost function.

In this paper, we systematize such results to a broader range of cost functions
and algorithms with the desingularization geometry.
It also allows for superlinear rates of convergence.

\paragraph{Positive semidefinite case.}

Another related problem is the minimization of a function over the set of
symmetric positive semidefinite matrices with bounded rank.
One popular approach is the \burermonteiro{} parameterization $\varphi \colon Y
\mapsto YY^\top$ of the feasible set.
Unlike the LR parameterization, the fibers of $\varphi$ are compact, which
is valuable for theoretical analyses.
This makes the positive semidefinite case markedly different~\citep{zhuo2024computational,ding2021rank,zhang2021sharp,zhang2024improved,yalccin2022factorization,zhang2023preconditioned,xu2023thepo,xiong2024how}.

\subsection{Conventions}\label{subsec:notation}

Given a smooth manifold $\nanifold$, we let $\tangent \nanifold$ denote its
tangent bundle, and $\tangent_x \nanifold$ the tangent space at a point $x \in
\nanifold$.

The identity matrix of size $r$ is $I_r$.
The Stiefel manifold is $\stiefel(n, r) = \{V \in \reals^{n \times r} : V\transpose V = I_r\}$.
The orthogonal group is $O(n) = \{Q \in \reals^{n \times n} : Q\transpose Q = I_n\}$.
Given a matrix $V \in \stiefel(n, r)$, we often let $V_\perp \in \stiefel(n, n -
r)$ denote any matrix such that $\big[\begin{matrix} V &
V_\perp \end{matrix}\big]$ is in $O(n)$.
(We also say that $V_\perp$ is an \emph{orthonormal completion} of $V$.)

For a linear operator $\mathcal{L}$ we let $\|\mathcal{L}\|_\opnorm$ denote its
operator norm.
For a matrix $A$ we let $\|A\|_2$ denote its spectral norm and $\|A\|_\frob$ its
Frobenius norm.
The singular values of $A$ are ordered as $\sigma_1(A) \geq \sigma_2(A) \geq \sigma_3(A) \geq \cdots$ so that $\|A\|_2 = \sigma_1(A)$.

For a real-valued function $f$ on a smooth manifold, we say $x$ is a \emph{critical point} if the gradient of $f$ at $x$ is zero.
When minimizing $f$ constrained to a set $S$, we say $x$ is a \emph{stationary point} if $\D f(x)[v] \geq 0$ for all $v$ in the (Bouligand) tangent cone to $S$ at $x$.

\section{Geometry of the desingularization}\label{sec:geometry}

This section explores the geometry of $\desing$:
we rederive its tangent spaces, consider some retractions, and analyze the structure of the fibers of $\lift$.

The dimension of $\desing$ is $(m + n - r)r$, which is much smaller than $m n$
when $r \ll \min(m, n)$.
For efficiency, we should store points and tangent vectors using a number of
floating point numbers close to $\dim \desing$.
We follow an approach based on the singular value decomposition (SVD)
as in~\citep{vandereycken2013low}, with some (but limited) resemblance to the
specifics in~\citep{khrulkov2018desingularization}.
Here $\stiefel$ denotes the Stiefel manifold (details are in
Section~\ref{subsec:notation}).

\begin{lemma}\label{lemma:representation}
  Given $(X, P) \in \desing$, there exist $U \in \stiefel(m, r)$, $V \in
  \stiefel(n, r)$, and a diagonal matrix $\Sigma \in \reals^{r \times r}$ with
  nonnegative, non-increasing diagonal entries such that $X = U \Sigma V^\top$ and $P = I - V V^\top$.
\end{lemma}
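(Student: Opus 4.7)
The plan is to build the representation in two stages: first encode $P$ through a Stiefel basis $V$, then use the constraint $XP=\zeros$ to reduce $X$ to an essentially $m\times r$ object and invoke the SVD.

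First, since $P\in\grassmann(n,n-r)$ is an orthogonal projector of rank $n-r$, its complement $I-P$ is an orthogonal projector of rank $r$. Choose any orthonormal basis $\tilde V\in\stiefel(n,r)$ of $\ima(I-P)$; then $I-P=\tilde V\tilde V\transpose$, equivalently $P=I-\tilde V\tilde V\transpose$. Next, the defining equation $XP=\zeros$ gives $X=X(I-P)=X\tilde V\tilde V\transpose$, so all information about $X$ is carried by the $m\times r$ matrix $X\tilde V$.

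Now take a (thin) singular value decomposition of $X\tilde V$, padded when $\rank(X\tilde V)<r$: write $X\tilde V=U\Sigma W\transpose$ with $U\in\stiefel(m,r)$, $\Sigma\in\reals^{r\times r}$ diagonal with non-negative entries, and $W\in O(r)$. Setting $V=\tilde V W$ yields $V\transpose V=W\transpose\tilde V\transpose \tilde V W=I_r$, so $V\in\stiefel(n,r)$, and crucially $VV\transpose=\tilde V W W\transpose \tilde V\transpose=\tilde V\tilde V\transpose$, so $P=I-VV\transpose$ is preserved. Finally,
\[
X=X\tilde V\tilde V\transpose=U\Sigma W\transpose\tilde V\transpose=U\Sigma(\tilde V W)\transpose=U\Sigma V\transpose,
\]
which is the desired decomposition.

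The only subtle point, and the reason one cannot literally apply a thin SVD, is that $X\tilde V$ may have rank strictly less than $r$ when $\rank(X)<r$; the remedy is the standard one of completing the left singular vectors to an element of $\stiefel(m,r)$ and padding $\Sigma$ with zero singular values. Everything else is a routine check that the right-rotation $W$ can be absorbed into $\tilde V$ without disturbing the projector $P$.
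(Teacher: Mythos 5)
Your proof is correct and follows essentially the same route as the paper: pick a Stiefel basis for $\ima(I-P)$, use $XP=\zeros$ to write $X$ in terms of $X\tilde V$, take an SVD of that $m\times r$ matrix, and absorb the right factor into the basis. The extra care you take about padding the SVD when $\rank(X\tilde V)<r$ is a fine (and correct) detail the paper leaves implicit.
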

\begin{proof}
  The eigenvalues of $P$ are $0$ ($r$ times) and $1$ ($n - r$ times).
  Thus we can choose $W \in \stiefel(n, r)$ such that $P = I - W W^\top$.
  The equality $XP = \zeros$ gives that $X = X W W^\top$.
  Now let $X W = U \Sigma H^\top$ be a thin SVD (in particular, $H \in O(r)$).
  Define $V = W H$ to obtain $X = U \Sigma V^\top$ and $P = I - VV^\top$.
\end{proof}

\begin{definition}\label{def:pt-repr}
  The triplet $(U, \Sigma, V)$ is a \emph{representation} of the point $(X, P)
  \in \desing$ if it satisfies the claim of Lemma~\ref{lemma:representation}.
\end{definition}

Note that this representation is \emph{not} unique (for example when
$X = \zeros$ any $U$ applies).
However, a point $(X, P)$ is stored numerically as a triplet $(U, \Sigma, V)$
and therefore all computations are expressed in terms of that representation
(see also Table~\ref{tab:complexities}).
If $V_\perp \in \stiefel(n, n - r)$ is an orthonormal completion of $V$ then $P
= V_\perp^{} V_\perp^\top$.
We often use this in theoretical analyses, but algorithms never have to actually
build this prohibitively large matrix.

\subsection{Smooth structure and tangent spaces}\label{subsec:smooth}

The set $\desing$ is an embedded submanifold of $\embeddingspace = \reals^{m \times n} \times \sym(n)$, as proven by~\cite{khrulkov2018desingularization}.
We provide another proof below, after a technical lemma.

\begin{lemma}\label{lemma:total-manifold}
  Given $s \in \{0, \dots, n\}$, the set $\sanifold = \big\{(X, W) \in \reals^{m
    \times n} \times \stiefel(n, s) : X W = \zeros\big\}$ is a smooth embedded
  submanifold of $\reals^{m \times n} \times \stiefel(n, s)$ of dimension $m(n -
  s) + ns - \dim \sym(s)$.
\end{lemma}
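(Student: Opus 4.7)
The plan is to realize $\sanifold$ as the preimage of a regular value of a smooth map out of the product manifold $\reals^{m \times n} \times \stiefel(n, s)$, so that the (embedded) submanifold structure and dimension follow from the standard preimage theorem.

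Concretely, I would define
\begin{align*}
  F \colon \reals^{m \times n} \times \stiefel(n, s) \to \reals^{m \times s}, \qquad F(X, W) = X W,
\end{align*}
which is smooth as a polynomial in the entries, and observe $\sanifold = F^{-1}(\zeros)$. Since $\reals^{m \times n} \times \stiefel(n, s)$ is already known to be a smooth manifold of dimension $mn + ns - \dim \sym(s)$, with tangent space at $(X, W)$ equal to $\reals^{m \times n} \times \tangent_W \stiefel(n, s)$, it suffices to prove that the differential of $F$ is surjective at every $(X, W) \in \sanifold$; then $\sanifold$ is an embedded submanifold of codimension $ms$, giving the announced dimension $mn + ns - \dim \sym(s) - ms = m(n - s) + ns - \dim \sym(s)$.

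The differential of $F$ at $(X, W)$ in a direction $(\dot X, \dot W)$ is the Leibniz expression $\D F(X, W)[\dot X, \dot W] = \dot X W + X \dot W$. To prove surjectivity onto $\reals^{m \times s}$, I would use the cheap choice $\dot W = 0$: given any target $A \in \reals^{m \times s}$, take $\dot X = A W^\top \in \reals^{m \times n}$, which is an admissible ambient tangent vector with no Stiefel constraint to satisfy. Then $\dot X W + X \cdot 0 = A W^\top W = A I_s = A$ since $W \in \stiefel(n, s)$. Hence $\D F(X, W)$ is onto, and $\zeros$ is a regular value of $F$.

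There is no real obstacle here beyond bookkeeping; the only point that requires care is to remember that the second slot of the differential must be a tangent vector to $\stiefel(n, s)$, which is why I pick the trivial $\dot W = 0$ rather than trying to solve in $\dot W$. Applying the constant-rank/preimage theorem then concludes that $\sanifold$ is a smooth embedded submanifold of $\reals^{m \times n} \times \stiefel(n, s)$ of the stated dimension.
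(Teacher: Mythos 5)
Your proposal is correct and follows essentially the same route as the paper: both define $h(X,W)=XW$ on the product manifold, compute $\D h(X, W)[\dot X, \dot W] = \dot X W + X \dot W$, and conclude via the submersion/preimage theorem. Your choice $\dot X = A W^\top$, $\dot W = 0$ simply makes explicit the paper's one-line claim that the differential is full rank because $W$ has orthonormal columns.
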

\begin{proof}
  Define the smooth function $h \colon \reals^{m \times n} \times \stiefel(n, s)
  \to \reals^{m \times s}$ as $h(X, W) = XW$.
  This is a local defining function for $\sanifold$.
  Indeed, $h(X, W) = \zeros$ holds if and only if $(X, W) \in \sanifold$.
  Moreover, the differential of $h$ is $\D h(X, W)[\dot X, \dot W] = \dot X W + X \dot W$.
  It is full-rank because for all $Y \in \reals^{m \times s}$, there exists $(\dot X, \dot W)$ such that $\D h(X, W)[\dot X, \dot W] = Y$, e.g., let $(\dot X, \dot W) = (Y W^\top, \zeros)$.
  It follows that $\sanifold$ is an embedded submanifold of dimension $mn + \dim
  \stiefel(n, s) - ms$, or equivalently $mn + ns - \dim \sym(s) - ms$.
\end{proof}

\begin{proposition}\label{prop:desing-man}
  The set $\desing$ is an embedded submanifold of $\embeddingspace$ of dimension $(m + n - r)r$.
\end{proposition}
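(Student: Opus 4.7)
The plan is to construct a local defining function for $\desing$ inside $\embeddingspace$ around each of its points and then invoke the regular value theorem, in the same spirit as Lemma~\ref{lemma:total-manifold}. The extra ingredient compared to that lemma is a smooth local way to write $P \in \grassmann(n, n-r)$ near a fixed $P_0$ as $W W^\top$ for some $W \in \stiefel(n, n-r)$.

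Fix $(X_0, P_0) \in \desing$ and pick $W_0 \in \stiefel(n, n-r)$ with $W_0 W_0^\top = P_0$. The map $\pi : \stiefel(n, n-r) \to \grassmann(n, n-r)$, $W \mapsto W W^\top$, is a smooth surjective submersion (indeed a principal $O(n-r)$-bundle), so it admits a smooth local section $s : U \to \stiefel(n, n-r)$ on some neighborhood $U$ of $P_0$ with $s(P) s(P)^\top = P$ and $s(P_0) = W_0$. Using $s$, I would define
\begin{equation*}
  h : \reals^{m \times n} \times U \to \reals^{m \times (n-r)}, \qquad h(X, P) = X \+ s(P).
\end{equation*}
Two claims then need verification. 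First, $h^{-1}(0) = \desing \cap (\reals^{m \times n} \times U)$: the columns of $s(P)$ form an orthonormal basis of $\ima P$, so $X s(P) = 0$ if and only if $X P = 0$. Second, $h$ is a submersion, because its differential
\begin{equation*}
  \D h(X, P)[\dot X, \dot P] = \dot X \+ s(P) + X \cdot \D s(P)[\dot P]
\end{equation*}
already surjects onto $\reals^{m \times (n-r)}$ via the partial map $\dot X \mapsto \dot X \+ s(P)$, since $s(P)$ has orthonormal columns. The regular value theorem then gives that $h^{-1}(0) = \desing \cap (\reals^{m \times n} \times U)$ is an embedded submanifold of the open set $\reals^{m \times n} \times U \subset \embeddingspace$ of codimension $m(n-r)$. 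Covering $\desing$ by such neighborhoods yields that $\desing$ is an embedded submanifold of $\embeddingspace$, of dimension $\dim \embeddingspace - m(n-r) = mn + (n-r)r - m(n-r) = (m + n - r) r$.

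The main obstacle is exhibiting the smooth local section $s$. This is standard but deserves a short justification: for $P$ close to $P_0$, the matrix $P W_0$ is close to $P_0 W_0 = W_0$ and hence has full column rank, so taking the $Q$ factor of its thin QR decomposition produces $s(P) \in \stiefel(n, n-r)$ depending smoothly on $P$, with $\ima s(P) = \ima P$ and therefore $s(P) s(P)^\top = P$. With $s$ in hand, everything else is a direct application of the submersion theorem. An alternative route, more in line with Lemma~\ref{lemma:total-manifold}, is to invoke the quotient manifold theorem for the free and proper action of $O(n-r)$ on $\sanifold$ (with $s = n-r$) given by $(X, W) \cdot Q = (X, W Q)$; the local-section route above avoids that heavier machinery.
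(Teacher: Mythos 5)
Your proof is correct, but it follows a genuinely different route from the paper's. The paper first builds the Stiefel "total space" $\nanifold = \{(X,W) : XW = 0\}$ via Lemma~\ref{lemma:total-manifold}, passes to the quotient $\nanifold/O(n-r)$ by the quotient manifold theorem, and then verifies that $\psi([X,W]) = (X, WW^\top)$ is a smooth embedding (an immersion and a homeomorphism onto its image) to conclude via \cite[Prop.~5.2]{lee2013smooth}. You instead work directly in the ambient space with a local defining function $h(X,P) = X\+s(P)$, where $s$ is a smooth local section of $W \mapsto WW^\top$ obtained as the Q-factor of $PW_0$; the submersion property is immediate from the partial differential $\dot X \mapsto \dot X\+ s(P)$, and your dimension count $mn + (n-r)r - m(n-r) = (m+n-r)r$ matches the paper's. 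Your argument is more elementary and self-contained: it avoids the quotient manifold theorem and, more importantly, avoids the homeomorphism-onto-image check, which is the step most easily glossed over in embedding arguments. What the paper's construction buys in exchange is the explicit identification of $\desing$ with the quotient $\nanifold/O(n-r)$, a device it reuses (with a larger group) in the proof of Lemma~\ref{lemma:stratum-preimage}; your local-section trick would not transfer as directly there because the preimages of lower-rank strata are not cut out by a single equation of constant rank. One presentational remark: the proposition is stated relative to $\embeddingspace = \reals^{m\times n}\times\grassmann(n,n-r)$ as defined just before it, which is what your codimension count uses; if one prefers the ambient $\reals^{m\times n}\times\sym(n)$ of Figure~\ref{fig:desing-lift}, a final appeal to transitivity of embeddings (using that $\grassmann(n,n-r)$ is embedded in $\sym(n)$) closes the gap in either proof.
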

\begin{proof}
  Lemma~\ref{lemma:total-manifold} with $s = n - r$ implies that $\nanifold = \big\{ (X, W) \in \reals^{m \times n} \times \stiefel(n, n - r) : XW = \zeros \big\}$ is a smooth embedded submanifold of $\reals^{m \times n} \times \stiefel(n, n - r)$ of dimension $mr + n(n - r) - \dim \sym(n - r)$.
  The orthogonal group $O(n - r)$ acts as $\theta((X, W), Q) =
  (X, WQ)$: this is smooth, free and proper.
  Thus the quotient set $\nanifold / O(n - r)$ is a quotient manifold of
  dimension $(m + n - r)r$~\cite[Thm.~21.10]{lee2013smooth}.
  Define $\psi \colon \nanifold / O(n - r) \to \reals^{m \times n} \times \grassmann(n, n - r)$ as $\psi([X, W]) = (X, W W^\top)$.
  The function $\psi$ is a smooth immersion and it is also a homeomorphism onto
  $\desing = \psi(\nanifold / O(n - r))$.
  So $\psi$ is a smooth embedding and we conclude that $\desing$ is an embedded
  submanifold of $\reals^{m \times n} \times \grassmann(n,n-r)$~\cite[Prop.~5.2]{lee2013smooth}.
  Since $\grassmann(n,n-r)$ is embedded in $\sym(n)$, it follows that $\desing$ is also an embedded submanifold of $\embeddingspace$.
\end{proof}

It is easy to see that $\desing$ is closed and
connected since $\grassmann(n, n - r)$ is so.
These properties are desirable for algorithmic convergence.
In contrast, $\reals_r^{m \times n}$ is not closed in $\reals^{m \times n}$.

\begin{proposition}\label{prop:desing-closed}
  The manifold $\desing$ is closed in $\embeddingspace$ and path-connected.
\end{proposition}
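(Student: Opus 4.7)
The plan is to prove closedness and path-connectedness in turn, each as a short consequence of basic facts combined with the representation lemma.

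For closedness in $\embeddingspace = \reals^{m \times n} \times \sym(n)$, I would write $\desing$ as an intersection of closed sets. First, $\grassmann(n, n - r)$ itself is closed in $\sym(n)$: the condition $P^2 = P$ is closed by continuity of matrix multiplication, and on the set of orthogonal projectors one has $\rank(P) = \Tr(P)$ (since eigenvalues lie in $\{0, 1\}$), so the rank constraint becomes the closed condition $\Tr(P) = n - r$. Next, the relation $XP = 0$ is closed in $\embeddingspace$ by continuity. Intersecting the closed subsets $\reals^{m \times n} \times \grassmann(n, n - r)$ and $\{(X, P) : XP = 0\}$ gives $\desing$, which is therefore closed.

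For path-connectedness, the key step is to use Lemma~\ref{lemma:representation} to reduce to points with $X = 0$. Given $(X, P) \in \desing$ with representation $(U, \Sigma, V)$, I would consider the path
\begin{equation*}
  t \longmapsto \big((1 - t)\, U \Sigma V^\top,\; I - V V^\top\big), \qquad t \in [0, 1].
\end{equation*}
This path stays in $\desing$ because $V^\top (I - V V^\top) = 0$, and it joins $(X, P)$ to $(0, I - V V^\top)$. Hence it is enough to connect any two points of the form $(0, P_0)$ and $(0, P_1)$ with $P_0, P_1 \in \grassmann(n, n - r)$. Since the constraint $0 \cdot P = 0$ holds trivially, any continuous path of projectors lifts to a path in $\desing$, so the question reduces to path-connectedness of $\grassmann(n, n - r)$. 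This is classical: any two $(n-r)$-dimensional subspaces are related by a rotation in $\mathrm{SO}(n)$, which is path-connected and acts continuously on projectors by conjugation $P \mapsto Q P Q^\top$. Concatenating the three paths finishes the argument.

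I do not foresee any real obstacle. The only mildly delicate point is the closedness of $\grassmann(n, n - r)$ in $\sym(n)$, which relies on replacing the rank constraint by the trace constraint via the projector identity above.
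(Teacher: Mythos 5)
Your proof is correct and follows the same route the paper takes (the paper only sketches it in one sentence, attributing closedness and connectedness of $\desing$ to the corresponding properties of $\grassmann(n, n-r)$). Your fleshed-out version — closedness via the trace characterization of rank on projectors plus continuity of $(X,P)\mapsto XP$, and path-connectedness via the radial path $t\mapsto((1-t)X,P)$ followed by a path in the Grassmannian — is a valid completion of that sketch.
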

    %
    %
  %
  %

We now derive the tangent spaces of $\desing$.
Given a point $(X, P)$, we find a parameterization of the tangent space at $(X,
P)$ that relies on its representation (see Definition~\ref{def:pt-repr}).

\begin{proposition}\label{prop:tangent-space}
  Represent $(X, P) \in \desing$ with $(U, \Sigma, V)$.
  The tangent space at $(X, P)$ is
  \begin{equation}\label{eq:tangent-space}
    \begin{aligned}
      \tangent_{(X, P)} \desing &= \Big\{ \big(K V^\top + U \Sigma V_p^\top, -V_pV^\top - VV_p^\top\big) : K \in \reals^{m \times r},\\&\qquad\qquad\qquad\qquad\qquad\qquad\qquad\qquad\quad\; V_p \in \reals^{n \times r}, V\transpose V_p = \zeros \Big\}.
    \end{aligned}
  \end{equation}
\end{proposition}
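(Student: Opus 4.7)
The plan is to view $\desing$ locally as the zero set of $h(X, P) = XP$ inside the ambient manifold $\reals^{m \times n} \times \grassmann(n, n - r)$, parameterize the tangent space of the Grassmannian factor explicitly in terms of the representation $(U, \Sigma, V)$, then combine with the linearization of $h$ and close the argument with a dimension count against Proposition~\ref{prop:desing-man}.

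First I would recall the tangent space of $\grassmann(n, n - r)$ at $P = I - VV^\top$. Writing $P(t) = I - V(t) V(t)^\top$ for a smooth Stiefel curve with $V(0) = V$ and decomposing $\dot V = V A + V_\perp B$ with $A^\top = -A$, one computes $\dot P = -V_p V^\top - V V_p^\top$ where $V_p := V_\perp B$ satisfies $V^\top V_p = 0$; the contribution from $V A$ vanishes because $V A V^\top + V A^\top V^\top = V(A + A^\top) V^\top = 0$. So $\tangent_P \grassmann(n, n - r)$ is exactly the set of $\dot P = -V_p V^\top - V V_p^\top$ with $V^\top V_p = 0$, a linear space of dimension $r(n-r)$.

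Next I would feed this parameterization into the linearized constraint $\D h(X, P)[\dot X, \dot P] = \dot X P + X \dot P = 0$. Using $V^\top V = I_r$ and $V^\top V_p = 0$:
\begin{align*}
  X \dot P = U \Sigma V^\top (-V_p V^\top - V V_p^\top) = -U \Sigma V_p^\top,
\end{align*}
so the constraint becomes $\dot X (I - V V^\top) = U \Sigma V_p^\top$. Equivalently, the component of $\dot X$ in the image of $P$ is forced to equal $U \Sigma V_p^\top$, while $\dot X V V^\top$ is unconstrained. Setting $K := \dot X V \in \reals^{m \times r}$ then gives precisely $\dot X = K V^\top + U \Sigma V_p^\top$, which is the claimed form. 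A direct substitution confirms conversely that any $(\dot X, \dot P)$ of that form satisfies $\dot X P + X \dot P = 0$ and has $\dot P$ tangent to the Grassmannian, hence lies in $\tangent_{(X, P)} \desing$.

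To conclude, I would count parameters in the right-hand side of~\eqref{eq:tangent-space}: $K$ contributes $mr$ free dimensions and $V_p \in \reals^{n \times r}$ constrained by $V^\top V_p = 0$ contributes $nr - r^2 = (n - r)r$, for a total of $(m + n - r)r$. Since $\desing$ has this same dimension by Proposition~\ref{prop:desing-man}, the inclusion of linear subspaces forces equality. The only mildly delicate point is that the representation $(U, \Sigma, V)$ is not unique (Lemma~\ref{lemma:representation}); this is harmless because the right-hand side of~\eqref{eq:tangent-space}, viewed as a subset of $\reals^{m \times n} \times \sym(n)$, is invariant under the admissible changes of representation.
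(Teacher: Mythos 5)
Your proposal is correct, but it proves the opposite inclusion from the one the paper proves directly. The paper's proof exhibits explicit curves $t \mapsto \big(Y(t)V(t)^\top,\, I - V(t)V(t)^\top\big)$ on $\desing$ whose initial velocities realize every vector of the claimed form, establishing that the right-hand side of~\eqref{eq:tangent-space} is \emph{contained in} $\tangent_{(X,P)}\desing$, and then concludes equality by the dimension count against Proposition~\ref{prop:desing-man}. You instead use that $h(X,P) = XP$ vanishes identically on $\desing$, so every tangent vector must satisfy $\dot X P + X \dot P = 0$ with $\dot P \in \tangent_P^{}\grassmann(n, n-r)$; solving this linear system shows the solution set is exactly the right-hand side of~\eqref{eq:tangent-space}, giving the inclusion $\tangent_{(X,P)}\desing \subseteq$ RHS, and the same dimension count closes the argument from the other side. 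Both routes lean equally on Proposition~\ref{prop:desing-man}; yours has the advantage of \emph{deriving} the form $\dot X = KV^\top + U\Sigma V_p^\top$ from the constraint rather than positing it and checking it spans enough dimensions. One sentence in your write-up is logically backwards: ``\dots hence lies in $\tangent_{(X,P)}\desing$'' claims that membership in the kernel of the linearized constraint implies tangency to $\desing$, which is precisely the non-automatic direction (it would require knowing that $\D h$ has locally constant rank on the ambient manifold; note $\D h$ is not surjective onto $\reals^{m\times n}$ here). This does not break the proof---the automatic inclusion $\tangent_{(X,P)}\desing \subseteq \ker \D h(X,P)$ plus your dimension count already forces equality---but that ``hence'' should be dropped or reversed. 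Your closing remark that the right-hand side is invariant under changes of representation is a worthwhile point the paper does not make explicit.
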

\begin{proof}
  Let $V_\perp$ be an orthonormal completion of $V$.
  Given $K \in \reals^{m \times r}$ and $L \in \reals^{(n - r) \times r}$, there
  exist two smooth curves $V \colon \reals \to \stiefel(n, r)$ and $Y \colon
  \reals \to \reals^{m \times r}$ such that
  \begin{align*}
    Y(0) = U\Sigma, && V(0) = V, && Y'(0) = K && \text{and} && V'(0) = V_\perp L.
  \end{align*}
  (See~\cite[\S7.3]{boumal2020introduction} for the curve on Stiefel.)
  Define $X(t) = Y(t)V(t)^\top$ and $P(t) = I - V(t)V(t)^\top$ so that $c(t) =
  (X(t), P(t))$ is a curve on $\desing$.
  We find that
  \begin{align*}
    X'(0) & = KV^\top + U \Sigma L^\top V_\perp^\top && \text{and} &
    P'(0) & = -VL^\top V_\perp^\top - V_\perp L V^\top.
  \end{align*}
  We deduce that $(KV^\top + U \Sigma L_{}\transpose V_\perp^\top, -V_\perp L V^\top - V
  L_{}\transpose V_\perp^\top)$ is in $\tangent_{(X, P)}\desing$.
  The variables $K$ and $V_p = V_\perp L$ have $mr$ and $(n - r)r$ degrees of freedom
  respectively.
  So this spans a subspace of dimension $(m + n - r)r$, which is the dimension of
  $\tangent_{(X, P)}\desing$ (see Proposition~\ref{prop:desing-man}).
\end{proof}

The expression given in Proposition~\ref{prop:tangent-space} reveals a
parsimonious representation for tangent vectors.
We can represent each with a pair $(K, V_p) \in \reals^{m \times r} \times
\reals^{n \times r}$, requiring only $(m + n)r$ real numbers.

\begin{definition}\label{def:tgt-repr}
  Represent $(X, P) \in \desing$ with $(U, \Sigma, V)$.
  A pair $(K, V_p) \in \reals^{m \times r} \times \reals^{n \times r}$ with
  $V_p\transpose V = \zeros$ is a \emph{representation} of
  the tangent vector $(\dot X, \dot P) \in \tangent_{(X, P)} \desing$ if
  \begin{align*}
      \dot X = K V^\top + U \Sigma V_p^\top && \textrm{ and } &&
      \dot P = -V_p V^\top - V V_p^\top.
  \end{align*}
\end{definition}

Given $(U, \Sigma, V)$, this representation of $(\dot X, \dot P)$ is unique.
We express (most) tangent vectors with this representation for numerical
efficiency.
For theoretical analyses we sometimes decompose $V_p = V_\perp L$ for some
matrix $L \in \reals^{(n - r) \times r}$.

\subsection{Retractions}\label{subsec:retractions}

Let $\tangent \desing$ denote the tangent bundle of $\desing$.
A retraction is a smooth map $\retr \colon \tangent\desing \to \desing$ to
navigate on $\desing$.
Given a point on $\desing$ and a tangent direction at that point, it provides a new point on $\desing$.
We let $\retr_{(X, P)} \colon \tangent_{(X, P)}\desing \to \desing$ denote the
restriction of a retraction $\retr$ at $(X, P) \in \desing$.
See~\cite[\S4.1]{absil2008optimization} and~\cite[\S3.6]{boumal2020introduction}
for background and references.
\begin{definition}\label{def:retr}
  A \emph{retraction} is a smooth map $\retr \colon \tangent\desing \to \desing$
  such that each curve defined as $c(t) = \retr_{(X, P)}(t \dot X, t \dot P)$
  satisfies $c(0) = (X, P)$ and $c'(0) = (\dot X, \dot P)$.
\end{definition}

Many retractions are already known for $\reals_r^{m \times n}$~\citep{absil2015low}.
\cite{khrulkov2018desingularization} proposed a retraction for $\desing$ based
on the Q-factor retraction on $\grassmann(n, n - r)$.
(This is one popular retraction for the Grassmannian.) 
We show below that any retraction on $\grassmann(n, n - r)$ yields a retraction
on $\desing$ in the same way.
(Later, in Section~\ref{subsec:2-retractions}, we also consider second-order retractions.)
We express all computations in terms of the parsimonious representations given
in Definitions~\ref{def:pt-repr} and~\ref{def:tgt-repr}.

\begin{proposition}\label{prop:retr-iff}
  Let $\retr^\grassmann \colon \tangent \grassmann(n, n - r) \to \grassmann(n, n
  - r)$ be smooth and define $\retr \colon \tangent \desing \to \desing$ as
  \begin{align*}
    \retr_{(X, P)}(\dot X, \dot P) = \big((X + \dot X)(I - \bar P), \bar P\big), && \text{where} && \bar P = \retr^\grassmann_P(\dot P).
  \end{align*}
  Then $\retr$ is a retraction on $\desing$ if and only if $\retr^\grassmann$ is
  a retraction on $\grassmann(n, n - r)$.
\end{proposition}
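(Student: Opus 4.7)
The plan is to verify each requirement of Definition~\ref{def:retr} in both directions, with the observation that the core computation boils down to differentiating the defining constraint $XP = \zeros$ of $\desing$.

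For the ``if'' direction, I would first check that $\retr$ lands in $\desing$: this reduces to $(X + \dot X)(I - \bar P)\bar P = \zeros$, which is immediate since $\bar P$ is an orthogonal projector. Smoothness of $\retr$ follows from composing smooth maps. At $t = 0$ the curve $c(t) = \retr_{(X, P)}(t \dot X, t \dot P)$ satisfies $\bar P(0) = \retr^\grassmann_P(0) = P$, and combined with $XP = \zeros$ this gives $c(0) = (X(I - P), P) = (X, P)$.

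The computation of $c'(0)$ is the core of the argument. Differentiating $c(t) = ((X + t \dot X)(I - \bar P(t)), \bar P(t))$ at $t = 0$ and using $\bar P'(0) = \dot P$ yields
\begin{align*}
  c'(0) = \big(\dot X (I - P) - X \dot P,\; \dot P\big) = \big(\dot X - (\dot X P + X \dot P),\; \dot P\big).
\end{align*}
The first component equals $\dot X$ precisely when $\dot X P + X \dot P = \zeros$. I expect this to be the main (but short) obstacle: it is not a coincidence but exactly the derivative of the defining equation $XP = \zeros$ along any smooth curve on $\desing$, so it holds automatically for every tangent vector at $(X, P)$.

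For the ``only if'' direction, I would exploit that $(\zeros, P) \in \desing$ for every $P \in \grassmann(n, n - r)$, and that $(\zeros, \dot P) \in \tangent_{(\zeros, P)} \desing$ for every Grassmannian tangent $\dot P$ at $P$. The latter follows from Proposition~\ref{prop:tangent-space} applied with $\Sigma = \zeros$ and $K = \zeros$: writing $P = I - V V\transpose$, the vector $V_p = -\dot P V$ satisfies $V_p\transpose V = \zeros$, because $V\transpose \dot P V = \zeros$ for any Grassmannian tangent $\dot P$ at $P$. Evaluating $\retr$ on such a pair yields $\retr_{(\zeros, P)}(\zeros, \dot P) = (\zeros, \retr^\grassmann_P(\dot P))$, so the two retraction conditions for $\retr^\grassmann$ at $t = 0$ are inherited directly from those of $\retr$.
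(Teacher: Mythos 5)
Your proof is correct and follows essentially the same route as the paper: the same curve $c(t)$ for the forward direction and the same $(\zeros, P)$ specialization for the converse. The only (valid) difference is that you justify $X'(0) = \dot X$ via the linearized constraint $\dot X P + X \dot P = \zeros$, which holds for every tangent vector by differentiating $XP = \zeros$ along curves, whereas the paper verifies the same identity by explicit computation with the representation $(K, V_p)$.
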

\begin{proof}
  Suppose $\retr^\grassmann$ is a retraction. 
  Let $(X, P) \in \desing$ and $(\dot X, \dot P) \in \tangent_{(X, P)}
  \desing$ have representations $(U, \Sigma, V)$ and $(K, V_p)$.
  Let $P(t) = \retr^\grassmann_P(t \dot P)$ and define $c(t) = (X(t), P(t))$
  where $X(t) = (X + t\dot X)(I - P(t))$.
  By design, $c(0) = (X, P)$.
  Moreover, $X'(t) = \dot X(I - P(t)) - (X + t\dot X)P'(t)$ so that
  \begin{align*}
    X'(0) &= \big(K V^\top + U \Sigma V_p^\top\big)VV^\top + U \Sigma V\transpose \big(V_pV^\top + VV_p^\top\big) = KV^\top + U \Sigma V_p^\top.
  \end{align*}
  It follows that $X'(0) = \dot X$ and we also have $P'(0) = \dot P$, so $c'(0)
  = (\dot X, \dot P)$ and $\retr$ is a retraction.
  Conversely, suppose that $\retr$ is a retraction.
  Let $P \in \grassmann(n, n - r)$ and $\dot P \in \tangent_P \grassmann(n, n -
  r)$.
  The point $(\zeros, P)$ is in $\desing$ and if we apply $\retr$ to the tangent
  vector $(\zeros, \dot P)$ we find $\retr_{(\zeros, P)}(\zeros, \dot P) =
  \big(\zeros, \retr_P^\grassmann(\dot P)\big)$, showing that $\retr^\grassmann$
  is also a retraction.
\end{proof}

We now specialize Proposition~\ref{prop:retr-iff} to the Q-factor retraction and
briefly demonstrate how to compute it (this can be compared to~\cite[\S3.4]{khrulkov2018desingularization}).
Let $(X, P) \in \desing$ and $(\dot X, \dot P) \in \tangent_{(X, P)}\desing$ have representations $(U, \Sigma, V)$ and $(K, V_p)$.
Let $V + V_p = QR$ be a thin QR decomposition, which can be computed in
$O(nr^2)$ operations.
Then the retracted point is $(\bar X, \bar P)$ where we define $\bar X = (X +
\dot X) Q Q^\top$ and $\bar P = I - Q Q^\top$.
We can efficiently obtain a representation for it.
First, form the matrix $W = (X + \dot X) Q = (U \Sigma + K) V\transpose Q + (U
\Sigma) V_p\transpose Q$.
This can be done in $O((m + n)r^2)$ operations using the representations of $X$ and $\dot X$.
Now let $W = \bar U \bar \Sigma H^\top \in \reals^{m \times r}$ be a thin SVD,
which can be computed in $O(mr^2)$ operations.
Then $(\bar X, \bar P)$ has representation $(\bar U, \bar \Sigma, \bar V)$ where
$\bar V = Q H$.

\subsection{Geometry of fibers and preimages}\label{subsec:geometry-fibers}

The fiber of a point $X \in \boundedrank$ is the set $\lift^{-1}(X)$.
In this section, we study more generally the preimages $\lift^{-1}(\sanifold)$
of subsets $\sanifold \subseteq \boundedrank$.
Their geometries affect both global and local convergence properties of
optimization algorithms, as we explain in Section~\ref{sec:conv-guarantees}.
A first notable property is that the preimages of compact sets are compact.

\begin{proposition}[{\cite[Ex.~3.13]{levin2023finding}}]\label{prop:lift-proper}
  The map $\lift$ is (topologically) proper.
\end{proposition}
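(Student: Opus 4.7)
The plan is to verify the definition of properness directly: for any compact set $\mathcal{K} \subseteq \reals^{m \times n}$, the preimage $\lift^{-1}(\mathcal{K}) = \{(X, P) \in \desing : X \in \mathcal{K}\}$ is compact. Since $\desing$ sits inside the Euclidean space $\embeddingspace = \reals^{m \times n} \times \sym(n)$, checking compactness reduces to checking closed and bounded.

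First I would observe that the Grassmannian factor is automatically well behaved. Any $P \in \grassmann(n, n-r)$ is an orthogonal projector, so $\|P\|_\frob^2 = \Tr(P^2) = \Tr(P) = n-r$; thus $\grassmann(n, n-r)$ is a bounded subset of $\sym(n)$. It is also closed, being cut out by the smooth equations $P^2 = P$ and $\Tr(P) = n-r$. Hence $\grassmann(n, n-r)$ is compact, and consequently so is the product $\mathcal{K} \times \grassmann(n, n-r)$.

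The key containment is $\lift^{-1}(\mathcal{K}) \subseteq \mathcal{K} \times \grassmann(n, n-r)$, which is immediate from the definition of $\desing$ in~\eqref{eq:desing}. This already shows that $\lift^{-1}(\mathcal{K})$ is bounded in $\embeddingspace$. To conclude, I would invoke Proposition~\ref{prop:desing-closed}, which states that $\desing$ is closed in $\embeddingspace$. Since $\mathcal{K} \times \sym(n)$ is also closed (as $\mathcal{K}$ is compact, hence closed), the intersection $\lift^{-1}(\mathcal{K}) = \desing \cap (\mathcal{K} \times \sym(n))$ is closed in $\embeddingspace$, and therefore closed inside the compact set $\mathcal{K} \times \grassmann(n, n-r)$. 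A closed subset of a compact set is compact, finishing the proof.

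There is no real obstacle here: the whole argument rests on the compactness of the Grassmannian (which comes for free from the projector representation) together with the closedness of $\desing$ already established in Proposition~\ref{prop:desing-closed}. The only point to be careful about is to phrase closedness relative to the ambient Euclidean space $\embeddingspace$ rather than relative to $\desing$ itself, so that the bounded-and-closed characterization of compactness applies cleanly.
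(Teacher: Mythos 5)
Your proof is correct and follows essentially the same route as the paper's: the preimage of a compact set is closed (via the closedness of $\desing$ in $\embeddingspace$ from Proposition~\ref{prop:desing-closed}) and contained in the compact product of that set with the compact Grassmannian, hence compact. You merely spell out the compactness of $\grassmann(n, n-r)$ and the closedness argument in more detail than the paper does.
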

\begin{proof}
    If $\sanifold$ is compact, then $\varphi^{-1}(\sanifold)$ is closed and it
    is included in $\sanifold \times \grassmann(n, n-r)$.
    Since the latter is compact in $\embeddingspace$, this shows that
    $\varphi^{-1}(\sanifold)$ is compact in $\desing$.
\end{proof}

We now seek to understand when $\lift^{-1}(\sanifold)$ is an embedded submanifold of $\desing$.
This will be important to secure the \polyakloja{} condition in Section~\ref{subsec:transitivity}.
We start with the preimages of constant rank strata.

\begin{lemma}\label{lemma:stratum-preimage}
  For all $s \in \{0, \dots, r\}$, the preimage $\lift^{-1}(\reals_s^{m \times n})
  = \big\{(X, P) \in \desing : \rank X = s\big\}$ is a smooth embedded
  submanifold of $\desing$ of dimension $(m + n - s)s + (n - r)(r - s)$.
\end{lemma}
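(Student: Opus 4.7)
The plan is to view $\desing$ as a smooth vector bundle of rank $mr$ over $\grassmann(n, n-r)$ through the projection $\pi \colon \desing \to \grassmann(n, n-r)$, $\pi(X, P) = P$. The fiber $\pi^{-1}(P) = \{X \in \reals^{m \times n} : XP = \zeros\}$ is the linear space of matrices whose rows lie in $\ker P$, hence has dimension $mr$, consistent with $\dim \desing - \dim \grassmann(n, n-r) = (m+n-r)r - (n-r)r = mr$.

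First, I would construct local trivializations of $\pi$. The map $\rho \colon \stiefel(n, r) \to \grassmann(n, n-r)$, $V \mapsto I - VV^\top$, is a smooth submersion (a principal $O(r)$-bundle), so it admits a smooth local section around any $P_0$: there exist a neighborhood $U$ of $P_0$ and a smooth $V \colon U \to \stiefel(n, r)$ with $V(P) V(P)^\top = I - P$. The map $\Phi \colon U \times \reals^{m \times r} \to \pi^{-1}(U)$ defined by $\Phi(P, Y) = (Y V(P)^\top, P)$ is then a diffeomorphism, with smooth inverse $(X, P) \mapsto (P, X V(P))$ (using $XP = \zeros$ to verify $X = X V(P) V(P)^\top$).

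Next, I would observe that since $V(P)$ has orthonormal columns, $\rank(Y V(P)^\top) = \rank Y$. Under $\Phi$ the set $\lift^{-1}(\reals_s^{m \times n}) \cap \pi^{-1}(U)$ therefore corresponds to $U \times \reals_s^{m \times r}$. Since $\reals_s^{m \times r}$ is the classical smooth embedded submanifold of $\reals^{m \times r}$ of dimension $(m+r-s)s$, this exhibits $\lift^{-1}(\reals_s^{m \times n})$ locally as an embedded submanifold. Being a submanifold is a local property, so the global statement follows. Its dimension is $r(n-r) + (m+r-s)s$, and a short algebraic rearrangement gives the announced $(m+n-s)s + (n-r)(r-s)$.

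The main technical point is the existence of the smooth local section $V$. This follows from standard principal bundle theory, but one can also exhibit it explicitly by $V(P) = (I-P) V_0 \, M(P)^{-1/2}$ where $V_0 V_0^\top = I - P_0$ and $M(P) = V_0^\top (I - P) V_0$ is positive definite for $P$ near $P_0$; the resulting $V(P)$ has orthonormal columns spanning $\ker P$.
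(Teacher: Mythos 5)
Your proof is correct, and it takes a genuinely different route from the paper's. The paper realizes $\lift^{-1}(\reals_s^{m\times n})$ as the image of an embedding from a quotient manifold: it forms $\{(X,W)\in\reals^{m\times n}\times\stiefel(n,n-s): XW=0\}$, quotients by a block-diagonal orthogonal group action, restricts to the open stratum $\rank X = s$, and then must verify that the induced map into $\desing$ is an injective immersion and a homeomorphism onto its image --- the delicate steps being the injectivity of the differential (a horizontal-space computation) and the smoothness of the inverse via the pseudo-inverse on constant-rank strata. You instead exploit the fact that $\pi\colon\desing\to\grassmann(n,n-r)$, $\pi(X,P)=P$, is a vector bundle with fiber $\{X: XP=0\}\cong\reals^{m\times r}$, build an explicit local trivialization $\Phi(P,Y)=(YV(P)^\top\mmm,P)$ from a smooth local section $V(P)=(I-P)V_0^{}M(P)^{-1/2}$ of $\stiefel(n,r)\to\grassmann(n,n-r)$ (this section is correct: $V(P)^\top V(P)=M(P)^{-1/2}V_0^\top(I-P)V_0^{}M(P)^{-1/2}=I$ since $(I-P)^2=I-P$), and observe that since $V(P)$ has orthonormal columns the rank-$s$ locus corresponds under $\Phi$ to $U\times\reals_s^{m\times r}$. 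This reduces everything to the classical fact that fixed-rank matrices form an embedded submanifold, plus the locality of the embedded-submanifold property; the dimension count $r(n-r)+(m+r-s)s=(m+n-s)s+(n-r)(r-s)$ checks out. Your argument is more elementary (no quotient manifolds, no pseudo-inverse, no injectivity-of-differential computation) and makes the dimension formula transparent as ``base plus rank-$s$ fiber stratum''; the paper's construction, in exchange, packages the preimage globally as a diffeomorphic image of a single quotient manifold rather than only giving local slice charts, though nothing downstream appears to need that extra structure.
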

\begin{proof}
  Lemma~\ref{lemma:total-manifold} (with $n - s$ instead of $s$) implies that
  \begin{align*}
    \sanifold = \big\{ (X, W) \in \reals^{m \times n} \times \stiefel(n, n - s) : X W = \zeros \big\}
  \end{align*}
  is a smooth embedded submanifold of $\reals^{m \times n} \times \stiefel(n, n - s)$ of dimension $ms + n (n - s) - \dim \sym(n - s)$.
  Define the Lie group $\mathcal{G}$ and the right action $\theta \colon
  \sanifold \times \mathcal{G} \to \sanifold$ as
  \begin{align*}
    \mathcal{G} = \bigg\{
    \begin{bmatrix}
      Q_1 & \\ & Q_2
    \end{bmatrix}
    : Q_1 \in O(n - r), Q_2 \in O(r - s)\bigg\} && \text{and} && \theta(X, W, Q) = (X, W Q).
  \end{align*}
  This action is smooth, proper and free.
  Hence the set $\mathcal{H} = \sanifold / \mathcal{G}$ is a quotient
  manifold~\cite[Thm.~21.10]{lee2013smooth} of dimension $\dim \sanifold - \dim
  \mathcal{G} = (m + n - s)s + (n - r)(r - s)$.
  The map $\phi \colon \mathcal{H} \to \reals$ defined by $\phi([X, W]) =
  \sigma_s(X)$ ($s$th largest singular value) is continuous, so
  \begin{align*}
    \mathcal{H}_s = \phi^{-1}(\interval[open]{0}{+\infty}) = \big\{[X, W] \in \mathcal{H} : \rank X = s\big\}
  \end{align*}
  is an open submanifold of $\mathcal{H}$.
  Now partition each $W \in \stiefel(n, n - s)$ into $(W_1, W_2)$ with $n - r$
  and $r - s$ columns.
  Define the smooth map $\psi \colon \mathcal{H}_s \to \desing$ as $\psi([X,
  W_1, W_2]) = (X, W_1^{}W_1^\top)$.
  The image of $\psi$ is exactly $\lift^{-1}(\reals_s^{m \times n})$.
  For all $[X, W_1, W_2] \in \mathcal{H}_s$ the kernel of $X$ is the image of $W_1^{}
  W_1^\top + W_2^{} W_2^\top$ because $\rank X = s$.
  Let $X^\dagger$ denote the pseudo-inverse of $X$.
  Since $I_n - X^\dagger X$ is the orthogonal projector onto $\ker X$, it follows that
  \begin{equation}\label{eq:w2w2t}
    W_2^{} W_2^\top = I - X^\dagger X - W_1^{}W_1^\top.
  \end{equation}
  In particular, the equivalence class $[X, W_1, W_2]$ is entirely determined by $(X, W_1^{} W_1^\top)$.
  It follows that $\psi$ is invertible.
  One can further check that $\psi^{-1}$ is smooth using the smoothness of $X
  \mapsto X^\dagger$ over constant-rank
  strata~\cite[\S4]{golub1973differentiation} and local
  sections~\cite[Thm.~4.26]{lee2013smooth}.
  We also find that $\D \psi$ is injective at each point of $\mathcal{H}_s$.
  To see this, define $\bar \psi \colon \mathcal{S} \to \desing$ as $\bar \psi(X, W_1, W_2) = (X, W_1^{}W_1^\top)$.
  This smooth map coincides with $\psi$ through the quotient (it is constant on equivalent classes of $\mathcal{S}$).
  Its differential $\D \bar \psi \colon \tangent_{(X, W)} \mathcal{S} \to
  \tangent_{(X, W_1^{}W_1^\top)}\desing$ satisfies
  \begin{align}\label{eq:diffbarpsi}
    \D \bar \psi(X, W_1, W_2)[\dot X, \dot W_1, \dot W_2] = (\dot X, \dot W_1^{} W_1^\top + W_1^{} \dot W_1^\top).
  \end{align}
  Suppose this quantity is zero.
  This readily gives $\dot X = \zeros$.
  Since $\dot W_1$ and $\dot W_2$ are tangent vectors of Stiefel manifolds,
  there exist~\cite[\S7.3]{boumal2020introduction} skew-symmetric matrices
  $\Omega_1 \in \sksym(n - r), \Omega_2 \in \sksym(r - s)$ and $B_1 \in
  \reals^{r \times (n - r)}, B_2 \in \reals^{(n - r + s) \times (r - s)}$ such that
  \begin{align*}
    \dot W_1 = W_1 \Omega_1 + W_{1, \perp} B_1 && \text{and} && \dot W_2 = W_2 \Omega_2 + W_{2, \perp} B_2,
  \end{align*}
  where $W_{1, \perp}$ and $W_{2, \perp}$ are orthonormal completions of $W_1$
  and $W_2$.
  The differential~\eqref{eq:diffbarpsi} is zero, so in particular $W_{1,\perp}^\top \dot W_1 = \zeros$, and we deduce that $B_1 = \zeros$.
  Furthermore, the identity~\eqref{eq:w2w2t} implies a link between $\dot X$, $\dot W_1$ and $\dot W_2$.
  Specifically, differentiating~\eqref{eq:w2w2t} gives
  \begin{equation*}
    \dot W_2^{} W_2^\top + W_2^{} \dot W_2^\top = -\dot Z X - Z \dot X - \dot W_1^{} W_1^\top - W_1^{} \dot W_1^\top,
  \end{equation*}
  where $Z = X^\dagger$, and $\dot Z$ is its differential along $\dot X$.
  Multiply this equality by $W_{2, \perp}^\top$ on the left and by $W_2$ on the right to get $W_{2, \perp}^\top \dot W_2 = 0$, showing that $B_2 = 0$.
  Therefore, $(\dot X, \dot W_1, \dot W_2) = (\zeros, W_1\Omega_1, W_2\Omega_2)$, i.e., it lies in the vertical space of $\mathcal{S}$ at $(X, W)$~\cite[\S9.4]{boumal2020introduction}.
  Consequently, $\ker \D \bar \psi(X, W_1, W_2)$ equals this vertical space, and hence $\D \psi$ is injective~\cite[\S3.5.8]{absil2008optimization}.
  (This is because vertical directions in the total space project to the zero tangent vector in the quotient.)
  The map $\psi$ is hence an immersion and a homeomorphism onto its image
  $\lift^{-1}(\reals_s^{m \times n})$.
  It is thus an embedding and its image is a smooth embedded
  submanifold of $\desing$~\cite[Prop.~5.2]{lee2013smooth}, of the same
  dimension as $\mathcal{H}$.
\end{proof}

We now generalize this result to embedded submanifolds of $\reals^{m \times n}$
included in a constant rank stratum, and also give an expression for the tangent
spaces.

\begin{proposition}\label{prop:submanifold-preimage}
  Let $\sanifold$ be an embedded submanifold of $\reals_s^{m \times n}$.
  Then $\lift^{-1}(\sanifold)$ is an embedded submanifold of $\desing$ of
  dimension $\dim \sanifold + (n - r)(r - s)$.
  If $\sanifold$ is $\smooth{k}$ for some $k$ then $\lift^{-1}(\sanifold)$ is
  also $\smooth{k}$.
  Moreover, the tangent space is
  \begin{align*}
    \tangent_{(X, P)} \lift^{-1}(\sanifold) = \big\{ (\dot X, \dot P) \in \tangent_{(X, P)} \desing : \dot X \in \tangent_X \sanifold \big\}.
  \end{align*}
\end{proposition}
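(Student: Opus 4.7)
The proof should follow from the submersion preimage theorem, applied relative to Lemma~\ref{lemma:stratum-preimage}. Since $\sanifold \subseteq \reals_s^{m \times n}$, we have $\lift^{-1}(\sanifold) = \lift_s^{-1}(\sanifold)$ where $\lift_s$ denotes the restriction of $\lift$ to $\lift^{-1}(\reals_s^{m \times n})$. My strategy is to show that $\lift_s$ is a smooth submersion onto $\reals_s^{m \times n}$. Once established, standard theory provides that $\lift^{-1}(\sanifold)$ is a $\smooth{k}$ embedded submanifold of $\lift^{-1}(\reals_s^{m \times n})$ of codimension $\dim \reals_s^{m \times n} - \dim \sanifold$, and by transitivity with the embedding from Lemma~\ref{lemma:stratum-preimage}, also a $\smooth{k}$ embedded submanifold of $\desing$. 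Subtracting the codimension from the dimension in Lemma~\ref{lemma:stratum-preimage} yields $\dim \lift^{-1}(\sanifold) = \dim \sanifold + (n-r)(r-s)$. The $\smooth{k}$ regularity transfers because locally $\sanifold$ is the zero set of a $\smooth{k}$ map of maximal rank, and precomposing with the $\smooth{\infty}$ submersion $\lift_s$ preserves both the regularity and the maximal rank condition.

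\textbf{The submersion.} To verify that $\lift_s$ is a submersion at $(X, P)$, I would use the diffeomorphism $\psi$ built in the proof of Lemma~\ref{lemma:stratum-preimage}: through $\psi$, $\lift_s$ corresponds to the projection $[X, W_1, W_2] \mapsto X$ from the quotient $\mathcal{H}_s$. Given $\dot X \in \tangent_X \reals_s^{m \times n}$, I would produce $\dot W$ tangent to $\stiefel(n, n-s)$ at $W$ (whose columns span $\ker X$) satisfying $\dot X W + X \dot W = \zeros$. Using the thin SVD $X = U_s \Sigma_s V_s^\top$ and the invertibility of $\Sigma_s$, this equation is readily solved, exhibiting the required lift.

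\textbf{Tangent space (main obstacle).} The submersion preimage theorem also gives
\begin{align*}
  \tangent_{(X, P)} \lift^{-1}(\sanifold) = \big\{(\dot X, \dot P) \in \tangent_{(X, P)} \lift^{-1}(\reals_s^{m \times n}) : \dot X \in \tangent_X \sanifold\big\},
\end{align*}
so the inclusion ``$\subseteq$'' of the claim is immediate (since $\D \lift(X, P)[\dot X, \dot P] = \dot X$ and $\lift^{-1}(\reals_s^{m \times n}) \subseteq \desing$). The reverse inclusion is the main obstacle: every $(\dot X, \dot P) \in \tangent_{(X, P)} \desing$ with $\dot X \in \tangent_X \sanifold$ must actually sit in $\tangent_{(X, P)} \lift^{-1}(\reals_s^{m \times n})$. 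Since $\tangent_X \sanifold \subseteq \tangent_X \reals_s^{m \times n}$, it suffices to prove $\tangent_X \reals_s^{m \times n} \subseteq \ima \D \lift(X, P)$ and then match dimensions. Using the parameterization of Proposition~\ref{prop:tangent-space} and the fact that the trailing $r - s$ diagonals of $\Sigma$ vanish when $\rank X = s$, a direct calculation gives $\ima \D \lift(X, P) = \{M \in \reals^{m \times n} : U_{s, \perp}^\top M V_\perp = \zeros\}$ and $\dim \ker \D \lift(X, P) = (n - r)(r - s)$. Because $\ima V_\perp$ is a subspace of $\ima V_{s, \perp}$ (the Stiefel completion of $V_s$), the intrinsic SVD characterization $\tangent_X \reals_s^{m \times n} = \{\dot X : U_{s, \perp}^\top \dot X V_{s, \perp} = \zeros\}$ is strictly stronger than the image condition, giving the required inclusion. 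Rank-nullity then matches dimensions with $\dim \lift^{-1}(\sanifold) = \dim \sanifold + (n-r)(r-s)$ and forces equality.
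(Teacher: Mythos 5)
Your proposal is correct and follows essentially the same route as the paper: both proofs reduce to the stratum preimage $\lift^{-1}(\reals_s^{m\times n})$ from Lemma~\ref{lemma:stratum-preimage} and exploit the surjectivity of $\D\lift$ restricted to that stratum (the paper phrases this as surjectivity of the differential of a pulled-back local defining function $H = h\circ\lift$, which is the standard proof of the submersion preimage theorem you invoke). Your tangent-space argument via $\ker\D\lift(X,P)$, the image characterization $U_{s,\perp}^\top M V_\perp = \zeros$, and rank–nullity is a correct cosmetic repackaging of the paper's dimension count, which instead exhibits an explicit injective linear map from $\mathcal{F}$ into $\tangent_X\sanifold\times\reals^{(n-r)\times(r-s)}$.
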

\begin{proof}
  The set $\desing_s = \{(X, P) \in \desing : \rank X = s\}$ is a smooth
  embedded submanifold of $\desing$ (Lemma~\ref{lemma:stratum-preimage}).
  We show that $\lift^{-1}(\sanifold)$ is an embedded submanifold of $\desing_s$
  and the first claim follows by transitivity.
  Let $\bar X \in \sanifold$ and $d = \dim \sanifold$.
  The dimension of $\reals_s^{m \times n}$ is $(m + n - s)s$.
  So there exists a neighborhood $\mathcal{U}$ of $\bar X$ in $\reals_s^{m
    \times n}$ and a local defining function $h \colon \mathcal{U} \to
  \reals^{(m + n - s)s - d}$ of $\sanifold$ around $\bar X$.
  In particular $\ker \D h(\bar X) = \tangent_{\bar X} \sanifold$.
  Let $\bar P$ be such that $(\bar X, \bar P) \in \lift^{-1}(\bar X)$ and
  $\mathcal{V} = \lift^{-1}(\mathcal{U})$ be a neighborhood of $(\bar X, \bar
  P)$ in $\desing_s$.
  Define the map $H \colon \mathcal{V} \to \reals^{(m + n - s)s - d}$ as $H(X,
  P) = h(X)$.
  We show that $H$ is a local defining function for $\lift^{-1}(\sanifold)$
  around $(\bar X, \bar P)$.
  For all $(X, P) \in \mathcal{V}$ the equality $H(X, P) = \zeros$ holds if and
  only if $(X, P) \in \lift^{-1}(\sanifold)$.
  The differential $\D H(\bar X, \bar P) \colon \tangent_{(\bar X, \bar P)}
  \desing_s \to \reals^{(m + n - s)s - d}$ is $\D H(\bar X, \bar P)[\dot X, \dot
  P] = \D h(\bar X)[\dot X]$ and we now prove that it is surjective.
  For this we use the surjectivity of $\D h(\bar X) \colon \tangent_{\bar X}
  \reals_s^{m \times n} \to \reals^{(m + n - s)s - d}$ (recall indeed that $\ker
  \D h(\bar X) = \tangent_{\bar X} \sanifold$).
  Given $\dot X \in \tangent_{\bar X} \reals_s^{m \times n}$, there exists $\dot
  P$ such that $(\dot X, \dot P) \in \tangent_{(\bar X, \bar P)} \desing_s$.
  This is because if $X \colon \reals \to \reals_s^{m \times n}$ is a smooth
  curve with $X(0) = \bar X$ and $X'(0) = \dot X$ then we can construct a smooth
  curve $c \colon \reals \to \desing_s$ such that $c(0) = (\bar X, \bar P)$ and
  $c'(0) = (\dot X, \dot P)$ for some $\dot P$.
  So we conclude that $\D H(\bar X, \bar P)$ is surjective (because $\D h(\bar
  X)$ is surjective) and $H$ is a local defining function for
  $\lift^{-1}(\sanifold)$.
  It follows that $\lift^{-1}(\sanifold)$ is an embedded submanifold of
  $\desing_s$ whose dimension is $\dim \desing_s - (m + n - s)s + d = \dim
  \sanifold + (n - r)(r - s)$ (Lemma~\ref{lemma:stratum-preimage} gives the
  dimension of $\desing_s$).
  The map $H$ is $\smooth{k}$ if $h$ is $\smooth{k}$, so we deduce that $\lift^{-1}(\sanifold)$ is $\smooth{k}$ if $\sanifold$ is $\smooth{k}$.

  We now derive the expression for the tangent space.
  Let $c \colon \reals \to \lift^{-1}(\sanifold)$ be a smooth curve such that
  $c(0) = (\bar X, \bar P)$.
  Then $t \mapsto \lift(c(t))$ is a smooth curve on $\sanifold$ passing through
  $\bar X$.
  In particular, we deduce that $\tangent_{(\bar X, \bar P)}
  \lift^{-1}(\sanifold) \subseteq \mathcal{F} = \{ (\dot X, \dot P) \in
  \tangent_{(\bar X, \bar P)} \desing : \dot X \in \tangent_{\bar X} \sanifold
  \}$.
  Let $(U, \Sigma, V)$ be a representation of $(\bar X, \bar P)$.
  Define the linear map $\ell \colon \mathcal{F} \to \tangent_{\bar X} \sanifold
  \times \reals^{(n - r) \times (r - s)}$ as $\ell(\dot X, \dot P) = (\dot
  X, V_\perp^\top \dot P V J)$, where $J \in \reals^{r \times (r - s)}$ is composed of the last $r - s$ columns of $I_r$.
  Let $(\dot X, \dot P)$ in the kernel of $\ell$ have representation $(K, V_\perp L)$.
  The expression of the tangent space in Proposition~\ref{prop:tangent-space} reveals that $K = \zeros$ and $L = \zeros$.
  So $\ell$ is injective, showing that $\dim \mathcal{F} \leq \dim \sanifold + (n - r)(r - s)$.
  This is the dimension of $\lift^{-1}(\sanifold)$ computed above so we conclude
  that $\tangent_{(\bar X, \bar P)} \lift^{-1}(\sanifold) = \mathcal{F}$.
\end{proof}

\begin{remark}\label{rmk:cst-rank}
  The conclusion of Proposition~\ref{prop:submanifold-preimage} may not hold if
  we remove the assumption that the matrices in $\sanifold$ all have the same
  rank.
  Take for example $m = n = 3$ and $r = 1$.
  Consider the set $\sanifold = \{\diag(t, 0, 0) : t \in \reals\}$, which is an
  embedded submanifold of $\reals^{m \times n}$.
  Then $\sanifold$ partitions into submanifolds $\sanifold_0 =
  \{\diag(0, 0, 0)\}$ and $\sanifold_{\neq 0} = \{\diag(t, 0, 0) : t \neq 0\}$,
  of dimensions $0$ and $1$ respectively.
  Accordingly, $\lift^{-1}(\sanifold) = \lift^{-1}(\sanifold_0) \cup
  \lift^{-1}(\sanifold_{\neq 0})$.
  Proposition~\ref{prop:submanifold-preimage} gives that these two preimages are
  embedded submanifolds of $\desing$ of dimensions $2$ and $1$ respectively, and they are disjoint.
  Their union is not an embedded submanifold.
\end{remark}

\begin{remark}\label{rmk:max-fiber-diffeomorphic}
  \TODOF{SIAM version does not have def of the preimage.}
  The maximal rank preimage $\lift^{-1}(\reals_r^{m \times n}) = \{(X, P) \in
  \desing : \rank X = r\}$ is an open submanifold of $\desing$ (of full
  dimension).
  This follows from Lemma~\ref{lemma:stratum-preimage}
  and~\citep[Prop.~5.1]{lee2013smooth}.
  In fact, $\lift^{-1}(\reals_r^{m \times n})$ is diffeomorphic to $\reals_r^{m
    \times n}$.
  Indeed, define the $\smooth{\infty}$ map $F \colon \fixedranksub \to
  \fixedrank$ as $F(X, P) = X$.
  It is bijective because when $\rank(X) = r$ there is a unique orthogonal
  projector $P$ of rank $n - r$ such that $XP = \zeros$.
  Let $X \in \fixedrank$ have thin SVD $X = U \Sigma V^\top$.
  Then the inverse is $F^{-1}(X) = (X, I - VV^\top)$.
  It can also be rewritten as $F^{-1}(X) = (X, I - X^\dagger X)$, revealing that
  $F^{-1}$ is smooth.
  (This is because the pseudo-inverse is smooth on matrices of fixed rank.)
\end{remark}

\section{Riemannian geometry}\label{sec:riemann}

We now endow the tangent spaces of $\desing$ with inner products to turn it into a Riemannian manifold.
This provides a notion of gradients, Hessians, and second-order retractions.

\subsection{\texorpdfstring{A family of metrics with parameter $\boldsymbol{\alpha}$}
                              {A family of metrics with parameter alpha}}\label{subsec:metric}

We equip the embedding space $\embeddingspace = \reals^{m \times n} \times
\sym(n)$ with the inner product
\begin{align}\label{eq:inner-product}
  \inner{\cdot}{\cdot}_\alpha \colon \embeddingspace \times \embeddingspace \to \reals && \text{defined as} && \sinner{(Y_1, Z_1)}{(Y_2, Z_2)}{\big}_\alpha = \inner{Y_1}{Y_2} + \alpha \inner{Z_1}{Z_2},
\end{align}
for some constant parameter $\alpha > 0$, where the inner products on the
right-hand side are the standard Frobenius ones.
This generalizes the canonical choice $\alpha = 1$.
We usually omit the subscript $\alpha$ because there is no ambiguity.
The inner product on $\embeddingspace$ induces inner products on the tangent spaces of
$\desing$, turning it into a Riemannian submanifold of $\embeddingspace$.
The set $\desing$ is closed in $\embeddingspace$ (Proposition~\ref{prop:desing-closed}) so we deduce
that $\desing$ is a complete manifold~\cite[Ex.~10.12]{boumal2020introduction}.

Such inner products can be computed in $O((m + n)r)$ operations
using the representation of tangent vectors in Definition~\ref{def:tgt-repr}.
For this, we define the diagonal matrix
\begin{align}\label{eq:sfactor}
  \sfactor(\alpha) = 2 \alpha I_r + \Sigma^2,
\end{align}
which appears in multiple expressions below.

\begin{proposition}\label{prop:inner-product}
  Given two tangent vectors $(\dot X_1, \dot P_1)$ and $(\dot X_2, \dot P_2)$ at
  $(X, P)$ with representations $(K_1, V_{p_1})$ and $(K_2, V_{p_2})$, their
  inner product is
  \begin{align*}
    \sinner{(\dot X_1, \dot P_1)}{(\dot X_2, \dot P_2)}{\big} = \inner{K_1}{K_2} + \inner{V_{p_1}}{V_{p_2} \sfactor(\alpha)},
  \end{align*}
  \TODOF{SIAM has slightly different phrasing here:}
  where $\sfactor(\alpha)$ is defined in~\eqref{eq:sfactor}.
  The induced norm is $\|(\dot X_1, \dot P_1)\| = \sqrt{\|K_1\|_\frob^2 +
    \|V_{p_1} \sfactor(\alpha)^{1/2}\|_\frob^2}$.
\end{proposition}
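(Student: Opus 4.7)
The plan is to substitute the explicit expressions for $(\dot X_i, \dot P_i)$ from Definition~\ref{def:tgt-repr} into the ambient inner product~\eqref{eq:inner-product}, expand everything as Frobenius traces, and then kill the cross terms using the orthogonality relations that come with the representation, namely $U^\top U = I$, $V^\top V = I$, and $V^\top V_{p_i} = \zeros$. The statement is essentially a direct calculation, so the only real concern is bookkeeping; I would split the computation into the $\reals^{m \times n}$ part and the $\sym(n)$ part.

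For the $\dot X$ part, writing $\dot X_i = K_i V^\top + U \Sigma V_{p_i}^\top$ and expanding $\langle \dot X_1, \dot X_2 \rangle = \Tr(\dot X_1^\top \dot X_2)$ produces four terms. The two diagonal terms simplify via $V^\top V = I$ and $U^\top U = I$ to $\Tr(K_1^\top K_2) = \langle K_1, K_2 \rangle$ and $\Tr(V_{p_1}^\top V_{p_2} \Sigma^2) = \langle V_{p_1}, V_{p_2} \Sigma^2 \rangle$. The two cross terms vanish because each contains a factor $V^\top V_{p_j}$ or $V_{p_j}^\top V$, which is zero by the constraint on the tangent representation.

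For the $\dot P$ part, writing $\dot P_i = -V_{p_i} V^\top - V V_{p_i}^\top$, the expansion of $\langle \dot P_1, \dot P_2 \rangle$ again yields four terms. The two terms of the shape $\Tr(V_{p_1} V^\top V_{p_2} V^\top)$ and $\Tr(V V_{p_1}^\top V V_{p_2}^\top)$ vanish by $V^\top V_{p_i} = \zeros$. The two remaining terms both simplify to $\Tr(V_{p_1}^\top V_{p_2}) = \langle V_{p_1}, V_{p_2} \rangle$ after using $V^\top V = I$, giving a total of $2 \langle V_{p_1}, V_{p_2} \rangle$. Combining the two pieces with the $\alpha$-weight of~\eqref{eq:inner-product} gives
\begin{equation*}
  \langle K_1, K_2 \rangle + \langle V_{p_1}, V_{p_2} \Sigma^2 \rangle + 2 \alpha \langle V_{p_1}, V_{p_2} \rangle = \langle K_1, K_2 \rangle + \langle V_{p_1}, V_{p_2} \+ \sfactor(\alpha) \rangle,
\end{equation*}
using $\sfactor(\alpha) = 2\alpha I + \Sigma^2$ from~\eqref{eq:sfactor}. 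The norm formula then follows by setting $(K_2, V_{p_2}) = (K_1, V_{p_1})$ and observing that $\Tr(V_{p_1}^\top V_{p_1} \sfactor(\alpha)) = \|V_{p_1} \sfactor(\alpha)^{1/2}\|_\frob^2$, where $\sfactor(\alpha)^{1/2}$ is well defined as a positive diagonal matrix since $\alpha > 0$.

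There is no substantial obstacle: the argument is a bilinear expansion followed by cancellations enforced by the structure of Proposition~\ref{prop:tangent-space}. The only thing to be careful about is handling the transposes so that factors like $\Sigma^2$ land on the right of $V_{p_2}$ to match the stated form; this is mostly a matter of exploiting the cyclic property of the trace and the fact that $\Sigma$ is diagonal (hence symmetric and commuting with scalar multiples of the identity).
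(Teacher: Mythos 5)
Your proposal is correct and follows exactly the same route as the paper: substitute the representations of Definition~\ref{def:tgt-repr} into the ambient inner product~\eqref{eq:inner-product}, expand bilinearly, and cancel cross terms via $V^\top V_{p_i} = \zeros$ and the orthonormality of $U$ and $V$, arriving at $\inner{K_1}{K_2} + \inner{V_{p1}\Sigma}{V_{p2}\Sigma} + 2\alpha\inner{V_{p1}}{V_{p2}}$. Nothing is missing.
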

\begin{proof}
  From the representations (Definition~\ref{def:tgt-repr}), we find that the
  inner product~\eqref{eq:inner-product} is
  \begin{align*}
    \sinner{(\dot X_1, \dot P_1)}{(\dot X_2, \dot P_2)}{\big} &= \sinner{K_1^{}V^\top + U\Sigma V_{p_1}^\top}{K_2^{}V^\top + U\Sigma V_{p_2}^\top}{\big} \\ & \qquad + \alpha \sinner{V_{p_1}V^\top + VV_{p_1}^\top}{V_{p_2}V^\top + VV_{p_2}^\top}{\big}\\
                                                       &= \inner{K_1}{K_2} + \inner{V_{p_1}\Sigma}{V_{p_2}\Sigma} + 2\alpha\inner{V_{p_1}}{V_{p_2}},
  \end{align*}
  which gives the announced formula.
\end{proof}

At a point $(X, P) \in \desing$, the inner product~\eqref{eq:inner-product}
induces a normal space $\normal_{(X, P)}\desing$.
It is the orthogonal complement of $\tangent_{(X, P)}\desing$ in $\embeddingspace$.

\begin{proposition}\label{prop:normal-space}
  At $(X, P) \in \desing$ with representation $(U, \Sigma, V)$, the normal space is
  \begin{equation}\label{eq:normal-space}
    \begin{aligned}
      \normal_{(X, P)} \desing &= \Big\{ \big(U A V_\perp^\top + U_\perp^{} B V_\perp^\top, V C V^\top + V_\perp^{} D\transpose V^\top + V D V_\perp^\top + V_\perp^{} E V_\perp^\top\big)\\
                               &\qquad\quad: \Sigma A - 2 \alpha D = \zeros\Big\},
    \end{aligned}
  \end{equation}
  where $A, B, C, D, E$ are matrices with appropriate sizes and $C =
  C^\top$, $E = E^\top$.
\end{proposition}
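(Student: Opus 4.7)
The plan is to compute the orthogonal complement of $\tangent_{(X, P)}\desing$ inside $\embeddingspace$ with respect to the inner product $\inner{\cdot}{\cdot}_\alpha$ in~\eqref{eq:inner-product}. To do so, I would fix a representation $(U, \Sigma, V)$ of $(X, P)$ together with orthonormal completions $U_\perp$ and $V_\perp$, so that $[U \ U_\perp]$ and $[V \ V_\perp]$ are orthogonal matrices and every element of $\embeddingspace$ can be expanded uniquely in the associated block bases. I would also use the parameterization of tangent vectors from Proposition~\ref{prop:tangent-space}, writing $\dot X = KV^\top + U\Sigma V_p^\top$ and $\dot P = -V_pV^\top - VV_p^\top$ with $V_p = V_\perp L$ for $L \in \reals^{(n-r)\times r}$, and decomposing $K = UK_1 + U_\perp K_2$.

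Next, I would parameterize an arbitrary ambient element as
\begin{align*}
  Y &= U A_{11} V^\top + U A V_\perp^\top + U_\perp B_1 V^\top + U_\perp B V_\perp^\top, \\
  Z &= V C V^\top + V_\perp D\transpose V^\top + V D V_\perp^\top + V_\perp E V_\perp^\top,
\end{align*}
where the symmetry of $Z$ forces $C$ and $E$ to be symmetric, and then compute $\sinner{(Y, Z)}{(\dot X, \dot P)}{}_\alpha = \inner{Y}{\dot X} + \alpha \inner{Z}{\dot P}$ block by block. Orthonormality of $[U\ U_\perp]$ and $[V\ V_\perp]$ kills most cross terms, leaving $\inner{Y}{\dot X} = \inner{A_{11}}{K_1} + \inner{B_1}{K_2} + \Tr(\Sigma A L)$ and $\inner{Z}{\dot P} = -2\Tr(DL)$. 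Setting the sum to zero for every $K_1, K_2, L$ then forces $A_{11} = 0$, $B_1 = 0$, and $\Tr((\Sigma A - 2\alpha D)L) = 0$ for all $L$, which is exactly $\Sigma A - 2\alpha D = \zeros$. The remaining unconstrained parameters $A, B, C, D, E$ (with $C, E$ symmetric, and $D$ coupled to $A$ by the constraint) yield precisely the description~\eqref{eq:normal-space}.

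The only place where real care is needed is bookkeeping: decomposing $Z$ in the $[V\ V_\perp]$ basis while preserving symmetry (which is why $V_\perp D\transpose V^\top$ and $V D V_\perp^\top$ appear as a symmetric pair), and tracking where $\alpha$ enters, since it is the scaling between the Frobenius metric on $\reals^{m\times n}$ and on $\sym(n)$ that produces the factor $2\alpha$ in the constraint $\Sigma A = 2\alpha D$. As a sanity check, one can verify that the degrees of freedom in~\eqref{eq:normal-space} (namely $r(n-r)$ for $A$, $(m-r)(n-r)$ for $B$, $r(r+1)/2$ for $C$, and $(n-r)(n-r+1)/2$ for $E$, with $D$ determined by $A$) sum to $\dim \embeddingspace - \dim \desing$, confirming that the orthogonal complement has been fully captured.
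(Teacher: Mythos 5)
Your proposal is correct and follows essentially the same route as the paper: expand a general ambient element in the $[U\ U_\perp]$, $[V\ V_\perp]$ block bases, pair it against the parameterized tangent vectors of Proposition~\ref{prop:tangent-space}, and read off the vanishing conditions $A_{11}=0$, $B_1=0$, $\Sigma A - 2\alpha D = \zeros$ (the paper keeps $K$ whole rather than splitting it into $K_1, K_2$, but that is cosmetic). The dimension sanity check you add is a nice extra confirmation not present in the paper.
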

\begin{proof}
  Let $(G, H)$ be in the normal space of $\desing$ at $(X, P)$.
  There exist matrices $A, B, C, D, E$ and $N_1, N_2$ of appropriate sizes such
  that $C = C^\top$, $E = E^\top$,
  \begin{align*}
    G & = UN_1V^\top + U_\perp^{} N_2V^\top + UAV_\perp^\top + U_\perp^{} BV_\perp^\top, \textrm{ and} \\
    H & = VCV^\top + V_\perp^{} D\transpose V^\top + V D V_\perp^\top + V_\perp^{} E V_\perp^\top.
  \end{align*}
  Let also $(\dot X, \dot P) = (KV^\top + U \Sigma L\transpose V_\perp^\top, -V_\perp
  L V^\top - V L\transpose V_\perp^\top)$ be a tangent vector parameterized by $K$
  and $L$ (Proposition~\ref{prop:tangent-space}).
  We compute the inner product
  \begin{align*}
    \sinner{(\dot X, \dot P)}{(G, H)}{\big} = \inner{K}{UN_1 + U_\perp N_2} + \inner{L^\top}{\Sigma A - 2 \alpha D}.
  \end{align*}
  This is zero for all matrices $K, L$ if and only if $N_1 = \zeros$,
  $N_2 = \zeros$ and $\Sigma A - 2 \alpha D = \zeros$.
\end{proof}

From the normal spaces we can deduce an expression for the orthogonal projectors
onto tangent spaces.
We use them in Section~\ref{subsec:derivatives} to compute Riemannian gradients
and Hessians.
The orthogonal projection of $(Y, Z) \in \embeddingspace$ onto $\tangent_{(X, P)}\desing$
is
\begin{align*}
  \proj_{(X, P)}(Y, Z) = \argmin_{(\dot X, \dot P) \in \tangent_{(X, P)} \desing} \big\|(\dot X - Y, \dot P - Z)\big\|^2,
\end{align*}
where the norm is induced by the inner product~\eqref{eq:inner-product}.
The (unique) solution $(\dot X, \dot P)$ is a tangent vector at $(X, P)$.
We seek a representation (Definition~\ref{def:tgt-repr}) for it.
\TODOF{SIAM version does not have Lemma~\ref{lemma:proj-x-p}.}
(See also Lemma~\ref{lemma:proj-x-p}.)

\begin{proposition}\label{prop:proj}
  Given $(X, P) \in \desing$ with representation $(U, \Sigma, V)$ and $(Y, Z)
  \in \embeddingspace$, the projection $\proj_{(X, P)}(Y, Z)$ is a tangent vector with
  representation
  \begin{align*}
    K = YV && \text{and} && V_p = P\pig(Y\transpose U \Sigma - 2 \alpha Z V\pig)\sfactor(\alpha)^{-1}.
  \end{align*}
\end{proposition}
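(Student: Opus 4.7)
The plan is to characterize the projection via the orthogonality condition and then parameterize tangent vectors so that the normal equations reduce to two uncoupled linear systems for $K$ and $V_p$. Concretely, a tangent vector $(\dot X, \dot P)$ equals $\proj_{(X,P)}(Y,Z)$ if and only if $\sinner{(Y - \dot X, Z - \dot P)}{(\dot X', \dot P')}{\big} = 0$ for every $(\dot X', \dot P') \in \tangent_{(X,P)}\desing$. Using Proposition~\ref{prop:tangent-space}, I would parameterize the unknown as $(K, V_p)$ with $V_p = V_\perp L$ for some unconstrained $L \in \reals^{(n-r)\times r}$, and the test vector $(\dot X', \dot P')$ analogously with $(K', L')$. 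This automatically enforces $V^\top V_p = 0$ and removes the linear constraint, so the orthogonality condition becomes a statement that must hold for \emph{all} free pairs $(K', L')$.

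Next I would expand the inner product using the defining formula~\eqref{eq:inner-product} together with the orthogonality relations $U^\top U = V^\top V = I$ and $V^\top V_\perp = 0$. The cross terms mixing the $V$- and $V_\perp$-directions vanish, and a short calculation gives
\begin{align*}
  \sinner{(\dot X, \dot P)}{(\dot X', \dot P')}{\big} &= \inner{K}{K'} + \inner{L \sfactor(\alpha)}{L'},\\
  \sinner{(Y, \zeros)}{(\dot X', \dot P')}{\big} &= \inner{YV}{K'} + \inner{V_\perp^\top Y^\top U \Sigma}{L'},\\
  \sinner{(\zeros, Z)}{(\dot X', \dot P')}{\big} &= -2\alpha \inner{V_\perp^\top Z V}{L'},
\end{align*}
where I use that $Z$ is symmetric to combine the two identical contributions from $-V_\perp L' V^\top$ and $-V (L')^\top V_\perp^\top$ in $\dot P'$. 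Collecting terms, the orthogonality condition reads $\inner{K - YV}{K'} + \inner{L\+\sfactor(\alpha) - V_\perp^\top(Y^\top U \Sigma - 2\alpha ZV)}{L'} = 0$.

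Since $K'$ and $L'$ range freely over $\reals^{m \times r}$ and $\reals^{(n-r) \times r}$, each bracket must vanish, which yields $K = YV$ and $L = V_\perp^\top(Y^\top U \Sigma - 2\alpha ZV)\+\sfactor(\alpha)^{-1}$. Here $\sfactor(\alpha) = 2\alpha I + \Sigma^2$ is invertible since $\alpha > 0$. Multiplying on the left by $V_\perp$ and using $V_\perp^{}V_\perp^\top = P$ gives $V_p = V_\perp L = P\pig(Y^\top U \Sigma - 2\alpha Z V\pig)\+\sfactor(\alpha)^{-1}$, which is the announced formula. As a sanity check, $V^\top V_p = 0$ holds automatically because $V^\top P = \zeros$, so the output is indeed a valid tangent representation in the sense of Definition~\ref{def:tgt-repr}.

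The proof is largely a bookkeeping exercise; the only subtlety worth flagging is choosing the parameterization $V_p = V_\perp L$ at the outset, which decouples the unknown from the constraint $V^\top V_p = 0$ and turns the projection computation into an unconstrained least-squares problem whose normal equations solve componentwise into the blocks $K$ and $L$.
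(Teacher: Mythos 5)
Your proof is correct and follows essentially the same route as the paper: both characterize the projection by requiring the residual $(Y-\dot X, Z-\dot P)$ to be orthogonal to the tangent space and then solve the resulting decoupled linear systems for $K$ and $L$. The only cosmetic difference is that the paper invokes its explicit description of the normal space (Proposition~\ref{prop:normal-space}) and matches coefficients, whereas you test orthogonality directly against an arbitrary tangent vector $(K', L')$, which bypasses that proposition but yields the same normal equations.
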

\begin{proof}
  The projection of $(Y, Z)$ on the tangent space at $(X, P)$ can be written as
  \begin{align*}
    \proj_{(X, P)}(Y, Z) &= \big(KV^\top + U \Sigma L\transpose V_\perp^\top, -V_\perp^{} L V^\top - V L\transpose V_\perp^\top\big)
  \end{align*}
  for some matrices $K$ and $L$ (Proposition~\ref{prop:tangent-space}).
  Moreover, the vector $(Y, Z) - \proj_{(X, P)}(Y, Z)$ is in the normal space
  $\normal_{(X, P)} \desing$.
  Hence, Proposition~\ref{prop:normal-space} gives that
  \begin{align*}
    \begin{cases}
      Y - KV^\top - U \Sigma L\transpose V_\perp^\top = UAV_\perp^\top + U_\perp^{} BV_\perp^\top, \\
      Z + V_\perp^{} L V^\top + V L\transpose V_\perp^\top = VCV^\top + V_\perp^{} D\transpose V^\top + V D V_\perp^\top + V_\perp^{} E V_\perp^\top
    \end{cases}
  \end{align*}
  for some matrices $A, B, C, D, E$ of appropriate sizes satisfying $\Sigma A -
  2 \alpha D = \zeros$, $C = C^\top$ and $E = E^\top$.
  The first equality gives $K = Y V$.
  We also get $U\transpose Y V_\perp - \Sigma L^\top = A$.
  The second equality gives $V\transpose Z V_\perp + L^\top = D$.
  Combining these last two relations and $Z = Z^\top$ yields
  \begin{align*}
    \zeros = A\transpose \Sigma - 2 \alpha D^\top = V_\perp\transpose \pig( Y\transpose U \Sigma - 2 \alpha Z V \pig) - L\sfactor(\alpha),
  \end{align*}
  from which we deduce the expressions for $L$ and $V_p = V_\perp L$.
\end{proof}

\begin{remark}\label{rmk:max-fiber-isometric}
  When $\alpha = 0$,
  the bilinear form~\eqref{eq:inner-product} is still positive definite on
  the tangent spaces of $\fixedranksub$.
  Indeed, in this case we have $\inner{(\dot X, \dot P)}{(\dot X, \dot P)} =
  \|\dot X\|_\frob^2$ for a given tangent vector $(\dot X, \dot P)$ at $(X,
  P)$.
  This is zero exactly if $\dot X = 0$;
  yet when $\rank X = r$ and $\dot X = 0$
  we also have $\dot P = 0$ by~\eqref{eq:tangent-space}.
  This proves that the inner product is still positive definite when $\alpha = 0$.
  Recall from Remark~\ref{rmk:max-fiber-diffeomorphic} that the manifolds
  $\fixedranksub$ and $\reals_r^{m \times n}$ are diffeomorphic.
  When $\alpha = 0$ they are additionally isometric.
  (It is easy to see that the differential of the map $F$ from
  Remark~\ref{rmk:max-fiber-diffeomorphic} preserves the inner products.)
  We deduce that algorithms running on $\reals_r^{m \times n}$ can equivalently
  be seen as running on $\fixedranksub$ with $\alpha = 0$.
\end{remark}

\subsection{Gradients and Hessians}\label{subsec:derivatives}

Consider a function $\sfc \colon \reals^{m \times n} \to \reals$ and let $\mfc =
\sfc \circ \lift$ be the lifted function over $\desing$.
Let $\grad \sfc$ and $\hess \sfc$ denote the Euclidean gradient and Hessian of
$\sfc$, and $\grad \mfc$ and $\hess \mfc$ denote the Riemannian gradient and
Hessian of $\mfc$.
In this section we express $\grad \mfc$ and $\hess \mfc$ in terms of $\grad
\sfc$ and $\hess \sfc$.
Importantly, $\mfc$ depends only on the first component of the pair $(X, P)$.
This considerably simplifies expressions for the derivatives.

\paragraph{First-order derivatives.}\label{subsec:1derivatives}

We first find a representation (Definition~\ref{def:tgt-repr}) for $\grad \mfc$.

\begin{proposition}\label{prop:gradient}
  The Riemannian gradient $\grad \mfc(X, P)$ is a tangent vector with representation
  \begin{align*}
      K = \grad \sfc(X) V && \text{and} && V_p = P \grad \sfc(X)\transpose U \Sigma \sfactor(\alpha)^{-1},
  \end{align*}
  where $(U, \Sigma, V)$ is a representation of $(X, P)$ and $\sfactor(\alpha)$
  is as in~\eqref{eq:sfactor}.
\end{proposition}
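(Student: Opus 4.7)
The plan is to reduce the computation to a direct application of the projection formula from Proposition~\ref{prop:proj}. The key observation is that $\mfc$ depends only on the $X$-component, so it extends trivially to a smooth function on the ambient space $\embeddingspace$, namely $\tilde \mfc(X, P) = \sfc(X)$. Since $\desing$ is a Riemannian submanifold of $\embeddingspace$ (with the inner product~\eqref{eq:inner-product}), the Riemannian gradient of $\mfc$ at $(X, P)$ equals the orthogonal projection onto $\tangent_{(X, P)}\desing$ of the ambient gradient of $\tilde \mfc$ at $(X, P)$.

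First I would compute the ambient gradient of $\tilde \mfc$ with respect to $\inner{\cdot}{\cdot}_\alpha$. By definition, this gradient is the unique pair $(G, H) \in \embeddingspace$ satisfying $\inner{(G, H)}{(Y, Z)}_\alpha = \D \tilde \mfc(X, P)[Y, Z] = \inner{\grad \sfc(X)}{Y}$ for all $(Y, Z) \in \embeddingspace$. Expanding the left-hand side as $\inner{G}{Y} + \alpha \inner{H}{Z}$ and matching coefficients yields $G = \grad \sfc(X)$ and $H = \zeros$, independently of $\alpha$.

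Next I would invoke Proposition~\ref{prop:proj} with $(Y, Z) = (\grad \sfc(X), \zeros)$. Substituting $Z = \zeros$ into its formulas gives directly
\begin{align*}
  K = \grad \sfc(X) V && \text{and} && V_p = P \pig(\grad \sfc(X)\transpose U \Sigma\pig) \sfactor(\alpha)^{-1},
\end{align*}
which is exactly the claimed representation of $\grad \mfc(X, P)$.

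There is no real obstacle here: once Proposition~\ref{prop:proj} is granted, the only thing to verify is that the Riemannian gradient on the submanifold equals the projection of the ambient gradient, which is a standard fact for Riemannian submanifolds (see \cite[\S3.8]{boumal2020introduction}). The mild subtlety is just to notice that since $\tilde \mfc$ does not depend on $P$, the $Z$-component of the ambient gradient vanishes, which is what makes the formula for $V_p$ simplify so cleanly.
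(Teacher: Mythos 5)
Your proposal is correct and follows essentially the same route as the paper: extend $\mfc$ to the ambient space, observe that its ambient gradient is $(\grad \sfc(X), \zeros)$, and apply Proposition~\ref{prop:proj}. Your extra care in computing the ambient gradient with respect to $\inner{\cdot}{\cdot}_\alpha$ (rather than just the Euclidean one) is a nice touch, though it changes nothing here since the $Z$-component vanishes either way.
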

\begin{proof}
  Formally, $\mfc(X, P) = \sfc(X)$ is defined on $\desing$.
  Let $\bar \mfc(X, P) = \sfc(X)$ denote an extension of $\mfc$ to $\embeddingspace$.
  Its Euclidean gradient is $\grad \bar \mfc(X, P) = (\grad \sfc(X), 0)$.
  As $\desing$ is a Riemannian submanifold of $\embeddingspace$, the Riemannian gradient
  is the projection of the Euclidean gradient onto the current tangent
  space~\cite[Prop.~3.61]{boumal2020introduction}.
  We deduce that $\grad \mfc(X, P) = \proj_{(X, P)}(\grad \sfc(X), \zeros)$ and
  conclude with Proposition~\ref{prop:proj}.
\end{proof}

The Riemannian gradient can be computed efficiently if $\grad \sfc(X)\transpose U$ and $\grad \sfc(X)V$ can be computed efficiently---see Table~\ref{tab:complexities}.
This can be the case when $\grad \sfc(X)$ is appropriately structured (for example if it is sparse).
From the expression of the gradient we deduce the first-order optimality conditions.

\begin{corollary}\label{cor:critical-point}
  A point $(X, P) \in \desing$ with representation $(U, \Sigma, V)$ is critical
  for $\mfc$ if and only if $\grad \sfc(X) V = \zeros$ and $\grad \sfc(X)\transpose U
  \Sigma = \zeros$.
\end{corollary}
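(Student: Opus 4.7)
The plan is to read off the statement as a direct consequence of the gradient formula in Proposition~\ref{prop:gradient}, taking care of one subtlety: the condition $V_p = \zeros$ involves a factor of $P$ that must be stripped off.

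First, I would invoke Proposition~\ref{prop:gradient} to express $\grad \mfc(X, P)$ in its unique parsimonious representation $(K, V_p)$ with $K = \grad \sfc(X) V$ and $V_p = P \grad \sfc(X)\transpose U \Sigma \sfactor(\alpha)^{-1}$. Since the representation of a tangent vector at $(X, P)$ is unique (see the remark after Definition~\ref{def:tgt-repr}), criticality of $(X, P)$ is equivalent to $K = \zeros$ and $V_p = \zeros$. Moreover, $\sfactor(\alpha) = 2\alpha I + \Sigma^2$ is positive definite (since $\alpha > 0$), hence invertible, so $V_p = \zeros$ is equivalent to $P \grad \sfc(X)\transpose U \Sigma = \zeros$.

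The ``if'' direction is immediate: if $\grad \sfc(X) V = \zeros$ and $\grad \sfc(X)\transpose U \Sigma = \zeros$, then $K = \zeros$ and $P \grad \sfc(X)\transpose U \Sigma = P \cdot \zeros = \zeros$, so $V_p = \zeros$ as well. For the ``only if'' direction, $K = \zeros$ gives $\grad \sfc(X) V = \zeros$ outright. The main (minor) obstacle is that $V_p = \zeros$ only yields $P \grad \sfc(X)\transpose U \Sigma = \zeros$, not directly $\grad \sfc(X)\transpose U \Sigma = \zeros$. To close this gap I would use $P = I - VV^\top$ and the already-established identity $\grad \sfc(X) V = \zeros$: transposing, $V^\top \grad \sfc(X)\transpose = \zeros$, so $V V^\top \grad \sfc(X)\transpose U \Sigma = \zeros$, and therefore
\begin{align*}
  \grad \sfc(X)\transpose U \Sigma = P \grad \sfc(X)\transpose U \Sigma + V V^\top \grad \sfc(X)\transpose U \Sigma = \zeros.
\end{align*}
This completes the equivalence.
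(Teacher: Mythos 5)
Your proof is correct and takes essentially the same route as the paper's: read off the representation $(K, V_p)$ of $\grad \mfc(X, P)$ from Proposition~\ref{prop:gradient} and use uniqueness of that representation to equate criticality with $K = \zeros$ and $V_p = \zeros$. The paper's own proof is a one-line assertion of the equivalence; you correctly supply the small step it leaves implicit, namely that $V_p = \zeros$ together with $\grad \sfc(X) V = \zeros$ yields the full condition $\grad \sfc(X)\transpose U \Sigma = \zeros$ via the decomposition $I = P + VV^\top$.
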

\begin{proof}
  Let $(K, V_p)$ be a representation of $\grad \mfc(X, P)$ (Proposition~\ref{prop:gradient}).
  The stated conditions are equivalent to $K = \zeros$ and $V_p = \zeros$.
\end{proof}

\paragraph{Second-order derivatives.}\label{subsec:2derivatives}

In this section we assume that $\sfc$ (and hence $\mfc$) is $\smooth{2}$.
We derive the Hessian of $\mfc$ as a function of the Hessian of $\sfc$.
The proof is computational and can be found in Appendix~\ref{sec:appendix-second-order-derivatives}.
\TODOF{SIAM version: we omit the proof.}
Notice again that $\hess \mfc$ can be computed efficiently if $\grad \sfc$ and $\hess \sfc$ are appropriately structured---see Table~\ref{tab:complexities}.

\begin{proposition}\label{prop:hessian}
  Let $(X, P) \in \desing$ and $(\dot X, \dot P) \in \tangent_{(X, P)}\desing$ have representations $(U, \Sigma, V)$ and $(K, V_p)$.
  Then $\hess \mfc(X, P)[\dot X, \dot P]$ is a tangent vector at $(X, P)$ with
  representation $(\bar K, \bar V_p)$ satisfying
  \begin{align*}
    \begin{cases}
      \bar K = \hess \sfc(X)[\dot X] V + M \grad\sfc(X)V_p\\
      \bar V_p = P \Big(\hess \sfc(X)[\dot X]\transpose U \Sigma + \grad\sfc(X)\transpose M K\Big)\sfactor(\alpha)^{-1}
    \end{cases}
    && \text{where} && M = I - U\Sigma^2\sfactor(\alpha)^{-1} U^\top.
  \end{align*}
\end{proposition}

From this, we deduce the second-order optimality conditions.
Notice that these are independent of $\alpha$, which is expected since first- and second-order criticality conditions do not depend on the Riemannian metric.

\begin{corollary}\label{cor:hess-inner-product}
  Let $(X, P) \in \desing$ and $(\dot X, \dot P) \in \tangent_{(X, P)} \desing$ have representations $(U, \Sigma, V)$ and $(K, V_p)$.
  Define $M = I - U\Sigma^2\sfactor(\alpha)^{-1} U^\top$.
  Then
  \begin{align*}
    \inner{(\dot X, \dot P)}{\hess \mfc(X, P)[\dot X, \dot P]} = \inner{\dot X}{\hess \sfc(X)[\dot X]} + 2 \inner{K}{M \grad \sfc(X) V_p}.
  \end{align*}
  In particular, the second-order optimality conditions are
  \begin{align*}
    \grad \sfc(X) V = \zeros, && \grad \sfc(X)\transpose U \Sigma = \zeros && \text{and} && \inner{\dot X}{\hess \sfc(X)[\dot X]} + 2 \inner{K}{\grad \sfc(X) V_p} \geq 0
  \end{align*}
  for all $(\dot X, \dot P) \in \tangent_{(X, P)} \desing$ (with representation $(K, V_p)$).
\end{corollary}
\begin{proof}
  Let $\gamma = \inner{(\dot X, \dot P)}{\hess \mfc(X, P)[\dot X, \dot P]}$
  denote the Hessian in quadratic form.
  Propositions~\ref{prop:inner-product} and~\ref{prop:hessian} give
  \begin{align*}
    \gamma &= \inner{K}{\hess \sfc(X)[\dot X] V + M \grad \sfc(X) V_p} + \inner{V_p}{\hess \sfc(X)[\dot X]\transpose U \Sigma + \grad\sfc(X)\transpose M K}\\
           &= \inner{K V\transpose + U \Sigma V_p^\top}{\hess \sfc(X)[\dot X]} + 2 \inner{K}{M \grad \sfc(X) V_p},
  \end{align*}
  where we used the equality $P V_p = V_p$ and grouped the terms.
  Notice that $K V^\top + U \Sigma V_p^\top = \dot X$
  (Definition~\ref{def:tgt-repr}) so we obtain the expression for $\gamma$.
  The first-order optimality conditions (Corollary~\ref{cor:critical-point})
  further imply $M \grad \sfc(X) = \grad \sfc(X)$.
\end{proof}

\subsection{Second-order retractions}\label{subsec:2-retractions}

Second-order retractions are retractions (Definition~\ref{def:retr}) that
satisfy an additional regularity
requirement~\citep[Prop.~5.5.5]{absil2008optimization}.
As they provide better approximations to geodesics, they can be useful both for numerical and theoretical reasons.
In this section we list a couple that are efficiently computable.

To define second-order retractions, we need the following notion.
A curve $c \colon \reals \to \desing$ is also a curve in $\embeddingspace$.
Let $\acc{c}$ denote its (extrinsic) acceleration in $\embeddingspace$.
Further let $\iacc{c}$ denote its (intrinsic, Riemannian) acceleration as a curve on $\desing$.
It holds that $\iacc{c}(t) = \proj_{\tangent_{c(t)} \desing} \acc{c}(t)$.

\begin{definition}\label{def:second-order-retr}
  A retraction $\retr \colon \tangent \desing \to \desing$ is
  \emph{second-order} if for all $(X, P) \in \desing$ and $(\dot X, \dot P) \in
  \tangent_{(X, P)}\desing$ the curve $c(t) = \retr_{(X, P)}(t \dot X, t \dot P)$
  satisfies $\iacc{c}(0) = \zeros$.
\end{definition}

\paragraph{A second-order retraction: metric projection.} For Riemannian
submanifolds of a Euclidean space, a natural way to define the retraction
consists in taking a step in the embedding space and to project back onto the
manifold.
This is the \emph{metric projection retraction}, and it is
second order~\cite[Ex.~23]{absil2012projection}, \citep[\S5.12]{boumal2020introduction}.
More precisely, given a point $(X, P) \in \desing$ and a tangent vector
$(\dot X, \dot P) \in \tangent_{(X, P)}\desing$, metric projection
is
\begin{align}\label{eq:proj-retr}
  \retr_{(X, P)}(\dot X, \dot P) &\in \argmin_{(\bar X, \bar P) \in \desing} \big\|\big(X + \dot X - \bar X, P + \dot P - \bar P\big)\big\|^2,
\end{align}
where the norm is induced by~\eqref{eq:inner-product}.
(The solution is unique for small tangent vectors but not in general: more on this below.)
Algorithm~\ref{alg:metric-retr} describes an efficient procedure with complexity
$O((m + n)r^2)$ to compute a representation $(\bar U, \bar \Sigma, \bar V)$
for a solution to~\eqref{eq:proj-retr}.
We justify this with a lemma then a proposition.

\newcommand{\ccc}{h}
\begin{lemma}\label{lemma:opt-retr-handy}
  Given $A \in \reals^{m \times n}$ and $B \in \sym(n)$, consider the cost function
  \begin{align}\label{eq:opt-retr-handy}
    \ccc(U, \Sigma, V) = \|U \Sigma V^\top - A\|_\frob^2 + \alpha\|VV^\top - B\|_\frob^2
  \end{align}
  on the set $\stiefel(m, r) \times \reals^{r \times r} \times \stiefel(n, r)$.
  Define $C = \frac{1}{2}A\transpose A + \alpha B \in \sym(n)$ and let $V \in \stiefel(n, r)$ contain $r$ eigenvectors of $C$ associated with its $r$ largest eigenvalues.
  Let $A V = U \Sigma H^\top$ be a thin SVD.
  Then $(U, \Sigma, V H)$ is a minimizer of $\ccc$.
\end{lemma}
\begin{proof}
  With $C = \frac{1}{2}A\transpose A + \alpha B$, we can rewrite $h$ as
  \begin{align*}
    \ccc(U, \Sigma, V) &= \|U\Sigma - AV\|_\frob^2 + \|A(I - VV^\top)\|_\frob^2 + \alpha\|VV^\top - B\|_\frob^2\\
                       &= \|U \Sigma - AV\|_\frob^2 + \|VV^\top - C\|_\frob^2 + \|A\|_\frob^2 + \alpha\|B\|_\frob^2 - \|C\|_\frob^2 + (\alpha - 1)r.
  \end{align*}
  The second equality is because $\|A(I - VV^\top)\|_\frob^2 = \|A\|_\frob^2 -
  \inner{A\transpose A}{VV^\top}$, $\|VV^\top - B\|_\frob^2 = \|B\|_\frob^2 - 2
  \inner{B}{VV^\top} + r$ and $\|VV^\top - C\|_\frob^2 = \|C\|_\frob^2 - 2
  \inner{C}{VV^\top} + r$.
  Observe that the first term is nonnegative in the expression of $h$ above.
  It follows that
  \begin{equation*}
    \min_{U, \Sigma, V} h(U, \Sigma, V) \geq \min_{V \in \stiefel(n, r)} \|VV^\top - C\|_\frob^2 + \|A\|_\frob^2 + \alpha\|B\|_\frob^2 - \|C\|_\frob^2 + (\alpha - 1)r.
  \end{equation*}
  Let $V$, $U$, $\Sigma$ and $H$ be as prescribed in the lemma statement.
  Then the cost $h(U, \Sigma, VH)$ is equal to the right-hand side above so $(U, \Sigma, VH)$ minimizes $h$.
\end{proof}

\begin{proposition}\label{prop:metric-retr}
  Let $(X, P) \in \desing$ and $(\dot X, \dot P) \in \tangent_{(X, P)}\desing$
  have representations $(U, \Sigma, V)$ and $(K, V_p)$.
  Define $D \in \sym(2r)$ as
  \begin{align}\label{eq:retr-D}
      D =
      \begin{bmatrix}
        \Sigma^2 + \Sigma U\transpose K + K\transpose U \Sigma + K\transpose K + 2\alpha I & \Sigma^2 + K\transpose U \Sigma + 2\alpha I\\
        \Sigma^2 + \Sigma U\transpose K + 2\alpha I & \Sigma^2
      \end{bmatrix}
      .
  \end{align}
  Then Algorithm~\ref{alg:metric-retr} computes a metric projection retraction~\eqref{eq:proj-retr} at $(X, P)$ in direction $(\dot X, \dot P)$.
\end{proposition}
\begin{proof}
  Apply Lemma~\ref{lemma:opt-retr-handy} to $A = X + \dot X$ and $B = I - P - \dot P$.
  The matrix $C = \frac{1}{2}A\transpose A + \alpha B$ satisfies
  \begin{align*}
    2C = (X + \dot X)\transpose (X + \dot X) + 2\alpha(I - P - \dot P) = \begin{bmatrix} V & V_p \end{bmatrix} D \begin{bmatrix} V & V_p \end{bmatrix}^\top,
  \end{align*}
  where $D$ is defined in~\eqref{eq:retr-D}.
  It shows that Algorithm~\ref{alg:metric-retr} computes a minimizer of~\eqref{eq:opt-retr-handy}, and it is a metric projection by definition~\eqref{eq:proj-retr}.
\end{proof}

\TODOF{Check article of \cite{vandereycken2013low} to see if it's better (for
  numerical stability) to do QR on the concatenation $(V V_p)$ rather than on
  $V_p$ alone.
  It seems like Bart doesn't do that (see Algorithm 6 in his article).}

\begin{algorithm}[t]\caption{Metric projection retraction}\label{alg:metric-retr}
  \begin{algorithmic}[1]
    \State \textbf{Input:} A point $(X, P)$ in $\desing$ with representation
    $(U, \Sigma, V)$ and a tangent vector $(\dot X, \dot P)$ in $\tangent_{(X,
      P)}\desing$ with representation $(K, V_p)$.
    \State Compute a thin $\mathrm{QR}$ decomposition $\begin{bmatrix} V &
      V_p \end{bmatrix} = QR$.
    \TODOF{We could compute a QR of $V_p$ only but the resulting algorithm
      description is not as compact.}
    \State Construct $D$ as in~\eqref{eq:retr-D}.
    \State Find the top $r$ eigenvectors $\tilde{U} \in \stiefel(2r, r)$ of the
    matrix $RDR^\top$.
    \State Compute $\tilde V = Q\tilde{U}$.
    \State Compute a thin SVD $\bar U \bar \Sigma H^\top$ of $(X + \dot X) \tilde V = (U
    \Sigma  + K)V\transpose \tilde V + U \Sigma V_p\transpose \tilde V$.
    \State \textbf{Output:} The triplet $(\bar U, \bar \Sigma, \bar V)$ where $\bar V = \tilde V H$.
  \end{algorithmic}
\end{algorithm}

  The metric projection retraction is not uniquely defined everywhere (and hence
  is not smooth everywhere).
  To see this, consider a point with representation $(U, \Sigma, V)$ and a
  tangent vector with representation $(K, V_\perp L)$.
  Choose $\Sigma = I_r$, $K = -(2 \alpha + 1) U$ and $L = (4 \alpha^2 + 2 \alpha)^{\sfrac{1}{2}} J$, where $J \in \reals^{(n - r) \times r}$ contains the first $r$ columns of $I_{n - r}$ (assuming $r \leq n - r$).
  Then the matrix $C$ defined in the proof of Proposition~\ref{prop:metric-retr} is
  \begin{align*}
    C = 2\alpha(2 \alpha + 1) \begin{bmatrix} V & V_\perp \end{bmatrix}
    \begin{bmatrix} I_{2r} & \zeros\\ \zeros & \zeros \end{bmatrix}
    \begin{bmatrix} V & V_\perp \end{bmatrix}^\top.
  \end{align*}
  The top $2r$ eigenvalues of $C$ are the same, so the metric projection is not unique (see Lemma~\ref{lemma:opt-retr-handy}).
  This motivates the next part.

\paragraph{A globally smooth second-order retraction.}

We proceed to derive a computable second-order retraction that is smoothly
defined on the whole tangent bundle.
It is based on the polar retraction for the Grassmann manifold~\citep[Example~9.34]{boumal2020introduction} (as opposed to the Q-factor example in Section~\ref{subsec:retractions}).
\TODOF{SIAM version: no proof. Refer to Arxiv.}
The proof is provided in Appendix~\ref{sec:second-order-retractions}.

\begin{proposition}\label{prop:global-retr}
  Let $(X, P) \in \desing$ and $(\dot X, \dot P) \in \tangent_{(X, P)} \desing$
  be represented by $(U, \Sigma, V)$ and $(K, V_p)$.
  Define $Z = V + V_p\big(I_r - K\transpose U \Sigma \sfactor(\alpha)^{-1}\big)$ and
  the map
  \begin{align*}
    \retr_{(X, P)}(\dot X, \dot P) = \big( (X + \dot X)Q Q^\top, I - Q Q^\top \big) && \text{where} && Q = Z (Z\transpose Z)^{-1/2}.
  \end{align*}
  Then $\retr$ is a second-order retraction.
\end{proposition}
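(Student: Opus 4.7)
The plan is to verify the two conditions of a retraction (Definition~\ref{def:retr}) and then the vanishing of the intrinsic acceleration (Definition~\ref{def:second-order-retr}). Preliminarily, the output lies in $\desing$: $Q$ has orthonormal columns by construction of the polar factor, so $\bar P = I - QQ^\top$ is an orthogonal projector of rank $n-r$, and $\bar X \bar P = (X+\dot X)QQ^\top(I-QQ^\top) = 0$. Smoothness near the zero section holds because $Z(0)^\top Z(0) = I$, so $(Z^\top Z)^{-1/2}$ is well defined and smooth in a neighborhood.

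Set $c(t) = \retr_{(X,P)}(t\dot X, t\dot P)$, so that $Z(t) = V + tV_p - t^2 V_p K^\top U\Sigma \sfactor(\alpha)^{-1}$ and $Q(t) = Z(t)(Z(t)^\top Z(t))^{-1/2}$. At $t=0$, $Q(0) = V$, and the identity $X = X(I-P) = XVV^\top$ yields $c(0) = (X, P)$. Because $V^\top V_p = 0$, the linear term of $Z(t)^\top Z(t)$ vanishes at $0$, so $Q'(0) = Z'(0) = V_p$. A short calculation using $V^\top V_p = 0$ confirms $c'(0) = (KV^\top + U\Sigma V_p^\top, -V_pV^\top - VV_p^\top) = (\dot X, \dot P)$, matching Definition~\ref{def:tgt-repr}.

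The main work is the second-order condition $\proj_{(X,P)}(\acc{\bar X}(0), \acc{\bar P}(0)) = 0$. Expanding $(Z^\top Z)^{-1/2}$ to second order yields $Q''(0) = -VV_p^\top V_p - 2V_p K^\top U\Sigma \sfactor(\alpha)^{-1}$, from which I would compute $\acc{\bar P}(0) = -(QQ^\top)''(0)$ and $\acc{\bar X}(0) = 2\dot X (QQ^\top)'(0) + X(QQ^\top)''(0)$. After simplification using $V^\top V_p = 0$, the first component reduces to $\acc{\bar X}(0) = 2MKV_p^\top$, where $M = I - U\Sigma^2 \sfactor(\alpha)^{-1} U^\top$ is as in Proposition~\ref{prop:hessian}. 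Applying the projection formula of Proposition~\ref{prop:proj}, the $K$-representation $\acc{\bar X}(0)V = 2MKV_p^\top V$ vanishes directly because $V_p^\top V = 0$.

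The $V_p$-representation requires the key cancellation and is the main obstacle. Using the algebraic identity $I - \Sigma^2 \sfactor(\alpha)^{-1} = 2\alpha \sfactor(\alpha)^{-1}$ from~\eqref{eq:sfactor}, one gets $M U\Sigma = 2\alpha U\Sigma\sfactor(\alpha)^{-1}$ and hence $\acc{\bar X}(0)^\top U\Sigma = 4\alpha V_p K^\top U\Sigma \sfactor(\alpha)^{-1}$. Meanwhile, the terms added to $\acc{\bar P}(0)$ by the correction factor $-K^\top U\Sigma \sfactor(\alpha)^{-1}$ contribute $2V_p K^\top U\Sigma \sfactor(\alpha)^{-1}$ to $P\acc{\bar P}(0)V$, the other added terms being killed by $P$ via $PV = 0$. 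These combine so that $P(\acc{\bar X}(0)^\top U\Sigma - 2\alpha \acc{\bar P}(0) V)\sfactor(\alpha)^{-1} = 0$, which is exactly $\bar V_p = 0$ in the projection formula. This completes the second-order check; without the specific correction inside $Z$, this last cancellation would fail, showing that the correction factor is engineered precisely for this purpose.
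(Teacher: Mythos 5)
Your proposal is correct and follows essentially the same route as the paper's proof: expand the polar factor to second order to get $Q''(0)=-VV_p^\top V_p - 2V_pK^\top U\Sigma\sfactor(\alpha)^{-1}$, compute the extrinsic accelerations, and kill the intrinsic acceleration via the projection formula of Proposition~\ref{prop:proj} (the paper merely packages this last step as Lemma~\ref{lemma:cond-zero-acceleration}). The key identities you state --- $\acc{\bar X}(0)=2MKV_p^\top$, $MU\Sigma = 2\alpha U\Sigma\sfactor(\alpha)^{-1}$, and the resulting $4\alpha$ versus $2\cdot 2\alpha$ cancellation in the $V_p$-component --- all check out.
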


In Proposition~\ref{prop:global-retr}, the matrix $Z \in \reals^{n \times r}$ always has rank $r$, so the retraction is well-defined on all of $\tangent \desing$ (unlike the metric projection retraction).
It is also possible to compute a representation of the retracted point in $O((m
+ n)r^2)$ operations.
Indeed, building the matrices $Z$ and $Q$ takes $O((m + n)r^2)$ operations.
Then building the matrix $W = (X + \dot X) Q = (U \Sigma + K) V\transpose Q + (U
\Sigma) V_p\transpose Q$ takes $O((m + n)r^2)$ operations.
Now let $W = \bar U \bar \Sigma H^\top \in \reals^{m \times r}$ be a thin SVD,
which can be computed in $O(mr^2)$ operations.
Then $(\bar X, \bar P) = \retr_{(X, P)}(\dot X, \dot P)$ has representation
$(\bar U, \bar \Sigma, \bar V)$, where $\bar V = Q H$.

\section{Convergence guarantees}\label{sec:conv-guarantees}

In this section we use the desingularization to obtain convergence
guarantees for low-rank optimization.
Again, $\sfc \colon \reals^{m \times n} \to \reals$ is the function we want to
minimize over $\boundedrank$---that is problem~\eqref{eq:problem}---and
$\mfc = \sfc \circ \lift$ is the lifted function over $\desing$.
The reparameterized optimization problem is hence
\begin{align}\label{eq:lifted-problem}\tag{Q}
  \min_{(X, P) \in \desing}\, \mfc(X, P) && \textrm{ with } && \mfc(X, P) = \sfc(X).
\end{align}

We first establish a global convergence theorem.
Specifically, we derive algorithms whose iterates accumulate only at stationary points of~\eqref{eq:problem}.
This is not straightforward because the feasible set $\boundedrank$ is nonsmooth.
Consequently, a sequence of points in $\boundedrank$ may converge to a non-stationary point, even when the stationarity measure of the sequence converges to zero.
\cite{levin2023finding} exhibited such sequences generated by the algorithm P$^2$GD~\citep{schneider2015convergence}.
They also designed second-order algorithms that provably accumulate only at stationary points of~\eqref{eq:problem} using reparameterizations through various $\varphi$ (including the desingularization).
\cite{themelis2018forward,olikier2023apocalypse,pauwels2024generic,olikier2025low}, and~\cite{olikier2025projected} obtained guarantees for first-order algorithms, including projected gradient descent.

We propose returning to the reparameterization approach, here with the desingularization, because it allows for second-order algorithms.
As we detail later, these methods further enjoy fast local convergence.

The global convergence theorem below is a variant of~\cite[Thm.~1.1 \& Ex.~3.13]{levin2023finding}.
The argument is the same, but here it is applied to any \emph{descent} algorithm, that is, one that generates sequences of iterates with non-increasing function values.

\begin{theorem}[Global convergence]\label{th:global}
  For $(X_0, P_0) \in \desing$, suppose $\{X \in \boundedrank : \sfc(X)
  \leq \sfc(X_0)\}$ is compact.
  Then $\{(X, P) \in \desing : \mfc(X, P) \leq \mfc(X_0, P_0)\}$ is also
  compact.
  In particular, \emph{descent} algorithms initialized at $(X_0, P_0)$
  generate sequences $\sequence{(X_k, P_k)}$ that accumulate to at least one point.
  Furthermore, if $\sfc$ is $\smooth{2}$ and all accumulation points of
  $\sequence{(X_k, P_k)}$ are second-order critical
  for~\eqref{eq:lifted-problem} then all accumulation points of $\sequence{X_k}$
  are stationary for~\eqref{eq:problem}.
\end{theorem}

Many standard algorithms satisfy the descent condition, such as gradient descent
(with line search or appropriate constant step size) and trust-region methods.
Except in some pathological cases, they also accumulate only at
\emph{second}-order critical points.

We additionally aim for fast local convergence.
In general, the minimizers of~\eqref{eq:lifted-problem} are not isolated.
This is because the fiber $\lift^{-1}(X)$ of a local minimizer $X$ of~\eqref{eq:problem} is a continuum of local minimizers of~\eqref{eq:lifted-problem} whenever $\rank X < r$ (see Proposition~\ref{prop:submanifold-preimage}).
Consequently, we cannot assume local strong convexity.
In the literature, it is typical to resort instead to the \polyakloja{} (\pl{}) condition (see Definition~\ref{def:pl} below),
which requires the squared gradient norm to locally dominate the relative function value around local minimizers.
In fact, \pl{} is necessary for fast local convergence of gradient descent~\citep[Thm.~5]{abbaszadehpeivasti2023conditions}.

Accordingly, we want to obtain \pl{} for $\mfc$ based on hypotheses on $\sfc$.
This is nontrivial because the set $\boundedrank$ is nonsmooth.
The \pl{} condition is not even well defined for~\eqref{eq:problem} and we must assume something different.

For this reason, we rely on the \morsebott{} (\mb{}) property.
It is locally equivalent to \pl{} on manifolds and requires positive definiteness of the Hessian along the normal space to the set of minimizers.
In Section~\ref{subsec:local-conv-pl} we define the variant~\eqref{eq:mb-type},
a type of \mb{} condition for $\sfc$ that takes into account the nonsmoothness of $\boundedrank$, allowing us to deduce \pl{} for $\mfc$.

Our results encompass existing local convergence theory for minimizers of maximal rank.
They also enable fast convergence to minimizers of non-maximal rank.
As an illustration, if $\hess \sfc(X) \succ \zeros$ at a minimizer $X \in \boundedrank$ of~\eqref{eq:problem} then $\sfc$ satisfies~\eqref{eq:mb-type} at $X$.
In this simple case, the minimizer is (locally) unique, but the definition of~\eqref{eq:mb-type} allows for more complicated sets of minimizers.
See also Remark~\ref{rmk:feasibility-of-mb-type}.

\begin{theorem}[Local convergence]\label{th:local}
  Let $X$ be a local minimizer of~\eqref{eq:problem}.
  Suppose $\sfc$ is $\smooth{2}$ and satisfies the \morsebott{} type
  property~\eqref{eq:mb-type} at $X$.
  Then $\mfc$ satisfies \pl{} around $\lift^{-1}(X)$.
\end{theorem}

Many standard algorithms have fast local rates when \pl{} holds.
This includes gradient descent (linear), regularized Newton (quadratic) and
trust-regions (superlinear).
See~\citep{polyak1963gradient,nesterov2006cubic} for gradient descent and
regularized Newton in the Euclidean case,
and~\citep{rebjock2024fast,rebjock2024tcg} for trust-regions and extensions
to Riemannian manifolds as is needed here.

\subsection{Points of interest of the two problems}\label{subsec:points-of-interest}

To study convergence guarantees we must understand how the points of interest
of~\eqref{eq:problem} and~\eqref{eq:lifted-problem} relate.
The parameterization $\lift$ may indeed induce spurious critical points, that
is, points $(X, P) \in \desing$ that are critical for~\eqref{eq:lifted-problem}
but such that $\lift(X, P) = X$ is not stationary for~\eqref{eq:problem}.
However, there exist some (already known) relations that we briefly mention
here.

If $X \in \boundedrank$ is a stationary point (respectively a local
minimizer) for~\eqref{eq:problem} then the fiber $\lift^{-1}(X)$ is composed of
critical points (respectively local minimizers) for~\eqref{eq:lifted-problem}: this
is so because $\lift$ is smooth.

Conversely, a critical point (respectively a local minimizer)
for~\eqref{eq:lifted-problem} may not be a stationary point (respectively a local minimizer)
for~\eqref{eq:problem}.
(This is partly because $\lift$ is not
open~\citep[Prop.~2.37]{levin2020towards}.)
However, the following known facts provide welcome guarantees:
\begin{itemize}
\item Let $(X, P) \in \desing$ with $\rank X = r$.
  If $(X, P)$ is a critical point (respectively a second-order critical point, a local minimizer) for~\eqref{eq:lifted-problem} then $X$ is a stationary point (respectively a second-order stationary point, a local minimizer) for~\eqref{eq:problem}.
\item If $(X, P)$ is second-order critical for~\eqref{eq:lifted-problem} then $X$ is stationary for~\eqref{eq:problem}.
\end{itemize}
See~\cite[\S2.3]{levin2020towards} and~\cite[Prop.~2.9]{levin2025effect} for proofs.
These facts are analogous to the ones established by~\citet{ha2020equivalence} for the LR parameterization.
In some cases, these properties can be used to deduce the strict saddle property for the LR parameterization, and therefore we expect the same should be true for the desingularization.

\subsection{Global convergence}\label{subsec:global-conv}

This section provides the ingredients at the basis of Theorem~\ref{th:global}.

\paragraph{Sublevel sets.}

A notable property of the parameterization $\lift$ is that it preserves
compactness of sublevel sets.
This is not the case for the parameterization $(L, R) \mapsto LR^\top$ because its fibers are
unbounded.

\begin{proposition}
  If~\eqref{eq:problem} has compact sublevel sets then~\eqref{eq:lifted-problem}
  also has compact sublevel sets.
\end{proposition}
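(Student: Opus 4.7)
The plan is to observe that the sublevel sets of $\mfc$ on $\desing$ are exactly the preimages under $\lift$ of the sublevel sets of $\sfc$ on $\boundedrank$, and then invoke properness of $\lift$ to transport compactness. Since $\mfc = \sfc \circ \lift$, for any level $c \in \reals$ we have the identity
\begin{align*}
  \{(X, P) \in \desing : \mfc(X, P) \leq c\} = \lift^{-1}\!\big(\{X \in \boundedrank : \sfc(X) \leq c\}\big).
\end{align*}
The set on the right-hand side inside $\lift^{-1}$ is compact by the hypothesis on~\eqref{eq:problem}.

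Next, I would invoke Proposition~\ref{prop:lift-proper}, which states that $\lift$ is proper, meaning that the preimage of any compact subset of $\boundedrank$ is compact in $\desing$. Applying this to the compact sublevel set of $\sfc$ immediately gives that the corresponding sublevel set of $\mfc$ is compact in $\desing$, as desired.

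There is no real obstacle here: all the work was done earlier. The crucial point—that $\lift$ is proper—was established in Proposition~\ref{prop:lift-proper} by noting that $\lift^{-1}(\sanifold) \subseteq \sanifold \times \grassmann(n, n-r)$, which is a compact superset when $\sanifold$ is compact. This is precisely the property that distinguishes the desingularization from parameterizations like $(L, R) \mapsto L R^\top$, whose fibers are unbounded and therefore fail to preserve compactness of sublevel sets. The proof is thus a one-line application of properness.
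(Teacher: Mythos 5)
Your proof is correct and is essentially identical to the paper's: both identify the sublevel set of $\mfc$ as $\lift^{-1}$ of the corresponding sublevel set of $\sfc$ and then apply the properness of $\lift$ from Proposition~\ref{prop:lift-proper}. Nothing is missing.
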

\begin{proof}
  For all $\rho$, $\mfc^{-1}\big(\interval[open left]{-\infty}{\rho}\big) =
  \lift^{-1}\big(\sfc^{-1}\big(\interval[open left]{-\infty}{\rho}\big)\big)$ and
  $\lift$ is proper (Proposition~\ref{prop:lift-proper}).
\end{proof}

We deduce Theorem~\ref{th:global} from this and standard compactness arguments.
Indeed, iterates of descent algorithms stay in the initial sublevel
set.
If it is compact, they also accumulate to at least one point.
Section~\ref{subsec:points-of-interest} further provides the stationarity
guarantees.
This type of argument appears in~\cite[Thm.~3.5]{levin2023finding} for
trust-region methods in the context of low-rank optimization.
We cannot say the same for the LR parameterization because it is \emph{a priori}
not possible to ensure that the iterates remain in a bounded domain
(unless we modify the algorithm, e.g., with rebalancing).

\paragraph{Lipschitz continuity.}

Lipschitz properties of the cost function are often instrumental to obtain \emph{descent} algorithms and to ensure that iterates accumulate at first- or second-order critical points.
When $\mfc$ is $\smooth{2}$, standard continuity arguments readily give that $\grad \mfc$ is Lipschitz continuous in any compact domain.
The same is true for the Hessian $\hess \mfc$ when $\mfc$ is $\smooth{3}$.
It is possible to bound the Lipschitz constants and gain some control.
We now study $\grad \mfc$ and $\hess \mfc$ for this purpose.
\TODOF{We removed the proofs from the SIAM version.}

\begin{proposition}
  The bound $\|\grad \mfc(X, P)\| \leq \|\grad \sfc(X)\|_\frob$ holds for all
  $(X, P) \in \desing$.
\end{proposition}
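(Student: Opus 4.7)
The plan is to exploit the fact that $\desing$ is equipped with the Riemannian structure obtained by restricting the ambient inner product $\inner{\cdot}{\cdot}_\alpha$ on $\embeddingspace$ to each tangent space (Section~\ref{subsec:metric}). As a Riemannian submanifold, the Riemannian gradient of $\mfc$ is the orthogonal projection of the Euclidean gradient of its extension. Concretely, from the proof of Proposition~\ref{prop:gradient},
\begin{align*}
  \grad \mfc(X, P) = \proj_{(X, P)}\pig(\grad \sfc(X), \zeros\pig).
\end{align*}

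The key step is the general non-expansiveness of orthogonal projection. Since $\proj_{(X, P)}$ is orthogonal with respect to $\inner{\cdot}{\cdot}_\alpha$, the Pythagorean identity gives
\begin{align*}
  \pig\|(\grad \sfc(X), \zeros)\pig\|^2 = \pig\|\proj_{(X, P)}(\grad \sfc(X), \zeros)\pig\|^2 + \pig\|(\grad \sfc(X), \zeros) - \proj_{(X, P)}(\grad \sfc(X), \zeros)\pig\|^2,
\end{align*}
so in particular $\|\grad \mfc(X, P)\| \leq \|(\grad \sfc(X), \zeros)\|$.

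To finish, I would evaluate the right-hand side using the definition~\eqref{eq:inner-product}: $\|(\grad \sfc(X), \zeros)\|^2 = \|\grad \sfc(X)\|_\frob^2 + \alpha \|\zeros\|_\frob^2 = \|\grad \sfc(X)\|_\frob^2$, and take square roots. There is no real obstacle here; the only thing worth flagging is that the bound is independent of the parameter $\alpha > 0$, which is a mild sanity check since the second slot of $(\grad \sfc(X), \zeros)$ is zero and thus the $\alpha$-weight plays no role. (As an aside, one could verify the same bound directly from the explicit representation in Proposition~\ref{prop:gradient} by computing $\|K\|_\frob^2 + \|V_p \sfactor(\alpha)^{1/2}\|_\frob^2$ and noting the first term equals $\|\grad\sfc(X) V\|_\frob^2$ while the second equals $\Tr\!\big(\Sigma^2 \sfactor(\alpha)^{-1} U^\top \grad\sfc(X) P \grad\sfc(X)^\top U\big)$, with $\Sigma^2 \sfactor(\alpha)^{-1} \preceq I$; summing the two traces recovers $\|\grad\sfc(X)\|_\frob^2$ only up to the projection onto $U U^\top + P$, which bounds it as claimed. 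But the projection argument is strictly cleaner.)
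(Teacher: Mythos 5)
Your proof is correct and follows essentially the same route as the paper: the paper's one-line argument is precisely that $\grad \mfc(X, P) = \proj_{(X, P)}(\grad \sfc(X), \zeros)$ (from the proof of Proposition~\ref{prop:gradient}) combined with non-expansiveness of the orthogonal projection, and $\|(\grad \sfc(X), \zeros)\|^2 = \|\grad \sfc(X)\|_\frob^2$ regardless of $\alpha$. Your aside via the explicit representation also checks out, but the projection argument is what the paper uses.
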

\begin{proof}
  This follows from $\grad \mfc(X, P) = \proj_{(X, P)}(\grad \sfc(X), \zeros)$:
  see proof of Proposition~\ref{prop:gradient}.
\end{proof}

In particular, if $\sfc$ is $L$-Lipschitz continuous on $\reals^{m \times n}$ then $\mfc$ is $L$-Lipschitz continuous on $\desing$.
We further obtain a bound for the Hessian.

\begin{proposition}
  Assume $\sfc$ is $\smooth{2}$ on a neighborhood of $\boundedrank$.
  For all $(X, P) \in \desing$ the Hessian is bounded as
  \begin{align*}
    \|\hess \mfc(X, P)\|_\opnorm &\leq \|\hess \sfc(X)\|_\opnorm + \big(2\alpha + \sigma_r(X)^2\big)^{-\frac{1}{2}}\|\grad \sfc(X) P\|_2.
  \end{align*}
\end{proposition}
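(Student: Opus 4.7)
The plan is to bound the quadratic form $|\inner{(\dot X, \dot P)}{\hess \mfc(X, P)[\dot X, \dot P]}|$ for an arbitrary tangent vector $(\dot X, \dot P)$ and then take the supremum over unit-norm tangent vectors. The natural starting point is the explicit expression from Corollary~\ref{cor:hess-inner-product}, namely $\inner{\dot X}{\hess \sfc(X)[\dot X]} + 2\inner{K}{M \grad \sfc(X) V_p}$, which splits into an ``intrinsic'' term and a ``cross'' term to be bounded separately.

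For the intrinsic term, I would use $|\inner{\dot X}{\hess \sfc(X)[\dot X]}| \leq \|\hess \sfc(X)\|_\opnorm \|\dot X\|_\frob^2$ and then show $\|\dot X\|_\frob^2 \leq \|(\dot X, \dot P)\|^2$. From Definition~\ref{def:tgt-repr}, $\dot X = KV^\top + U\Sigma V_p^\top$, and since $V^\top V_p = \zeros$ these two summands are Frobenius-orthogonal, giving $\|\dot X\|_\frob^2 = \|K\|_\frob^2 + \|V_p \Sigma\|_\frob^2$. Because $\sfactor(\alpha) = 2\alpha I + \Sigma^2 \succeq \Sigma^2$, we get $\|V_p \Sigma\|_\frob^2 \leq \|V_p \sfactor(\alpha)^{1/2}\|_\frob^2$, and comparing with Proposition~\ref{prop:inner-product} yields $\|\dot X\|_\frob \leq \|(\dot X, \dot P)\|$.

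For the cross term, I would apply \cauchyschwarz{} in the Frobenius inner product and chain three bounds. First, $\|M\|_\opnorm \leq 1$: on the range of $U$, the eigenvalues of $M = I - U\Sigma^2 \sfactor(\alpha)^{-1} U^\top$ are $2\alpha/(2\alpha + \sigma_i(X)^2) \in (0, 1]$, and on the orthogonal complement they equal $1$. Second, since $V^\top V_p = \zeros$ and $P = I - VV^\top$, we have $PV_p = V_p$, so $\grad \sfc(X) V_p = \grad \sfc(X) P V_p$ and hence $\|M \grad \sfc(X) V_p\|_\frob \leq \|\grad \sfc(X) P\|_2 \|V_p\|_\frob$. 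Third, since the smallest diagonal entry of $\sfactor(\alpha)$ is $2\alpha + \sigma_r(X)^2$, we obtain $\|V_p\|_\frob \leq (2\alpha + \sigma_r(X)^2)^{-1/2} \|V_p \sfactor(\alpha)^{1/2}\|_\frob$. Combining these and invoking $2ab \leq a^2 + b^2$ with $a = \|K\|_\frob$ and $b = \|V_p \sfactor(\alpha)^{1/2}\|_\frob$ yields $2|\inner{K}{M \grad \sfc(X) V_p}| \leq (2\alpha + \sigma_r(X)^2)^{-1/2} \|\grad \sfc(X) P\|_2 \|(\dot X, \dot P)\|^2$.

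Summing the two estimates and taking the supremum over unit tangent vectors gives the claim. The only delicate point is making sure the norm comparisons line up: the Euclidean Frobenius norm naturally appears for the Hessian of $\sfc$, while the Riemannian norm depends on $\alpha$ through $\sfactor(\alpha)$. The diagonal structure of $\sfactor(\alpha) = 2\alpha I + \Sigma^2$ makes this bookkeeping direct, and the factor $(2\alpha + \sigma_r(X)^2)^{-1/2}$ arises cleanly from bounding $\|V_p\|_\frob$ by $\|V_p \sfactor(\alpha)^{1/2}\|_\frob$ via the smallest singular value of $\sfactor(\alpha)^{1/2}$.
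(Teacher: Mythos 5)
Your proposal is correct, and it takes a genuinely different (and shorter) route than the paper. The paper works with the full representation $(\bar K, \bar V_p)$ of $\hess \mfc(X,P)[\dot X,\dot P]$ from Proposition~\ref{prop:hessian} and bounds $\|\hess\mfc(X,P)[\dot X,\dot P]\|$ directly, which requires splitting the squared norm into six terms $a_1,a_2,a_3,b_1,b_2,b_3$, pairing them as $a_i+b_i$, and applying \cauchyschwarz{} twice to reassemble $\big(\sqrt{a_1+b_1}+\sqrt{a_3+b_3}\big)^2$. You instead start from the quadratic form in Corollary~\ref{cor:hess-inner-product}, where the cross terms have already collapsed to the single term $2\inner{K}{M\grad\sfc(X)V_p}$, and your individual estimates ($\|\dot X\|_\frob\le\|(\dot X,\dot P)\|$ via orthogonality of $KV^\top$ and $U\Sigma V_p^\top$; $\|M\|_2\le 1$; $PV_p=V_p$; $\sfactor(\alpha)\succeq(2\alpha+\sigma_r(X)^2)I$; and $2ab\le a^2+b^2$) are all valid and yield exactly the stated constant. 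The one step you should make explicit is that passing from a bound on the quadratic form to a bound on $\|\hess\mfc(X,P)\|_\opnorm$ uses self-adjointness of the Riemannian Hessian with respect to the metric $\inner{\cdot}{\cdot}_\alpha$ (true here since $\mfc$ is $\smooth{2}$); the paper's vector-norm argument does not need this, and it additionally delivers a bound on the Hessian-vector product itself, but for the stated operator-norm claim your argument is complete and cleaner.
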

\begin{proof}
  Pick $(X, P) \in \desing$ and $(\dot X, \dot P) \in \tangent_{(X, P)} \desing$
  with representations $(U, \Sigma, V)$ and $(K, V_p)$.
  Proposition~\ref{prop:hessian} gives that $\hess \mfc(X, P)[\dot X, \dot P]$
  has representation $(\bar K, \bar V_p)$ where
  \begin{align*}
      \bar K = \hess\sfc(X)[\dot X] V + M \grad\sfc(X)V_p && \text{and} && \bar V_p = P\pig(\hess\sfc(X)[\dot X]\transpose U \Sigma + \grad\sfc(X)\transpose M K\pig) \sfactor(\alpha)^{-1},
  \end{align*}
  with $M = I - U\Sigma^2\sfactor(\alpha)^{-1} U^\top$.
  We aim to bound $\|\hess \mfc(X, P)[\dot X, \dot P]\|_\frob^2 =
  \|\bar K\|_\frob^2 + \|\bar V_p \sfactor(\alpha)^{1/2}\|_\frob^2$.
  First note that $\|\bar K\|_\frob^2 = a_1 + a_2 + a_3$ where
  \begin{align*}
    a_1 = \|\hess \sfc(X)[\dot X] V\|_\frob^2, && a_2 = 2\inner{\hess \sfc(X)[\dot X] V}{M \grad \sfc(X) V_p}, && \text{and} && a_3 = \|M \grad \sfc(X) V_p\|_\frob^2.
  \end{align*}
  Moreover we have $\|\bar V_p \sfactor(\alpha)^{1/2}\|_\frob^2 = b_1 + b_2 +
  b_3$ where
  \begin{align*}
    b_1 = \|P \hess \sfc(X)[\dot X]\transpose U \Sigma \sfactor(\alpha)^{-1/2}\|_\frob^2, && b_3 = \|P \grad\sfc(X)\transpose M K \sfactor(\alpha)^{-1/2}\|_\frob^2,
  \end{align*}
  and $b_2 = 2\inner{\hess \sfc(X)[\dot X]\transpose U \Sigma}{P \grad \sfc(X)^\top
    M K \sfactor(\alpha)^{-1}}$.
  We compute that
  \begin{align*}
    a_1 + b_1 &= \|\hess \sfc(X)[\dot X] (I - P)\|_\frob^2 + \|P \hess \sfc(X)[\dot X]\transpose U \Sigma \sfactor(\alpha)^{-1/2} U^\top\|_\frob^2\\
              &\leq \|\hess \sfc(X)[\dot X] (I - P)\|_\frob^2 + \|P \hess \sfc(X)[\dot X]^\top\|_\frob^2\\
              &= \|\hess \sfc(X)[\dot X]\|_\frob^2  \leq \|\hess \sfc(X)\|_\opnorm^2 \|\dot X\|_\frob^2,
  \end{align*}
  where the first inequality holds because $\|U \Sigma \sfactor(\alpha)^{-1/2}
  U^\top\|_2 \leq 1$ (recall the definition of $\sfactor$~\eqref{eq:sfactor})
  and using the bound $\|AB\|_\frob \leq \|A\|_2 \|B\|_\frob$ for matrix norms.
  Additionally,
  repeatedly using the latter bound
  and the fact that $\|M\|_2 \leq 1$,
  we find
  \begin{align*}
    a_3 + b_3 &= \|M \grad \sfc(X) P V_p\|_\frob^2 + \|P \grad \sfc(X)\transpose M K\sfactor(\alpha)^{-1/2}\|_\frob^2\\
              &\leq \big( \|V_p\|_\frob^2 + \|K \sfactor(\alpha)^{-1/2}\|_\frob^2 \big) \|\grad \sfc(X) P\|_2^2\\
              &\leq \big( \|V_p\|_\frob^2 + \big(2 \alpha + \sigma_r(X)^2\big)^{-1}\|K\|_\frob^2 \big) \|\grad \sfc(X) P\|_2^2\\
              &\leq \big(2 \alpha + \sigma_r(X)^2\big)^{-1} \|\grad \sfc(X) P\|_2^2 \|(\dot X, \dot P)\|^2,
  \end{align*}
  where the last step uses $\|(\dot X, \dot P)\|^2 = \|K\|_\frob^2 + \|V_p
  \sfactor(\alpha)^{1/2}\|_\frob^2$.
  The \cauchyschwarz{} inequality gives that $a_2^2 \leq 4a_1a_3$ and $b_2^2
  \leq 4b_1b_3$.
  It follows that
  \begin{align*}
    \|\bar K\|_\frob^2 + \|\bar V_p \sfactor(\alpha)^{1/2}\|_\frob^2 &= a_1 + b_1 + a_2 + b_2 + a_3 + b_3\\
                                                                     &\leq a_1 + b_1 + 2\big(\sqrt{a_1 a_3} + \sqrt{b_1 b_3}\big) + a_3 + b_3\\
                                                                     &\leq a_1 + b_1 + 2\sqrt{a_1 + b_1}\sqrt{a_3 + b_3} + a_3 + b_3\\
                                                                     &= \big(\sqrt{a_1 + b_1} + \sqrt{a_3 + b_3}\big)^2.
  \end{align*}
  The bound for $\|\hess \mfc(X, P)\|_\opnorm$ follows from this.
\end{proof}

The bound is still finite when $\sigma_r(X) = 0$ thanks to the term in $\alpha$.
In contrast, for a $\smooth{2}$ function on $\reals_r^{m \times n}$, the Riemannian Hessian can blow up as $\sigma_r(X)$ approaches zero.

\subsection{Local convergence with a \pl{}-type condition}\label{subsec:local-conv-pl}

The \polyakloja{} (\pl{}) condition is a standard assumption to derive local
convergence guarantees for optimization
algorithms~\citep{polyak1963gradient,attouch2010proximal,attouch2013convergence,bolte2014proximal,karimi2016linear}.

\begin{definition}\label{def:pl}
  Let $\optpoint$ be a local minimizer of a $\smooth{1}$ function $h \colon \nanifold \to
  \reals$ defined on a smooth manifold $\nanifold$.
  We say $h$ satisfies the \emph{\polyakloja{}} condition with constant
  $\plconstant > 0$ (also denoted \emph{$\plconstant$-\pl{}}) around $\optpoint$ if
  \begin{align}
    h(x) - h(\optpoint) \leq \frac{1}{2\plconstant}\|\grad h(x)\|^2
    \tag{\pl{}}
    \label{eq:pl}
  \end{align}
  for all $x$ in some neighborhood of $\optpoint$, where $\grad h$ is the
  gradient of $h$.
  Likewise, we say $h$ satisfies $\plconstant$-\pl{} around a set $\optimalset$
  of local minimizers if it does so around each $\optpoint \in \optimalset$.
\end{definition}

We seek to obtain the \pl{} condition for $\mfc$ via appropriate assumptions on $\sfc$.
However, it is \emph{a priori} not clear what to assume on $\sfc$ because $\boundedrank$ is nonsmooth.
In particular, \pl{} is not well defined for $\sfc$ around points of non-maximal rank.

To deal with these singular points, we resort to the local \morsebott{} property (which is equivalent to \pl{} for $\smooth{2}$ functions on manifolds), and we propose a version for $\sfc$ that accommodates the nonsmoothness of $\boundedrank$.
As a preliminary, we need the notion of tangent cone.

\paragraph{Tangent cones.}

At a point $X \in \boundedrank$, the \emph{tangent cone} to $\boundedrank$ is defined as
\begin{align*}
  \tangent_X \boundedrank = \bigg\{ \lim_{k \to \infty} \frac{X_k - X}{t_k} : X_k \in \boundedrank, t_k > 0 \text{ for all $k$},\, t_k \to 0 \text{ and the limit exists}\bigg\},
\end{align*}
which is a closed (not necessarily convex) cone.
A simple expression is known for this cone~\cite[Thm.~3.2]{schneider2015convergence}.
At a point $X$ of rank $s$, we have
\begin{equation}\label{eq:tgt-bounded}
  \begin{aligned}
    \tangent_X \reals_{\leq r}^{m \times n} = \bigg\{&
      \begin{bmatrix}
        U_s & U_{s, \perp}
      \end{bmatrix}
      \begin{bmatrix}
        A & B\\ C & D
      \end{bmatrix}
      \begin{bmatrix}
        V_s & V_{s, \perp}
      \end{bmatrix}^\top :\\
    &\qquad \text{$A, B, C, D$ are of appropriate sizes and $\rank(D) \leq r - s$} \bigg\},
  \end{aligned}
\end{equation}
where $U_s \in \stiefel(m, s)$ and $V_s \in \stiefel(n, s)$ span the column and
row spaces of $X$ respectively, and $U_{s, \perp}$ and $V_{s, \perp}$ are
orthonormal completions.
The tangent space $\tangent_X \reals_s^{m \times n}$ has the same set
description as~\eqref{eq:tgt-bounded} with the additional constraint that $D =
0$.

\paragraph{The \morsebott{} property.}

We now define the \morsebott{} property for $\mfc$, and a variant for $\sfc$ tailored to $\boundedrank$.
It requires $\sfc$ and $\mfc$ to be $\smooth{2}$.
We let $\optimalset_\sfc$ and $\optimalset_\mfc$ denote the sets that contain all the local minimizers of~\eqref{eq:problem} and~\eqref{eq:lifted-problem} respectively.

\begin{definition}\label{def:mb}
  Let $(X, P)$ be a local minimizer of $\mfc$.
  We say $\mfc$ satisfies the \emph{\morsebott{} property} at $(X, P)$ with
  constant $\plconstant > 0$ if
  \begin{equation}\label{eq:morse-bott}\tag{$\mb{}_\mfc$}
    \begin{aligned}
      & \optimalset_\mfc \textrm{ is a $\smooth{1}$ submanifold of $\desing$ around } (X, P), \textrm{ and } \\
      & \inner{(\dot X, \dot P)}{\hess \mfc(X, P)[\dot X, \dot P]} \geq \plconstant \|(\dot X, \dot P)\|^2 \qquad \forall (\dot X, \dot P) \in \normal_{(X, P)} \optimalset_\mfc,
    \end{aligned}
  \end{equation}
  where $\optimalset_\mfc$ is the set of local minimizers
  of~\eqref{eq:lifted-problem} and $\normal_{(X, P)} \optimalset_\mfc$ is the
  normal space to $\optimalset_\mfc$ at $(X, P)$.
  (This is with respect to the metric~\eqref{eq:inner-product} on $\tangent_{(X, P)}\desing$.)
\end{definition}

This condition is equivalent to \pl{} for $\smooth{2}$ functions, up to
arbitrarily small loss in the constant $\plconstant$~\citep{rebjock2024fast}.

It is now natural to ask: under what conditions on $\sfc$ is it the case that $\mfc = \sfc \circ \lift$ satisfies~\eqref{eq:morse-bott}?
We suggest one answer: a \morsebott{} type condition with an additional requirement on $\optimalset_\sfc$.
In Section~\ref{subsec:transitivity}, we show the implication $\mb{}_\sfc \Rightarrow \mb{}_\mfc$ (which could fail without the extra condition on $\optimalset_\sfc$).

\begin{definition} \label{def:MBf}
  Let $X$ be a local minimizer for~\eqref{eq:problem} of rank $s$.
  We say $\sfc$ satisfies the \emph{\morsebott{} type property} at $X$ with constant
  $\plconstant > 0$ if
  \begin{equation}\label{eq:mb-type}\tag{$\mb{}_\sfc$}
    \begin{aligned}
      & \optimalset_\sfc \text{ is a $\smooth{1}$ submanifold of $\reals^{m \times n}$ included in $\reals_s^{m \times n}$ around $X$},
        \textrm{ and } \\
      & \inner{\dot X}{\hess \sfc(X)[\dot X]} \geq \plconstant \|\dot X\|_\frob^2 \qquad \forall \dot X \in \normal_X \optimalset_\sfc \cap \tangent_X \boundedrank,
    \end{aligned}
  \end{equation}
  where $\optimalset_\sfc$ is the set of local minimizers of~\eqref{eq:problem} and
  $\normal_{X} \optimalset_\sfc$ is the normal space to $\optimalset_\sfc$ at
  $X$.
  (This is with respect to the Frobenius inner product on $\reals^{m \times n}$.)
\end{definition}

Note the two differences between~\eqref{eq:morse-bott} and~\eqref{eq:mb-type}.
For the latter, \emph{(i)} we require the solution set $\optimalset_\sfc$ to be
included in a constant rank stratum $\reals_s^{m \times n}$, and \emph{(ii)} the
inner product lower bound needs to hold only for matrices $\dot X$ in the
tangent cone at $X$.
This second fact makes the condition depend only on the values that $\sfc$ takes
in $\boundedrank$ (as opposed to depending also on the values of $\sfc$ in a neighborhood of $\boundedrank$ in $\reals^{m \times n}$).
More explicitly, if two functions $\sfc_1$ and $\sfc_2$ coincide on
$\boundedrank$ then the quantities $\inner{\dot X}{\hess \sfc_i(X)[\dot X]}$ are
the same when $\dot X \in \tangent_X \boundedrank$.
(This can be seen using that $\boundedrank$ is geometrically
derivable~\citep[Prop.~2]{o2004limits}.)
This is arguably a desirable property, as it makes our assumption intrinsic.

\subsection{Transitivity of a \pl{}-type condition}\label{subsec:transitivity}

In this section we prove that~\eqref{eq:mb-type} implies~\eqref{eq:morse-bott}.
Theorem~\ref{th:local} is then a direct consequence.
We study the maximal rank stratum and lower-rank strata separately, with different arguments.

\paragraph{Maximal rank stratum.}

We first examine \pl{} on constant rank strata and deduce \pl{} transitivity for the maximal rank stratum.
In this part we only require $\sfc$ (and hence $\mfc$) to be continuously differentiable ($\smooth{1}$).
We derive below an expression for the tangent spaces of constant rank strata, which we need for the analysis.

\begin{lemma}\label{lemma:tgt-stratum}
  Given $s \in \{0, \dots, r\}$, let $(X, P) \in \lift^{-1}(\reals_s^{m \times n})$ have representation $(U, \Sigma, V)$, and let $U_s \in \reals^{m \times s}$ hold the first $s$ columns of $U$.
  Then
  \begin{equation}\label{eq:tgt-stratum}
    \begin{aligned}
      \tangent_{(X, P)} \lift^{-1}(\reals_s^{m \times n})
      &= \Big\{\pig(
        \begin{bmatrix}
          K_s & U_s K_{sr}
        \end{bmatrix}
        V^\top + U \Sigma V_p^\top, -V_p V^\top - V V_p^\top\pig)\\
      &\qquad\quad: K_s \in \reals^{m \times s}, K_{sr} \in \reals^{s \times (r - s)}, V_p \in \reals^{n \times r}, V\transpose V_p = \zeros
        \Big\}.
    \end{aligned}
  \end{equation}
\end{lemma}
\begin{proof}
  Proposition~\ref{prop:submanifold-preimage} provides $\tangent_{(X, P)}
  \lift^{-1}(\reals_s^{m \times n}) = \big\{ (\dot X, \dot P) \in \tangent_{(X,
    P)} \desing : \dot X \in \tangent_X \reals_s^{m \times n} \big\}$.
  The right-hand side of~\eqref{eq:tgt-stratum} is included in this set: this
  follows from Proposition~\ref{prop:tangent-space} and~\eqref{eq:tgt-bounded}
  (with $D = \zeros$).
  Counting degrees of freedom and Proposition~\ref{prop:submanifold-preimage}
  give that the two spaces have the same dimension $(m + n - s)s + (n - r)(r -
  s)$, hence the equality.
\end{proof}

\begin{proposition}\label{prop:pl-stratum}
  Given $s \in \{0, \dots, r\}$, we let $\sfc_s$ and $\mfc_s$ be the
  restrictions of $\sfc$ and $\mfc$ to the respective rank $s$ strata
  $\reals_s^{m \times n}$ and $\lift^{-1}(\reals_s^{m \times n})$.
  If $\sfc_s$ is $\plconstant$-\eqref{eq:pl} around a local minimizer $X \in
  \reals_s^{m \times n}$ then $\mfc_s$ is $\frac{\sigma_s(X)^2}{2\alpha +
    \sigma_s(X)^2}\plconstant'$-\eqref{eq:pl} around $\lift^{-1}(X)$ for all
  $\plconstant' < \plconstant$.
\end{proposition}
\begin{proof}
  Let $\mathcal{U}$ be a neighborhood of $X$ in $\reals_s^{m \times n}$ where $\sfc_s$ satisfies~\eqref{eq:pl}, and define $\mathcal{V} = \lift^{-1}(\mathcal{U})$.
  Let $(Y, P) \in \mathcal{V}$ have representation $(U, \Sigma, V)$.
  Let also $U_s$ and $V_s$ hold the first $s$ columns of $U$ and $V$, and $V_{sr}$ hold the last $r - s$ columns of $V$.
  Proposition~\ref{prop:gradient} gives that $\grad \mfc(Y, P)$ has representation $\big(\big[\begin{matrix} K_s & K_{sr} \end{matrix}\big], V_p\big)$, where $K_s = \grad \sfc(Y) V_s$, $K_{sr} = \grad \sfc(Y) V_{sr}$ and $V_p = P \grad \sfc(Y)\transpose U \Sigma \sfactor(\alpha)^{-1}$.
  From the identity
  \begin{equation*}
    \grad \mfc_s(Y, P) = \proj_{\tangent_{(Y, P)} \lift^{-1}(\reals_s^{m \times n})} \grad \mfc(Y, P),
  \end{equation*}
  and from the expression of the tangent space $\tangent_{(Y, P)}\lift^{-1}(\reals_s^{m \times n})$ given in Lemma~\ref{lemma:tgt-stratum},
  we deduce that $\grad \mfc_s(Y, P)$ has representation $\big(\big[\begin{matrix} K_s & U_s^{} U_s\transpose K_{sr} \end{matrix}\big], V_p\big)$.
  It follows that for all $(Y, P) \in \mathcal{V}$, the squared norm of the gradient (Proposition~\ref{prop:inner-product}) satisfies
  \begin{align*}
    \|\grad \mfc_s(Y, P)\|^2 &= \|\grad \sfc(Y) V_s\|_\frob^2 + \|U_s^{} U_s\transpose \grad \sfc(Y) V_{sr}\|_\frob^2 + \|\Sigma \sfactor(\alpha)^{-1/2} U\transpose \grad \sfc(Y) P\|_\frob^2 \\
                             &\geq \sigma_s\big(\Sigma^2 \sfactor(\alpha)^{-1}\big) \Big( \|\grad \sfc(Y) V_s\|_\frob^2 + \|U_s^{} U_s\transpose \grad \sfc(Y) V_{sr}\|_\frob^2 + \|U_s^{} U_s\transpose \grad \sfc(Y) P\|_\frob^2 \Big) \\
                             &= \frac{\sigma_s(Y)^2}{2\alpha + \sigma_s(Y)^2} \|\grad \sfc_s(Y)\|_\frob^2.
  \end{align*}
  For the last term in the inequality, we used
  \begin{equation*}
    \|U\transpose \grad \sfc(Y) P\|_\frob = \|UU\transpose \grad \sfc(Y) P\|_\frob \geq \|U_s^{} U_s\transpose \grad \sfc(Y) P\|_\frob.
  \end{equation*}
  For the last equality, we used the identity
  \begin{equation*}
    \grad \sfc_s(Y) = \proj_{\tangent_Y \reals_s^{m \times n}} \grad \sfc(Y)
  \end{equation*}
  and the expression of the tangent space $\tangent_Y \reals_s^{m \times n}$ given in~\eqref{eq:tgt-bounded} (with $D = \zeros$, and using $P = I - VV^\top$).
  The \pl{} condition then follows from $\|\grad \sfc_s(Y)\|_\frob^2 \geq 2\plconstant \big(\sfc(Y) - \sfc(X)\big)$ and from continuity of the map $Y \mapsto \sigma_s(Y)$.
\end{proof}

It is informative to particularize Proposition~\ref{prop:pl-stratum} to $s = r$.
Indeed, the maximal-rank stratum of $\desing$ is open (see Remark~\ref{rmk:max-fiber-diffeomorphic}), so the restriction $\mfc_r$ coincides with $\mfc$ on $\lift^{-1}(\reals_r^{m \times n})$; in particular, their gradients (hence gradient norms) agree there.
Assuming that $\sfc_r$ satisfies~\eqref{eq:pl} around a point $X \in \reals_r^{m \times n}$, Proposition~\ref{prop:pl-stratum} implies that $\mfc_r$ satisfies~\eqref{eq:pl} around $\lift^{-1}(X)$, and therefore so does $\mfc$ on the same neighborhood.
However, this reasoning fails for lower-rank strata because they are not open, so we handle those next by a different argument.

\paragraph{Non-maximal rank strata.}

For lower-rank strata, we work with the \morsebott{} property, as defined in Section~\ref{subsec:local-conv-pl}.
We first prove a technical lemma and then state the main result of this section.

\begin{lemma}\label{lemma:l2-zero}
  Let $\optimalset$ be an embedded submanifold of $\reals_s^{m \times n}$.
  Let $(X, P) \in \lift^{-1}(\optimalset)$ and $(\dot X, \dot P) \in \tangent_{(X, P)} \desing$ have representations $(U, \Sigma, V)$ and $(K, V_\perp L)$.
  If $(\dot X, \dot P)$ is in $\normal_{(X, P)} \lift^{-1}(\optimalset)$ then $L = (\begin{matrix} L_1 & \zeros_{(n - r) \times (r - s)} \end{matrix})$ for some $L_1 \in \reals^{(n - r) \times s}$.
\end{lemma}
\begin{proof}
  We know from Proposition~\ref{prop:submanifold-preimage} that
  $\lift^{-1}(\optimalset)$ is an embedded submanifold of $\desing$.
  The inclusion $\lift^{-1}(X) \subseteq \lift^{-1}(\optimalset)$ implies that
  $\tangent_{(X, P)} \lift^{-1}(X) \subseteq \tangent_{(X, P)}
  \lift^{-1}(\optimalset)$.
  From Propositions~\ref{prop:tangent-space} and~\ref{prop:submanifold-preimage}
  we also find that
  \begin{align*}
    \tangent_{(X, P)} \lift^{-1}(X) = \big\{ (\zeros, -V_\perp \Gamma V^\top - V \Gamma\transpose V_\perp^\top) : \Gamma = (\begin{matrix} \zeros_{(n - r) \times s} & \Gamma_2 \end{matrix}), \Gamma_2 \in \reals^{(n - r) \times (r - s)} \big\}.
  \end{align*}
  It follows that the last $r - s$ columns of $L$ are indeed zero when $(\dot
  X, \dot P)$ is orthogonal to $\tangent_{(X, P)} \lift^{-1}(\optimalset)$.
\end{proof}

\begin{theorem}\label{th:pl-rank-defficient}
  Suppose $\sfc$ and $\mfc$ are $\smooth{2}$.
  Let $X \in \optimalset_\sfc$ be a local minimizer for~\eqref{eq:problem} of rank
  $s \leq r$.
  Suppose~\eqref{eq:mb-type} holds at $X$ with constant $\plconstant$.
  Then $\mfc$ satisfies~\eqref{eq:morse-bott} on the fiber $\lift^{-1}(X)$ with
  constant $\frac{\sigma_s(X)^2}{2\alpha + \sigma_s(X)^2} \plconstant$.
\end{theorem}
\begin{proof}
  Fix $P$ such that $(X, P)$ is in $\lift^{-1}(X)$ and let $(U, \Sigma, V)$ be a
  representation (Definition~\ref{def:pt-repr}) of $(X, P)$.
  Consider first the case $s=r$.
  Since the maximal-rank stratum $\reals^{m\times n}_r$ is an open subset of $\boundedrank$ (and a smooth embedded submanifold of $\reals^{m\times n}$), the condition~\eqref{eq:mb-type} coincides with the standard \morsebott{} property~\cite{rebjock2024fast}.
  Hence the case $s=r$ follows directly from Proposition~\ref{prop:pl-stratum} and the equivalence between \pl{} and \mb{} \cite[Cor.~2.17]{rebjock2024fast}.
  \TODOF{NB: About the first part of the claim though: Not quite.
    Proposition~\ref{prop:pl-stratum} loses a bit in the PL constant, so we
    would also lose a bit in the MB constant.
    Modify the theorem statement to have a $\plconstant' < \plconstant$?
    QR: We can take $\plconstant'$ as close to $\plconstant$ as desired.
    This makes the MB constant $\geq \plconstant$.}
  We now assume $s < r$.
  The gradient $\grad \sfc(X)$ is zero because $X$ is a stationary point of rank $s < r$~\cite[Cor.~3.4]{schneider2015convergence}.
  Proposition~\ref{prop:submanifold-preimage}, together with the definition of~\eqref{eq:mb-type}, implies that $\lift^{-1}(\optimalset_\sfc)$ is a submanifold locally around $(X, P)$ and
  \begin{align}\label{eq:tgt-fiber-upstairs}
    \tangent_{(X, P)} \lift^{-1}(\optimalset_\sfc) = \big\{ (\dot X, \dot P) \in \tangent_{(X, P)} \desing : \dot X \in \tangent_X \optimalset_\sfc \big\}.
  \end{align}
  We study the Hessian at that point to prove that $\mfc$
  satisfies~\eqref{eq:morse-bott}.
  Let $(\dot X, \dot P) \in \tangent_{(X, P)}\desing$ be in the normal space of
  $\lift^{-1}(\optimalset_\sfc)$ at $(X, P)$.
  Let $\dot X_p$ and $\dot X_t$ be the orthogonal projections (for the Frobenius
  inner product) of $\dot X$ onto $\normal_X \optimalset_\sfc$ and $\tangent_X
  \optimalset_\sfc$ respectively.
  The two following properties hold:
  \begin{itemize}
  \item $\dot X_t$ is in the kernel of $\hess \sfc(X)$.
    Indeed, decompose $\dot X = \dot X_p + \dot X_t$.
    Since $\optimalset_\sfc$ is $\smooth{1}$ there exists a $\smooth{1}$ curve
    $c \colon \reals \to \optimalset_\sfc$ such that $c(0) = X$ and $c'(0) =
    \dot X_t$.
    For all $t$ near zero we have $\nabla f(c(t)) = 0$ since each point in
    $\optimalset_\sfc$ near $X$ is a local minimizer of~\eqref{eq:problem} with
    rank $s < r$, owing to $\optimalset_\sfc$ being locally included in
    $\mathbb{R}^{m \times n}_s$.
    We deduce $0 = \left. \frac{\diff}{\diff t} \nabla f(c(t)) \right|_{t = 0} =
    \nabla^2 f(X)[\dot X_t]$.
  \item $\dot X_p$ is an element of $\tangent_X \boundedrank$.
    This is because $\dot X \in \tangent_X \boundedrank$ and $\dot X_t \in
    \tangent_X \reals_s^{m \times n}$.
    The special structure of $\tangent_X \boundedrank$ given
    in~\eqref{eq:tgt-bounded} ensures that the difference $\dot X_p = \dot X -
    \dot X_t$ is also in $\tangent_X \boundedrank$.
  \end{itemize}
  Corollary~\ref{cor:hess-inner-product}, the equality $\grad \sfc(X) = \zeros$, the two points above, and~\eqref{eq:mb-type} give
  \begin{align}\label{eq:inner-bigger-xp}
    \inner{(\dot X, \dot P)}{\hess \mfc(X, P)[\dot X, \dot P]} = \inner{\dot X}{\hess \sfc(X)[\dot X]} = \inner{\dot X_p}{\hess \sfc(X)[\dot X_p]} \geq \plconstant \|\dot X_p\|_\frob^2.
  \end{align}
  We now find a lower bound for $\|\dot X_p\|_\frob^2$ with respect to $\|(\dot X, \dot P)\|^2$.
  To do this, we first characterize $\normal_{(X, P)} \lift^{-1}(\optimalset_\sfc)$.
  Let $(K, V_\perp L)$ be a representation (Definition~\ref{def:tgt-repr}) of $(\dot X, \dot P)$.
  Lemma~\ref{lemma:l2-zero} gives that $L = (\begin{matrix} L_1 & \zeros_{(n - r) \times (r - s)} \end{matrix})$ for some $L_1 \in \reals^{(n - r) \times s}$.
  Let $\Sigma_s$ denote the top left $s \times s$ submatrix of $\Sigma$ (which is positive definite in particular).
  Owing to Proposition~\ref{prop:inner-product}, the inclusion $(\dot X, \dot P) \in \normal_{(X, P)} \lift^{-1}(\optimalset_\sfc)$ holds if and only if
  \begin{align}\label{eq:tgt-inner-zero}
    \inner{K_t}{K} + \inner{L_{t1}}{L_1 (2 \alpha I + \Sigma_s^2)} = 0
  \end{align}
  for all $(\dot X_t, \dot P_t) \in \tangent_{(X, P)} \lift^{-1}(\optimalset_\sfc)$ with representation $(K_t, L_t)$, and where $L_{t1} \in \reals^{(n - r) \times s}$ is composed of the first $s$ columns of $L_t$.

  Define the matrix $\dot X_n = K_n V^\top + U \Sigma L_n\transpose V_\perp^\top$, where $K_n = K$, $L_n = (\begin{matrix}L_{n1} & \zeros_{(n - r) \times (r - s)}\end{matrix})$, and $L_{n1} = L_1 \Sigma_s^{-2} (2 \alpha I + \Sigma_s^2)$.
  Then $\dot X_n$ satisfies (see Proposition~\ref{prop:inner-product})
  \begin{align}\label{eq:eqxn}
    \inner{\dot X}{\dot X_n} = \inner{K}{K_n} + \inner{L}{L_n \Sigma^2} = \|K\|_\frob^2 + \inner{L}{L \sfactor(\alpha)} = \|(\dot X, \dot P)\|^2,
  \end{align}
  and its norm is bounded as
  \begin{align}\label{eq:ineqxn}
    \|\dot X_n\|_\frob^2 = \|K\|_\frob^2 + \|L_1 \Sigma_s^{-1} (2 \alpha I + \Sigma_s^2)\|_\frob^2 \leq \frac{2 \alpha + \sigma_s(X)^2}{\sigma_s(X)^2} \|(\dot X, \dot P)\|^2.
  \end{align}
  We also find that $\dot X_n$ is in $\normal_X\optimalset_\sfc$.
  Indeed, given a tangent vector $X_t \in \tangent_X \optimalset_\sfc$ with representation $(K_t, L_t)$, we compute
  \begin{align*}
    \inner{\dot X_t}{\dot X_n} = \inner{K_t}{K_n} + \inner{L_{t1}}{L_{n1} \Sigma_s^2} = \inner{K_t}{K} + \inner{L_{t1}}{L_1 (2 \alpha I + \Sigma_s^2)} = 0,
  \end{align*}
  where the last equality comes from~\eqref{eq:tgt-inner-zero}.
  Since $\dot X_p$ is the projection of $\dot X$ onto $\normal_X \optimalset_\sfc$, we deduce that the norm of $\dot X_p$ is at least that of the projection of $\dot X$ onto $\{\lambda \dot X_n : \lambda \in \reals\}$.
  From this,~\eqref{eq:eqxn} and~\eqref{eq:ineqxn}, it follows that
  \begin{align}\label{eq:lower-bound-norm-xp}
    \|\dot X_p\|^2 \geq \frac{\inner{\dot X}{\dot X_n}^2}{\|\dot X_n\|_\frob^2} = \frac{\|(\dot X, \dot P)\|^4}{\|\dot X_n\|_\frob^2} \geq \frac{\sigma_s(X)^2}{2 \alpha + \sigma_s(X)^2} \|(\dot X, \dot P)\|^2.
  \end{align}
  Inequalities~\eqref{eq:inner-bigger-xp} and~\eqref{eq:lower-bound-norm-xp} together imply property~\eqref{eq:morse-bott}.
\end{proof}

Theorem~\ref{th:local} is then a direct consequence.

\begin{proof}[Proof of Theorem~\ref{th:local}]
  Theorem~\ref{th:pl-rank-defficient} gives the implication~\eqref{eq:mb-type} $\Rightarrow$~\eqref{eq:morse-bott}, and \mb{} yields \pl{}~\citep{rebjock2024fast}.
\end{proof}
When $\mfc$ satisfies \mb{} (and equivalently \pl{}), we can deduce local convergence results for standard algorithms, as described below Theorem~\ref{th:local}.

\begin{remark}
  Theorem~\ref{th:pl-rank-defficient} assumes that all the points of
  $\optimalset$ around $X$ have the same rank.
  At first sight, this restriction may appear unnecessarily strong.
  But without it we cannot guarantee that $\lift^{-1}(\optimalset)$ is (locally)
  a submanifold (see Remark~\ref{rmk:cst-rank}).
  Yet, this set needs to be a submanifold for \pl{} to hold~\cite[Thm.~2.16]{rebjock2024fast}.
\end{remark}

\begin{remark}\label{rmk:feasibility-of-mb-type}
  Our local theory encompasses all existing guarantees on the maximal-rank stratum.
  Beyond that, it can handle situations where the minimizer lies on a lower-rank stratum, which we discuss below.

  An important special case is when the minimizer of~\eqref{eq:problem} is isolated.
  This corresponds to~\eqref{eq:mb-type} where the set of minimizers $\optimalset_\sfc$ is a singleton.
  In this setting, we guarantee that \morsebott{} holds for~\eqref{eq:lifted-problem}.

  This situation does arise in practical applications---for example, in matrix sensing problems.
  Given $X^* \in \boundedrank$, a linear operator $\mathcal{A} \colon \reals^{m \times n} \to \reals^p$, and $y = \mathcal{A}[X^*]$, consider the objective $\sfc \colon X \mapsto \|\mathcal{A}[X] - y\|_\frob^2$ on $\boundedrank$.
  It satisfies~\eqref{eq:mb-type} when $\mathcal{A}^*\mathcal{A}$ is full rank.
  This occurs, for instance, when $\mathcal{A}$ extracts i.i.d.\ Gaussian measurements and $p$ is large enough.
  A sufficient condition is $p \ge mn$, which is atypical in applications, but numerical evidence shows that fewer measurements often suffice.
  This formulation also covers the matrix approximation problem.

  Our theory extends beyond the singleton case under a structural requirement on $\optimalset_\sfc$, as specified in the definition of~\eqref{eq:mb-type}.
  This requirement is restrictive and limits the range of applications.

  In particular, for matrix completion,~\eqref{eq:mb-type} generally fails at minimizers of non-maximal rank.
  This is because rank-1 matrices lie in the kernel of the mask operator (unless it is complete), so if a non-maximal-rank matrix fits the data perfectly, arbitrarily close higher-rank matrices can do so as well.
\end{remark}

\section{Numerical experiments}\label{sec:experiments}

In this section we run matrix completion experiments to compare the LR parameterization, fixed-rank optimization, and the desingularization (with three
metric parameters: $\alpha = 1/20$, $\alpha = 1/2$ and $\alpha = 5$).
Matrix completion does not satisfy the assumptions of Section~\ref{sec:conv-guarantees}---namely, compact sublevel sets and the \morsebott{} condition of Theorems~\ref{th:global} and~\ref{th:local} (see Remark~\ref{rmk:feasibility-of-mb-type}).
Nevertheless, it remains an important problem in low-rank optimization, which motivates its use here for illustration.

Given $A \in \reals^{m \times n}$ and a mask $\Omega \in \{0, 1\}^{m \times n}$, the cost function $\sfc \colon \reals^{m \times n} \to \reals$ is defined as $\sfc(X) = \frac{1}{2}\|(X - A) \odot \Omega\|_{\frob}^2$, where $\odot$ denotes the Hadamard product.
The (Euclidean) gradient and Hessian are $\grad \sfc(X) = (X - A) \odot \Omega$ and $\hess \sfc(X)[\dot X] = \dot X \odot \Omega$.
Both are sparse when $\Omega$ is sparse: this is exploited in the code.

We set the dimensions as $m = n = 5000$ in all the experiments.
The rank $r^*$ of $A$ is comparatively small (see details below).
We construct $A = U_A^{} \Sigma_A^{} V_A^\top$ for some matrix of singular
values $\Sigma_A \in \reals^{r^* \times r^*}$, and where $U_A \in \stiefel(m,
r^*)$ and $V_A \in \stiefel(n, r^*)$ are sampled uniformly at random.
The mask $\Omega$ is sparse with an oversampling factor of $5$, meaning that it
contains $5(m + n - r) r$ non-zero entries.
They are chosen uniformly at random.

We interface the implementation with the Matlab toolbox
Manopt~\citep{boumal2014manopt}.
For all geometries, we run a (Riemannian) trust-region
algorithm~\citep{absil2007trust} with the truncated conjugate gradient
subproblem solver.\footnote{We set the power for the residual stopping
  criterion~\citep[Eq.~(10)]{absil2007trust} as $\theta = \sqrt{2} - 1$ (instead
  of the usual default $\theta = 1$).
  Indeed, it appears that setting $\theta < 1$ helps convergence for all the
  methods.}
Evaluating the cost function, the gradient, and Hessian vector products takes
similar times for the three geometries.
For a given experiment, all the algorithms start from the same initial random
point in $\boundedrank$ (with balanced factors for the LR parameterization).
We pick the initialization close to zero (compared to $A$) because we found that
it significantly improves performance for all.
Specifically, we sample $U \in \stiefel(m, r), V \in \stiefel(n, r), \sigma_1,
\dots, \sigma_r \in \interval{0}{10^{-3}}$ all independently and uniformly at
random, and then set the initial point as $U \diag(\sigma_1, \dots, \sigma_r)
V^\top$.


For each experiment we report two figures.
Their vertical axes display the cost function value.
The horizontal axis of the first figure is the number of outer iterations of
trust-regions.
The horizontal axis of the second figure is the time in seconds.


\paragraph{Overestimation of the rank.}

We let the target matrix $A$ have rank $r^* = 10$ and optimize with $r = 20$.
The 10 non-zero singular values of $A$ are sampled uniformly at random in the
interval $\interval{\frac{1}{2}}{1}$.
Figure~\ref{fig:exp-1} reports the results.
\begin{figure}
  \centering
  \includegraphics[width=0.49\textwidth]{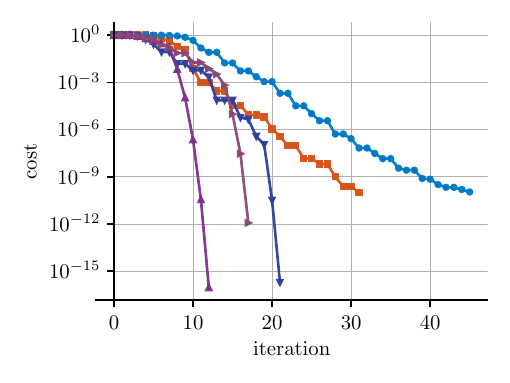}
  \hfill
  \includegraphics[width=0.49\textwidth]{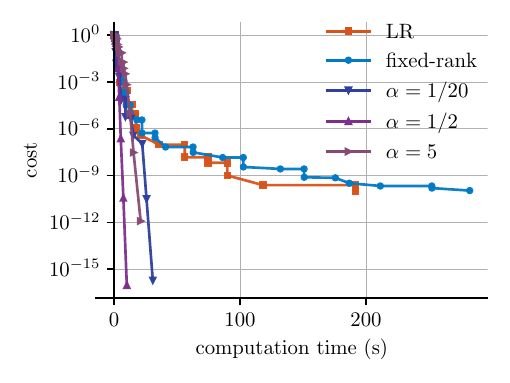}
  \caption{Rank overestimation $r > r^*$ and small condition number. Our geometry is tested with various choices of $\alpha$ for the Riemannian metric~\eqref{eq:inner-product}, compared to the $LR\transpose$ parameterization and optimization on the fixed-rank manifold.}
  \label{fig:exp-1}
\end{figure}
In this scenario the convergence is particularly fast for the desingularization.

\paragraph{Exponential decay.}

We let $A$ have rank $r^* = 20$ with singular values that follow an exponential decay as $\sigma_i(A) = 0.9^{i - 1}$ for $i = 1, \dots, r^*$.
Figure~\ref{fig:exp-2} displays the results with an optimization rank $r = r^* = 20$.
\begin{figure}[h]
  \centering
  \includegraphics[width=0.49\textwidth]{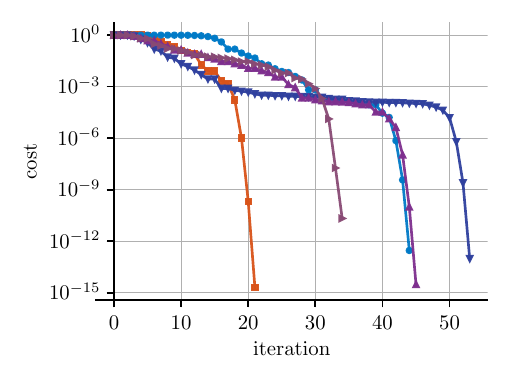}
  \hfill
  \includegraphics[width=0.49\textwidth]{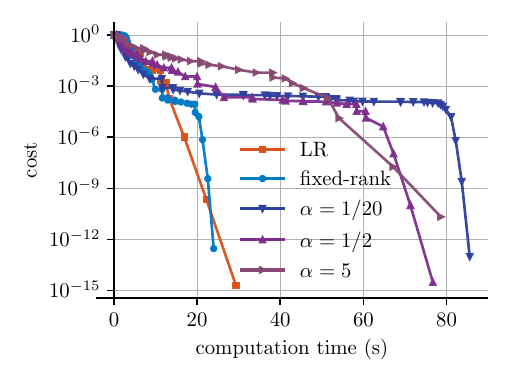}
  \caption{Exact rank $r = r^*$ and exponential decay of target singular values.}
  \label{fig:exp-2}
\end{figure}
In this setting both LR and fixed-rank optimization are faster than the desingularization.
Figure~\ref{fig:exp-3} reports the results for the exact same experiment but with an optimization rank of $r = 30 > r^*$.
\begin{figure}[h]
  \centering
  \includegraphics[width=0.49\textwidth]{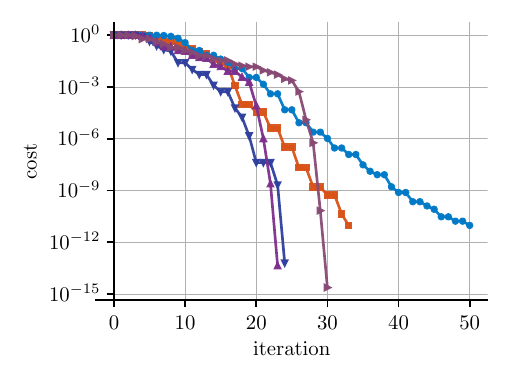}
  \hfill
  \includegraphics[width=0.49\textwidth]{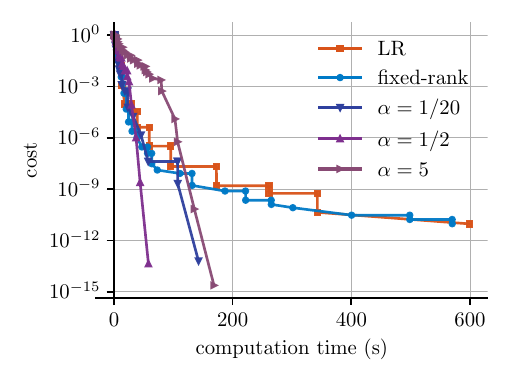}
  \caption{Rank overestimation $r > r^*$ and exponential decay of target singular values.}
  \label{fig:exp-3}
\end{figure}
This is detrimental for LR and fixed-rank optimization but the desingularization is robust to this rank overestimation.

\paragraph{Other solvers for matrix completion.}

This paper deals with general algorithms for bounded-rank optimization.
The numerical experiments compare general algorithms on a specific task, namely,
matrix completion.
There exist many solvers tailored for the latter.
For example, the algorithms proposed
in~\citep{kummerle2021scalable,bauch2021rank,zilber2022gnmr} exhibit
state-of-the-art performances in practice.
We experimented with GNMR~\citep{zilber2022gnmr} and empirically observed the
following.
This algorithm is particularly efficient to recover the matrix $A$, even in hard
settings where the oversampling factor is close to $1$, or when the condition
number is large.
However, iterations are expensive and we found that GNMR is slower than the
trust-region method.
We do not include these results for this reason.

\section{Perspectives}


We close with a few directions for future research.
\begin{itemize}
\item The desingularization is particularly relevant when the true (or
  numerical) rank $r^*$ of the target matrix is strictly less than the optimization
  rank $r$.
  This is especially convenient when $r^*$ is not known because it
  allows the user to set $r$ as a conservative upper bound of $r^*$.
  In that scenario, we may want to nudge the algorithm towards solutions of rank
  less than $r$.
  This could be achieved with a nuclear norm penalty~\citep{mishra2013low}, thus
  mixing both a hard and a soft low-rank prior.
\item Riemannian optimization theory breaks for algorithms running over
  $\reals_r^{m \times n}$ when the sequence converges to a point of rank
  strictly less than $r$ (because it lies outside of the manifold).
  Notwithstanding, even in that setting, we sometimes observe fast local
  convergence.
  \cite{luo2024tensor} explain that behavior for gradient descent on specific
  cost functions.
  In order to generalize such results, it may be possible to use the
  desingularization with the family of Riemannian metrics we propose, and to study algorithms in the limit where the metric
  parameter $\alpha$ goes to zero.
  Indeed, $\reals_r^{m \times n}$ (with its usual metric) is isometric to the maximal rank stratum of
  $\desing$ when $\alpha = 0$ (see Remark~\ref{rmk:max-fiber-isometric}).
\item Our local convergence results focus on the \polyakloja{} condition.
  They may generalize to the broader class of \loja{} inequalities.
  Especially, one might ask: does $\mfc$ satisfy a \loja{} inequality if $\sfc$
  satisfies an adequately modified version of that inequality for
  $\boundedrank$?
  A starting point could be~\citep{schneider2015convergence}.
\end{itemize}

\section*{Acknowledgments}

We thank Yuetian Luo and Anru Zhang for offering detailed pointers to relevant literature,
as well as Eitan Levin, Joe Kileel, Bart Vandereycken and Guillaume Olikier for numerous discussions.

\bibliographystyle{plainnat}
\bibliography{bibliography}

\appendix
\section{A note about regularizing the LR parameterization}\label{sec:regularizers}

It is common to regularize the LR parameterization.
The cost function becomes $\mfc(L, R) = \sfc(LR^\top) + \lambda \rho(L, R)$, where $\sfc$ is the function to minimize over $\boundedrank$, $\rho \colon \reals^{m \times r} \times \reals^{n \times r} \to \reals$ is the regularizer, and $\lambda > 0$ is the regularization intensity.

One possible choice is $\rho(L, R) = \|L\|_\frob^2 + \|R\|_\frob^2$.
This promotes solutions with small nuclear norm~\cite[Lem.~8]{srebro2004learning}.
While this is useful in several contexts, we do not entertain it here because it changes the minimizers of the cost function.

As introduced in~\citep{tu2016low}, the regularizer $\rho(L, R) = \|L^\top L - R^\top R\|_\frob^2$ promotes balanced factors.
It does \emph{not} change the global minimizers of the problem because the LR factors of a solution can always be balanced.
This regularizer has other nice properties, as described in~\citep{zhu2018global}.
However, in general the regularized cost $\mfc$ does not satisfy \pl{} at points of non-maximal rank.
This can happen even for simple costs such as
\begin{align*}
  (L, R) \mapsto \frac{1}{2}\|LR^\top - A\|_\frob^2 + \frac{\lambda}{4}\|L^\top L - R^\top R\|_\frob^2
\end{align*}
with $\rank A < r$.
For example, when $m = n = r = 1$ and $A = \zeros$, the cost becomes $(x, y) \mapsto \frac{1}{2}x^2 y^2 + \frac{\lambda}{4}(x^2 - y^2)^2$, and the Hessian is zero at $(0, 0)$.

\section{Second-order derivatives}\label{sec:appendix-second-order-derivatives}

The expression of the Hessian involves the derivative of the projector onto tangent spaces (see~\citep{absil2013extrinsic}, \citep[\S5.11]{boumal2020introduction}).
To find it we define four auxiliary functions:
\begin{equation}\label{eq:qs}
  \begin{aligned}
    &\qa(X) = (2\alpha I + XX^\top)^{-1}, && && \qb(X) = XX\transpose \qa(X) = I - 2 \alpha \qa(X),\\
    &\qc(X) = \qa(X)X, && && \qd(X) = (2\alpha I + X\transpose X)^{-1}.
  \end{aligned}
\end{equation}
With $X = U \Sigma V^\top$ an SVD, the definition of
$\sfactor(\alpha)$~\eqref{eq:sfactor} and direct computations give
\begin{equation}\label{eq:qs-usv}
  \begin{aligned}
    &\qa(X) = U \sfactor(\alpha)^{-1} U^\top + \frac{1}{2\alpha} U_\perp^{} U_\perp^\top, && && \qb(X) = U\Sigma^2 \sfactor(\alpha)^{-1} U^\top,\\
    &\qc(X) = U \sfactor(\alpha)^{-1} \Sigma V^\top, && && \qd(X) = V \sfactor(\alpha)^{-1} V^\top + \frac{1}{2\alpha} P.
  \end{aligned}
\end{equation}
Let $\sympart(A) = (A + A^\top)/2$ denote the symmetric part of a matrix $A$.
Using the functions defined in~\eqref{eq:qs}, we first express the orthogonal
projector (Proposition~\ref{prop:proj}) as a function of the point $(X, P)$
rather than its representation $(U, \Sigma, V)$.

\begin{lemma}\label{lemma:proj-x-p}
  Let $(X, P) \in \desing$ and $(Y, Z) \in \embeddingspace$.
  Then $(\dot X, \dot P) = \proj_{(X, P)}(Y, Z)$ satisfies
  \begin{align*}
    \dot X & = Y(I - P) + \qb(X)YP - 2 \alpha \qc(X) Z P, \\
    \dot P & = 2\sympart\!\Big(-PY\transpose \qc(X) + 2 \alpha P Z \qd(X) - P Z P\Big).
  \end{align*}
\end{lemma}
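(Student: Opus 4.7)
The plan is to take the tangent-vector representation of $\proj_{(X,P)}(Y,Z)$ already produced in Proposition~\ref{prop:proj} and translate it, via Definition~\ref{def:tgt-repr}, into the extrinsic $(X,P)$-language promised by the lemma. Concretely, Proposition~\ref{prop:proj} yields $K = YV$ and $V_p = P\pigl(Y^\top U\Sigma - 2\alpha Z V\pigr)\sfactor(\alpha)^{-1}$, and Definition~\ref{def:tgt-repr} gives
\begin{align*}
  \dot X = K V^\top + U\Sigma V_p^\top,
  &&
  \dot P = -V_p V^\top - V V_p^\top.
\end{align*}
The whole proof is then a matter of expanding these two expressions and recognizing the blocks $\qa,\qb,\qc,\qd$ via the identities in~\eqref{eq:qs-usv}.

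For $\dot X$, first note that $K V^\top = Y V V^\top = Y(I - P)$ since $P = I - VV^\top$. For the second piece $U\Sigma V_p^\top$, take the transpose of $V_p$ (using $Z^\top = Z$), commute the diagonal factors $\Sigma$ and $\sfactor(\alpha)^{-1}$, and obtain two terms of the form $U\Sigma^2\sfactor(\alpha)^{-1}U^\top\,YP$ and $-2\alpha\, U\Sigma\sfactor(\alpha)^{-1}V^\top\,ZP$. The first matrix is exactly $\qb(X)$ and the second is $\qc(X)$ by~\eqref{eq:qs-usv}, which yields the stated formula for $\dot X$.

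For $\dot P$, observe that $(V_p V^\top)^\top = V V_p^\top$, so $\dot P = -2\sympart(V_p V^\top)$. Expanding $V_p V^\top$ produces $P Y^\top U\Sigma\sfactor(\alpha)^{-1}V^\top - 2\alpha P Z V\sfactor(\alpha)^{-1}V^\top$. The first piece is $P Y^\top \qc(X)$ by~\eqref{eq:qs-usv}. For the second, the key identity to pull out of~\eqref{eq:qs-usv} is $V\sfactor(\alpha)^{-1}V^\top = \qd(X) - \tfrac{1}{2\alpha}P$, so that $-2\alpha P Z V\sfactor(\alpha)^{-1}V^\top = -2\alpha P Z\qd(X) + PZP$. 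Putting the two pieces together, negating, and applying $\sympart$ gives exactly the claimed expression for $\dot P$.

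The only real obstacle is bookkeeping: keeping track of transposes (crucially $Z = Z^\top$ because $Z \in \sym(n)$), exploiting that $\Sigma$ and $\sfactor(\alpha)$ commute as diagonal matrices, and correctly matching the intermediate products to the $\qa,\qb,\qc,\qd$ definitions in~\eqref{eq:qs-usv}. There is no geometric subtlety beyond what is already in Propositions~\ref{prop:tangent-space}--\ref{prop:proj}; the lemma is an algebraic rewriting meant to isolate the $(X,P)$-dependence from the particular choice of representation $(U,\Sigma,V)$, which will be convenient for computing the Hessian in Section~\ref{subsec:derivatives}.
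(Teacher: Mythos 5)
Your proposal is correct and follows exactly the paper's own route: substitute the representation from Proposition~\ref{prop:proj} into Definition~\ref{def:tgt-repr} and rewrite the resulting products using the identities~\eqref{eq:qs-usv}. The computations (including the identity $V\sfactor(\alpha)^{-1}V^\top = \qd(X) - \tfrac{1}{2\alpha}P$ and the use of $Z = Z^\top$) all check out.
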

\begin{proof}
  From Proposition~\ref{prop:proj} and the tangent vectors representation
  (Definition~\ref{def:tgt-repr}) we find
  \begin{equation}\label{eq:proj-fxp-intermediate}
    \begin{aligned}
      \dot X &= Y (I - P) + U \Sigma \sfactor(\alpha)^{-1} \big(\Sigma U\transpose Y - 2 \alpha V\transpose Z\big) P\\
      \text{and }
      \dot P &= -2\sympart\!\Big(P\big(Y\transpose U \Sigma - 2 \alpha Z V\big) \sfactor(\alpha)^{-1} V^\top\Big).
    \end{aligned}
  \end{equation}
  Now to express $(\dot X, \dot P)$ as a function of $(X, P)$ rather than $(U,
  \Sigma, V)$,
  plug~\eqref{eq:qs-usv} into~\eqref{eq:proj-fxp-intermediate}.
\end{proof}

We can now scrutinize the differential of the orthogonal projection.
Given $(X, P) \in \desing$ and $(\dot X, \dot P) \in \tangent_{(X, P)}\desing$,
we define $\projdiff \colon \embeddingspace \to \embeddingspace$ as
\begin{align}\label{eq:def-projdiff}
  \projdiff(Y, Z) = \D\!\left( (X, P) \mapsto \proj_{(X, P)}(Y, Z)\right)\!(X, P)[\dot X, \dot P].
\end{align}
We explicitly compute this object because it plays a role in the Hessian of
$\mfc$.

\begin{lemma}\label{lemma:projdiff}
  We have $\projdiff(Y, Z) = (\projdiff_1, \projdiff_2)$ where
  \begin{align*}
    \projdiff_1 &= -Y \dot P + \D \qb(X)[\dot X] Y P + \qb(X) Y \dot P - 2 \alpha \D \qc(X)[\dot X] Z P - 2 \alpha \qc(X) Z \dot P\\
    \projdiff_2 &= 2 \sympart\!\Big( -\dot P Y\transpose \qc(X) - P Y\transpose \D \qc(X)[\dot X] + 2 \alpha \dot P Z \qd(X)\\&\qquad\qquad\qquad+ 2 \alpha P Z \D \qd(X)[\dot X] - \dot P Z P - P Z \dot P \Big).
  \end{align*}
\end{lemma}
\begin{proof}
  This is a direct application of the chain rule to the expressions in
  Lemma~\ref{lemma:proj-x-p}.
\end{proof}

We are now ready to express the Riemannian Hessian $\hess \mfc$ as a function of $\grad \sfc$ and $\hess \sfc$.

\begin{proof}[Proof of Proposition~\ref{prop:hessian}]
  Since $\desing$ is a Riemannian submanifold of $\embeddingspace$,
  see for example \citep[(5.34)]{boumal2020introduction} to confirm that
  the Hessian satisfies $\hess \mfc(X, P)[\dot X, \dot P] = w_h + w_g$ with
  \begin{align*}
    w_h = \proj_{(X, P)} \! \big( \hess \sfc(X)[\dot X], \zeros \big) && \text{and} && w_g = \proj_{(X, P)} \! \big( \projdiff(\grad \sfc(X), \zeros) \big)
  \end{align*}
  where $\projdiff$ is defined in~\eqref{eq:def-projdiff}.
  We treat each term above separately.
  From Proposition~\ref{prop:proj} we find that $w_h$ has representation $(\bar
  K_h, \bar V_{p,h})$ given by
  \begin{align*}
    \bar K_h = \hess \sfc(X)[\dot X] V && \text{and} && \bar V_{p,h} = P \hess \sfc(X)[\dot X]\transpose U \Sigma \sfactor(\alpha)^{-1}.
  \end{align*}
  We now consider $w_g$.
  From Lemma~\ref{lemma:projdiff}, we find that $\projdiff(\grad \sfc(X),
  \zeros) = (Y, Z)$ where
  \begin{align*}
    Y &= -\grad \sfc(X) \dot P + \D \qb(X)[\dot X] \grad \sfc(X) P + \qb(X) \grad \sfc(X) \dot P\\
    \text{and } Z &= 2 \sympart\!\pig(-\dot P \grad \sfc(X)\transpose \qc(X) - P \grad \sfc(X)\transpose \D \qc(X)[\dot X]\pig).
  \end{align*}
  It follows from Proposition~\ref{prop:proj} that $w_g$ is a tangent vector
  with representation $(\bar K_g, \bar V_{p, g})$ where $\bar K_g = Y V$ and
  $\bar V_{p, g} = P \big(Y\transpose U \Sigma - 2 \alpha Z V \big)
  \sfactor(\alpha)^{-1}$.
  Using $\dot P V = -V_p$, we compute that
  \begin{align*}
    \bar K_g = \big(I - \qb(X)\big) \grad \sfc(X) V_p = M \grad \sfc(X) V_p
  \end{align*}
  because $I - \qb(X) = M$.
  We now compute $\bar V_{p, g}$.
  First notice that
  \begin{align*}
    P Z V = P \pig(-\dot P \grad \sfc(X)\transpose \qc(X) - P \grad \sfc(X)\transpose \D \qc(X)[\dot X]\pig) V
  \end{align*}
  because $P V = \zeros$ and $P \qc(X)^\top = \zeros$, see~\eqref{eq:qs-usv}.
  So we obtain $\bar V_{p, g} = P \pig( \dot P \grad \sfc(X)\transpose R + \grad
  \sfc(X)\transpose T \pig) \sfactor(\alpha)^{-1}$ where
  \begin{align*}
    R = (\qb(X) - I) U \Sigma + 2 \alpha \qc(X) V && \text{and} && T = \D \qb(X)[\dot X] U \Sigma + 2 \alpha \D \qc(X)[\dot X] V.
  \end{align*}
  From~\eqref{eq:sfactor} and~\eqref{eq:qs-usv} we find that $R = \zeros$.
  From~\eqref{eq:qs} we compute the following differentials:
  \begin{equation}\label{eq:dqs}
    \begin{aligned}
      &\D \qa(X)[\dot X] = -\qa(X)(\dot X X^\top + X \dot X^\top)\qa(X),\\
      &\D \qb(X)[\dot X] = -2 \alpha \D \qa(X)[\dot X],\\
      &\D \qc(X)[\dot X] = \D \qa(X)[\dot X] X + \qa(X) \dot X.
    \end{aligned}
  \end{equation}
  In particular $\D \qc(X)[\dot X] V = \D \qa(X)[\dot X] U \Sigma + \qa(X) K$.
  This implies $T = 2 \alpha \qa(X) K = M K$ because $2 \alpha \qa(X) = M$.
  We finally combine $\bar K = \bar K_h + \bar K_g$ and $\bar V_p = \bar V_{p,
    h} + \bar V_{p, g}$ to obtain the expression for the Hessian.
\end{proof}

\section{Second-order retractions}\label{sec:second-order-retractions}

The lemma below characterizes when the retractions given in
Proposition~\ref{prop:retr-iff} are second-order (see
Definition~\ref{def:second-order-retr}).

\begin{lemma}\label{lemma:cond-zero-acceleration}
  Let $(X, P) \in \desing$ and $(\dot X, \dot P) \in \tangent_{(X, P)}
  \desing$ be represented by $(U, \Sigma, V)$ and $(K, V_\perp L)$.
  Let $t \mapsto P(t)$ be a smooth curve on $\grassmann(n, n - r)$
  satisfying $P(0) = P$ and $P'(0) = \dot P$,
  and let $X(t) = (X + t\dot X)(I - P(t))$.
  Then $c(t) = (X(t), P(t))$ is a smooth curve on $\desing$
  and $c(0) = (X, P)$, $c'(0) = (\dot X, \dot P)$.
  Decompose the initial extrinsic acceleration of $P$ as
  \begin{align*}
    \acc{P}(0) = V A V^\top + V_\perp B V^\top + V B^\top V_\perp^\top + V_\perp C V_\perp^\top.
  \end{align*}
  Then $c$ has zero initial intrinsic acceleration ($\iacc{c}(0) = 0$) if and only if
  \begin{align*}
    \Sigma A = 2 \Sigma L^\top L && \text{and} && B = 2L K^\top U \Sigma \sfactor(\alpha)^{-1}.
  \end{align*}
\end{lemma}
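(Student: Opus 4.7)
The plan is to exploit the identity $\iacc{c}(0) = \proj_{\tangent_{c(0)}\desing}\!\acc{c}(0)$, which is true because $\desing$ is a Riemannian submanifold of $\embeddingspace$. Consequently, $\iacc{c}(0) = 0$ if and only if $\acc{c}(0) = (\acc{X}(0), \acc{P}(0))$ lies in the normal space $\normal_{(X, P)} \desing$ given by Proposition~\ref{prop:normal-space}. Before turning to this, I would briefly justify that $c$ is indeed a curve on $\desing$ with the stated initial conditions: since $P(t)$ is a projector, $X(t) P(t) = (X + t\dot X)(I - P(t)) P(t) = \zeros$, and the tangent-space formula in Proposition~\ref{prop:tangent-space} lets one verify $X'(0) = \dot X$ by a short computation (using $\dot X V V^\top = K V^\top$ and $X \dot P = -U\Sigma L^\top V_\perp^\top$).

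The main calculation is to expand $\acc{X}(0)$ using the formula $X(t) = (X + t\dot X)(I - P(t))$. Differentiating twice gives $X''(t) = -2\dot X P'(t) - (X + t\dot X)P''(t)$, so at $t = 0$,
\begin{align*}
  \acc{X}(0) = -2\dot X \dot P - X \acc{P}(0).
\end{align*}
I would then plug in the representations: for $\dot X\dot P$, use $\dot X = KV^\top + U\Sigma V_p^\top$ with $V_p = V_\perp L$ and $\dot P = -V_\perp L V^\top - V L^\top V_\perp^\top$, and observe that the cross products collapse via $V^\top V_\perp = 0$ and $V_\perp^\top V_\perp = I$ to yield $-2\dot X \dot P = 2KL^\top V_\perp^\top + 2U\Sigma L^\top L\, V^\top$. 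For the second piece, $X\acc{P}(0) = U\Sigma A\, V^\top + U\Sigma B^\top V_\perp^\top$ since $V^\top V_\perp = 0$. Combining,
\begin{align*}
  \acc{X}(0) = (2 U\Sigma L^\top L - U\Sigma A)V^\top + (2 K L^\top - U\Sigma B^\top)V_\perp^\top.
\end{align*}

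The final step is to match $\acc{c}(0)$ against the parameterization of the normal space from Proposition~\ref{prop:normal-space}, whose elements have $\acc{X}(0)$-component of the form $U\tilde A V_\perp^\top + U_\perp \tilde B V_\perp^\top$ (i.e., no $V^\top$ range) and $\acc{P}(0)$-component $V\tilde C V^\top + V_\perp \tilde D^\top V^\top + V\tilde D V_\perp^\top + V_\perp \tilde E V_\perp^\top$ with the coupling constraint $\Sigma \tilde A - 2\alpha \tilde D = 0$. The $V^\top$-component of $\acc{X}(0)$ must vanish, giving $\Sigma A = 2\Sigma L^\top L$. Reading off $\tilde D = B^\top$ from $\acc{P}(0)$ and decomposing $KL^\top = UU^\top K L^\top + U_\perp U_\perp^\top K L^\top$ in the $V_\perp^\top$-part of $\acc{X}(0)$ identifies $\tilde A = 2U^\top K L^\top - \Sigma B^\top$; substituting into the coupling equation gives $\sfactor(\alpha) B^\top = 2\Sigma U^\top K L^\top$, equivalently $B = 2 L K^\top U\Sigma\, \sfactor(\alpha)^{-1}$.

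The calculations themselves are elementary once the representations from Definitions~\ref{def:pt-repr} and~\ref{def:tgt-repr} and the normal space from Proposition~\ref{prop:normal-space} are invoked; the only place where one must be careful is bookkeeping of the four $V, V_\perp, U, U_\perp$ blocks when matching the $V_\perp^\top$-coefficient of $\acc{X}(0)$ to the normal space, since this is where both conditions of the lemma get encoded (one from the ``range'' constraint, one from the coupling $\Sigma \tilde A = 2\alpha \tilde D$). No residual freedom remains for $\tilde B$, which simply absorbs $2U_\perp^\top K L^\top$ without constraint, so exactly the two stated equations characterize $\iacc{c}(0) = 0$.
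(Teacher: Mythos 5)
Your proposal is correct and follows essentially the same route as the paper: both compute the extrinsic acceleration $\acc{X}(0) = -2\dot X\dot P - X\acc{P}(0)$ in the $(U, U_\perp, V, V_\perp)$ blocks and then characterize vanishing of the tangential part. The only (cosmetic) difference is the last step: the paper evaluates the projection formula of Proposition~\ref{prop:proj} and sets the representation $(\bar K, \bar V_p)$ to zero, whereas you test membership of $\acc{c}(0)$ in the normal space of Proposition~\ref{prop:normal-space} --- these are dual views of the same condition, and your block-matching (including the coupling $\Sigma\tilde A = 2\alpha\tilde D$ yielding $B = 2LK^\top U\Sigma\sfactor(\alpha)^{-1}$) checks out.
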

\begin{proof}
  The extrinsic acceleration of $X$ is
  $\acc{X}(t) = -2\dot X P'(t) - (X + t\dot X) \acc{P}(t)$
  so that, with $V_p = V_\perp L$,
  \begin{equation}\label{eq:fgdfgdfg}
    \begin{aligned}
      \acc{X}(0) &= 2 \big( K V^\top + U \Sigma V_p^\top \big) \big(V_p V^\top + V V_p^\top \big) - U \Sigma V^\top \acc{P}(0)\\
                 &= 2 K V_p^\top + 2 U \Sigma V_p^\top V_p V^\top - U \Sigma \big(A V^\top + B^\top V_\perp^\top\big).
    \end{aligned}
  \end{equation}
  The initial intrinsic acceleration is $\iacc{c}(0) = \proj_{(X,
    P)}(\acc{X}(0), \acc{P}(0))$~\cite[\S5.8]{boumal2020introduction}.
  Using Proposition~\ref{prop:proj}, this is a tangent vector whose
  representation $(\bar K, \bar V_p)$ satisfies
  \begin{align*}
    \bar K = \acc{X}(0) V && \text{and} && \bar V_p = P \Big(\acc{X}(0)^\top U \Sigma - 2\alpha \acc{P}(0) V \Big) \sfactor(\alpha)^{-1}.
  \end{align*}
  From~\eqref{eq:fgdfgdfg} we deduce that $\bar K = 2 U \Sigma V_p^\top V_p - U
  \Sigma A$.
  We also compute that
  \begin{align*}
    \bar V_p = V_\perp \pig( 2 L K^\top U \Sigma - B \Sigma^2 - 2 \alpha B \pig) \sfactor(\alpha)^{-1} = 2 V_\perp L K^\top U \Sigma \sfactor(\alpha)^{-1} - V_\perp B.
  \end{align*}
  The conditions $\bar K = \zeros$ and $\bar V_p = \zeros$ give the stated
  equalities.
\end{proof}

\begin{proof}[Proof of Proposition~\ref{prop:global-retr}]
  We let $Z(t) = V + tV_p + \frac{t^2}{2}Y$ where $Y = -2 V_pK^\top U \Sigma
  \sfactor(\alpha)^{-1}$.
  Define the curves $V \colon \reals \to \stiefel(n, r)$ and $P \colon \reals \to
  \grassmann(n, n - r)$ as
  \begin{align*}
    V(t) = Z(t)(Z(t)^\top Z(t))^{-1/2} && \text{and} && P(t) = I - V(t)V(t)^\top.
  \end{align*}
  Then $c(t) = \big((X + t\dot X)(I - P(t)), P(t)\big)$ is a smooth curve on
  $\desing$ with $c(0) = (X, P)$.
  Proposition~\ref{prop:retr-iff} readily gives that $c'(0) = (\dot X, \dot P)$
  so $\retr$ is a retraction.
  We now prove that it satisfies $\iacc{c}(0) = 0$ (zero initial intrinsic
  acceleration) to confirm that $\retr$ is a second-order retraction.

  For all $t$ the matrix $Z(t)$ has rank $r$ so $M(t) = (Z(t)^\top Z(t))^{-1/2}$
  is well defined and satisfies $V(t) = Z(t)M(t)$.
  Differentiating the equality $M(t)^2 = (Z(t)^\top Z(t))^{-1}$ gives
  \begin{align}\label{eq:diffq}
    M'(t)M(t) + M(t)M'(t) = -M(t)^2\pig( Z'(t)^\top Z(t) + Z(t)^\top Z'(t) \pig) M(t)^2.
  \end{align}
  We deduce that $M'(0) = \zeros$ because $M(0) = I$ and $Z'(0)^\top Z(0) =
  \zeros$.
  Now notice that $\acc{Z}(0)^\top Z(0) = Y^\top V = \zeros$.
  So differentiating~\eqref{eq:diffq} again and evaluating at $t = 0$ yields
  \begin{align*}
    \acc{M}(0) = -Z'(0)^\top Z'(0) = - V_p^\top V_p.
  \end{align*}
  We can now evaluate the derivatives of $c$.
  We find that
  \begin{align*}
    V'(t) = (V_p + tY)M(t) + Z(t)M'(t) && \text{and} && P'(t) = -V'(t)V(t)^\top - V(t)V'(t)^\top.
  \end{align*}
  In particular $V'(0) = V_p$.
  Additionally we have $\acc{V}(0) = Y + V \acc{M}(0)$ and
  \begin{align*}
    \acc{P}(0) &= -\acc{V}(0) V^\top - 2V'(0)V'(0)^\top - V \acc{V}(0)^\top\\
               &= - 2 V \acc{M}(0) V^\top -Y V^\top - V Y^\top - 2 V_p V_p^\top.
  \end{align*}
  From Lemma~\ref{lemma:cond-zero-acceleration} we deduce that $c$ has
  zero initial intrinsic acceleration if and only if $2 \Sigma \acc{M}(0) = -2
  \Sigma V_p^\top V_p$ and $Y = -2V_pK^\top U \Sigma \sfactor(\alpha)^{-1}$.
  Both these conditions hold because we computed above that $\acc{M}(0) =
  -V_p^\top V_p^{}$ and because of the definition of $Y$.
\end{proof}

\end{document}